\DeclareMathOperator{\Aut}{Aut}
\DeclareMathOperator{\ch}{ch}
\DeclareMathOperator{\Char}{char}
\DeclareMathOperator{\End}{End}
\DeclareMathOperator{\ev}{ev}
\DeclareMathOperator{\GL}{GL}
\DeclareMathOperator{\Id}{Id}
\DeclareMathOperator{\Par}{Par}
\DeclareMathOperator{\Span}{span}
\DeclareMathOperator{\SpecR}{Spec_{R}}
\DeclareMathOperator{\supp}{supp}
\newcommand*{\B}{\mathcal{B}}
\newcommand*{\bb}{\mathfrak{b}}
\newcommand*{\C}{\mathbb{C}}
\newcommand*{\CC}{\mathcal{C}}
\newcommand*{\CCinf}[1]{\CC_{#1}^{\infty}}
\newcommand*{\cf}{cf.\;}
\newcommand*{\Cf}{Cf.\;}
\newcommand*{\eg}{e.g.\;}
\newcommand*{\ff}{\mathfrak{f}}
\newcommand*{\ffsupport}[1]{\pp_{#1}}
\newcommand*{\fflength}[2]{\bb_{#1, #2}}
\newcommand*{\fgtf}{finitely generated torsion-free }
\newcommand*{\g}{\mathfrak{g}}
\newcommand*{\HH}{\mathcal{H}}
\newcommand*{\HHinf}[1]{\HH_{#1}^{\infty}}
\newcommand*{\ie}{i.e.\;}
\newcommand*{\inftynorm}[1]{|{#1}|_{\infty}}
\newcommand*{\inftynormb}[1]{\left|{#1}\right|_{\infty}}
\newcommand*{\inv}[1]{{#1}^{-1}}
\newcommand*{\M}{\mathcal{M}}
\newcommand*{\m}{\mathfrak{m}}
\newcommand*{\mrc}{\m_{r, c}}
\newcommand*{\N}{\mathbb{N}}
\newcommand*{\nn}{\mathfrak{n}}
\newcommand*{\nrc}{\nn_{r, c}}
\newcommand*{\polZ}[1]{\Z[x_{1}, \ldots, x_{#1}]}
\newcommand*{\powZ}[1]{\Z[[x_{1}, \ldots, x_{#1}]]}
\newcommand*{\pp}{\mathfrak{p}}
\newcommand*{\restr}[2]{{#1}|_{#2}}
\newcommand*{\Rinf}{R_{\infty}}
\newcommand*{\Sym}[1]{\mathrm{Sym}(#1)}
\newcommand*{\sympolnZ}{\polZ{n}^{\Sym{n}}}
\newcommand*{\sympolrZ}{\polZ{r}^{\Sym{r}}}
\newcommand*{\sympownZ}{\powZ{n}^{\Sym{n}}}
\newcommand*{\powinfZ}{\Z[[x_{1}, \ldots, x_{n}, \ldots]]}
\newcommand*{\Z}{\mathbb{Z}}
\newcommand*{\Znonneg}{\Z_{\geq 0}}
\newcommand*{\Zpos}{\Z_{>0}}
\renewcommand{\phi}{\varphi}
\let\originalleft\left
\let\originalright\right
\renewcommand{\left}{\mathopen{}\mathclose\bgroup\originalleft}
\renewcommand{\right}{\aftergroup\egroup\originalright}
\declaretheorem[style=definition, name = Definition, numberwithin=section]{defin}
\declaretheorem[name = Theorem, sibling=defin]{theorem}
\declaretheorem[name = Lemma, sibling=defin]{lemma}
\declaretheorem[name = Proposition, sibling=defin]{prop}
\declaretheorem[name = Corollary, sibling=defin]{cor}
\declaretheorem[style=remark, name = Remark, numbered = no]{remark}
\declaretheorem[name = Conjecture, numbered = no]{conjecture}
\declaretheorem[name = Theorem, numbered = no]{theorem*}
\declaretheorem[name = Objective, sibling=defin]{objective}
\numberwithin{equation}{section}
\crefname{objective}{Objective}{Objectives}
\crefname{prop}{Proposition}{Propositions}
\crefname{cor}{Corollary}{Corollaries}
\newcommand{\drawyoungdiagram}[1]{
	\begin{tikzpicture}[scale=0.5]
		\foreach \i [count=\j] in {#1}{
			\draw (0, -\j) grid ($({\i}, {-\j + 1})$);
		}
	\end{tikzpicture}
}
\title{Reidemeister spectra of free nilpotent groups and plethysms of Schur functions}
\author{Pieter Senden\footnote{E-mail: \url{pieter.senden@telenet.be}, ORCID: 0000-0002-3107-6775}}
\date{}
\begin{document}
\maketitle
\begin{abstract}
	We establish a strong link between two open problems: determining the Reidemeister spectrum of free nilpotent groups and determining the coefficients in the Schur expansion of plethysms of Schur functions.
	Specifically, we show that the expressions occurring in the computations for the Reidemeister spectrum are sums of plethysms of the form \(s_{1^{i}}[g]\), where \(g\) is a Schur function or a Schur positive function.
\end{abstract}
\let\thefootnote\relax\footnote{2020 \emph{Mathematics Subjects Classification}. Primary: 20E36, 05E05}
\let\thefootnote\relax\footnote{\emph{Keywords and phrases.} Free nilpotent groups, twisted conjugacy, Reidemeister number, Reidemeister spectrum, symmetric functions, plethysms, Schur functions}

\section{Introduction}

Determining the Reidemeister spectra of free nilpotent groups and determining the coefficients in the Schur expansion of the plethysms of Schur functions are both well known and well studied open problems in their respective fields.
For both problems, significant progress has been made, but complete solutions still remain unknown.
In this article, we establish a strong link between both problems, in the sense that a significant step towards fully solving the problem of the Reidemeister spectra can be formulated as a problem of plethysms.
To establish this link, we travel from group theory to combinatorics, starting from Reidemeister spectra and ending at plethysms, passing through the realms of Lie algebras and symmetric polynomials along the way.

As this journey passes through several quite different mathematical fields, we want everything to be comprehensible for both those with little knowledge of Reidemeister numbers and for those with little knowledge of plethysms of Schur functions.
This means that at several points, we will explain ourselves more thoroughly than perhaps needed in the eyes of an expert in the field.

\subsection{Starting point: Reidemeister spectra of free nilpotent groups}
Let \(G\) be a group and \(\phi \in \End(G)\).
We say that \(x, y \in G\) are \emph{\(\phi\)-conjugate} if \(x = zy\inv{\phi(z)}\) for some \(z \in G\).
This defines an equivalence relation on \(G\) and the number of equivalence classes is called the \emph{Reidemeister number} of \(\phi\), which is denoted by \(R(\phi)\).
When \(\phi\) is not specified, we also speak of \emph{twisted conjugacy}.
Finally, the \emph{Reidemeister spectrum} of \(G\) is the set \(\SpecR(G) := \{R(\psi) \mid \psi \in \Aut(G)\}\).
If \(\SpecR(G) = \{\infty\}\), we say that \(G\) has the \emph{\(\Rinf\)-property}; if \(\SpecR(G) = \N_{0} \cup \{\infty\}\), we say that \(G\) has \emph{full Reidemeister spectrum}.

The concept of twisted conjugacy arises from algebraic topology, more specifically from Nielsen fixed-point theory, where its topological analog serves as an estimate for the number of fixed points of a continuous self map \cite{Jiang83}.
However, twisted conjugacy also occurs in representation theory \cite{Springer06}, Galois cohomology \cite{Berhuy10}, and the Arthur-Selberg trace formula \cite{Shokranian92}.

The last two decades, there has been strong algebraic interest in twisted conjugacy, especially in the \(\Rinf\)-property and in Reidemeister spectra of nilpotent groups.
For instance, nilpotent quotients of both surface groups \cite{DekimpeGoncalves16} and Baumslag-Solitar groups \cite{DekimpeGoncalves20}, and several \(2\)-step nilpotent quotients of right-angled Artin groups \cite{DekimpeLathouwers23} all have been studied.

There is particular interest in the free nilpotent groups.
Let \(r \geq 2\) and \(c \geq 1\) be integers.
Recall that the \emph{free \(c\)-step nilpotent group of rank \(r\)} is the group
\[
	N_{r, c} := \frac{F(r)}{\gamma_{c + 1}(F(r))},
\]
where \(F(r)\) is the free group of rank \(r\) and \(\gamma_{1}(G) := G, \gamma_{i + 1}(G) := [\gamma_{i}(G), G]\) for any group \(G\) and integer \(i \geq 1\).
For \(c = 1\), we obtain the free abelian groups and it is well known \cite[\S~3]{Romankov11} that
\[
	\SpecR(\Z^{r}) = \begin{cases}
 		\{2, \infty\}	&	\mbox{if } r = 1,	\\
 		\N_{0} \cup \{\infty\}	&	\mbox{otherwise.}
 \end{cases}
\]
The Reidemeister spectrum of (non-abelian) free nilpotent groups has been studied several times: in 2009, D.\;Gon\c{c}alves and P.\;Wong showed that \(N_{2, c}\) has the \(\Rinf\)-property for \(c \geq 9\) and that \(N_{r, 2}\) never has the \(\Rinf\)-property \cite{GoncalvesWong09}; in 2011, V.\;Roman'kov generalised the former by showing that \(N_{r, c}\) has the \(\Rinf\)-property if \(r \in \{2, 3\}\) and \(c \geq 4r\), and if \(r \geq 4\) and \(c \geq 2r\) \cite{Romankov11}.
Finally, in 2014, K.\;Dekimpe and D.\;Gon\c{c}alves classified all free nilpotent groups with the \(\Rinf\)-property:

\begin{theorem}[{\cite[Theorem~2.1]{DekimpeGoncalves14}}]	\label{theo:CharacterisationFreeNilpotentRinf}
	Let \(r \geq 2\) and \(c \geq 1\) be integers.
Then \(N_{r, c}\) has the \(\Rinf\)-property if and only if \(c \geq 2r\).
\end{theorem}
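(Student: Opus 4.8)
The plan is to translate the statement into a question about the action of $\GL_{r}(\Z)$ on the free nilpotent Lie algebra, and then settle that question by a short symmetric‑function computation on one side and an explicit matrix on the other. The starting point is the standard finiteness criterion for twisted conjugacy in finitely generated torsion‑free nilpotent groups: for such a group $N$ and $\phi\in\Aut(N)$ one has $R(\phi)<\infty$ if and only if $1$ is not an eigenvalue of the induced map on $(\gamma_{i}(N)/\gamma_{i+1}(N))\otimes\mathbb{Q}$ for any $i$ (equivalently, the induced automorphism of the rational Mal'cev Lie algebra minus the identity is invertible). For $N=N_{r,c}$ the associated graded Lie ring with respect to the lower central series is, by the Magnus--Witt theorem, the free nilpotent Lie ring of class $c$ on $r$ generators, and an automorphism of $N_{r,c}$ induces a graded Lie ring automorphism of it whose degree‑$1$ part is an element $A\in\GL_{r}(\Z)$; conversely, every $A\in\GL_{r}(\Z)$ arises this way since $N_{r,c}$ is relatively free. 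Because the bracket is $\GL_{r}$‑equivariant and the free Lie ring is generated in degree $1$, the degree‑$i$ part of this automorphism is the endomorphism $A_{i}$ that $A$ induces on the degree‑$i$ component $\g_{i}$ of the free Lie algebra, whose character as a $\GL_{r}$‑representation is $\mathrm{Lie}_{i}=\frac1i\sum_{d\mid i}\mu(d)\,p_{d}^{i/d}$ ($\mu$ the M\"obius function). Hence $N_{r,c}$ has the $\Rinf$‑property if and only if for every $A\in\GL_{r}(\Z)$ there is some $i\le c$ with $1\in\mathrm{Spec}(A_{i})$.

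For the \emph{if} direction I would show that $1\in\mathrm{Spec}(A_{2r})$ for \emph{every} $A\in\GL_{r}(\Z)$; since $\g_{2r}$ is a genuine lower central layer of $N_{r,c}$ once $c\ge 2r$, this forces $R(\phi)=\infty$ for all $\phi\in\Aut(N_{r,c})$. The point is that $(\det A)^{2}=1$, so it is enough to show that the $\GL_{r}$‑weight $(2,2,\dots,2)$ occurs in $\g_{2r}$ with positive multiplicity, \ie that the coefficient of $x_{1}^{2}\cdots x_{r}^{2}$ in $\mathrm{Lie}_{2r}$ is positive. In $\mathrm{Lie}_{2r}=\frac{1}{2r}\sum_{d\mid 2r}\mu(d)\,p_{d}^{2r/d}$ only the divisors $d=1$ and $d=2$ contribute to that monomial, and the coefficient comes out to be $\tfrac{1}{2r}\bigl((2r)!/2^{r}-r!\bigr)=\tfrac12 (r-1)!\,\bigl((2r-1)!!-1\bigr)$, which is at least $1$ for all $r\ge 2$. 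Thus $\g_{2r}$ contains a vector of weight $(2,\dots,2)$, on which a diagonalisable $A$ acts by $(\det A)^{2}=1$; the general case is reduced to the diagonalisable one by passing to the semisimple part of $A$, since the functor $V\mapsto\g_{2r}(V)$ is polynomial and so respects the Jordan decomposition.

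For the \emph{only if} direction (so $c\le 2r-1$) it suffices to exhibit one $A\in\GL_{r}(\Z)$ with $1\notin\mathrm{Spec}(A_{i})$ for all $1\le i\le 2r-1$: then any automorphism of $N_{r,c}$ lifting $A$ has finite Reidemeister number, so $\SpecR(N_{r,c})\ne\{\infty\}$. I would take $A$ to be the companion matrix of $f(x)=x^{r}-x-1$ when $r$ is even and of $f(x)=x^{r}-x+1$ when $r$ is odd; in both cases $f$ is monic and irreducible with Galois group the full symmetric group $S_{r}$, the companion matrix lies in $\GL_{r}(\Z)$ with determinant $-1$, and $f$ has a real root off the unit circle, so is not cyclotomic. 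Then $f$ is separable, $A$ is diagonalisable with eigenvalues $\lambda_{1},\dots,\lambda_{r}$ forming one Galois orbit, and $1\in\mathrm{Spec}(A_{i})$ precisely when some weight of $\g_{i}$ --- a vector $a\in\Znonneg^{r}$ with $\sum_{j}a_{j}=i$ --- lies in the multiplicative relation lattice $\Lambda=\{a\in\Z^{r}\mid \prod_{j}\lambda_{j}^{a_{j}}=1\}$. This $\Lambda$ is stable under the coordinate‑permutation action of $S_{r}$; it contains $(2,\dots,2)$ because $\prod_{j}\lambda_{j}^{2}=(\det A)^{2}=1$, but not $(1,\dots,1)$ because $\prod_{j}\lambda_{j}=\det A=-1$. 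Since $\mathbb{Q}^{r}$ is the direct sum of the trivial line $\mathbb{Q}\,(1,\dots,1)$ and the standard $S_{r}$‑module (irreducible over $\mathbb{Q}$), and $\Lambda\otimes\mathbb{Q}$ is a nonzero $S_{r}$‑submodule containing $(2,\dots,2)$ that cannot equal all of $\mathbb{Q}^{r}$ (otherwise every $\lambda_{j}$ would be a root of unity and $f$ cyclotomic), we get $\Lambda\otimes\mathbb{Q}=\mathbb{Q}\,(1,\dots,1)$ and hence $\Lambda=\Z\,(2,\dots,2)$. Therefore every nonzero $a\in\Lambda$ with nonnegative entries has $\sum_{j}a_{j}\ge 2r>2r-1$, so no weight of $\g_{i}$ with $i\le 2r-1$ lies in $\Lambda$, which is exactly what we needed.

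The hard part is the \emph{only if} direction: one has to control the multiplicative relations among the eigenvalues of the witness matrix uniformly for all $i\le 2r-1$, and the argument above reduces this to the single external input that $x^{r}-x-1$ (and its sign variant) has Galois group $S_{r}$ --- itself a classical but nontrivial fact. Once the reduction to $\GL_{r}(\Z)$ acting on the free Lie algebra is in place, the \emph{if} direction is essentially the one‑line positivity computation for the coefficient of $x_{1}^{2}\cdots x_{r}^{2}$ in $\mathrm{Lie}_{2r}$; the only further care needed there is the passage from arbitrary integer matrices to diagonalisable ones, handled by the Jordan decomposition of polynomial functors.
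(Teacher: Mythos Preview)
The paper does not prove this theorem; it merely quotes it from \cite{DekimpeGoncalves14}, so there is no in-paper argument to compare your proposal against. Your task was therefore to reconstruct a proof of a cited result, not of something argued in the present article.

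That said, your sketch is essentially a correct proof and is close in spirit to the original Dekimpe--Gon\c{c}alves argument. The reduction to the action of $\GL_{r}(\Z)$ on the graded pieces of the free Lie algebra is exactly the mechanism the present paper sets up in \cref{sec:FromReidemeisterToPolynomials} (see \cref{lem:eigenvaluesphii}), and your ``if'' direction---finding the weight $(2,\dots,2)$ in $\g_{2r}$ with positive multiplicity so that $(\det A)^{2}=1$ is always an eigenvalue---is the standard one; the passage to non-diagonalisable $A$ via the semisimple part is precisely what the paper invokes (the reference to \cite[Corollary~2, p.~135]{Segal83} in the proof of \cref{lem:eigenvaluesphii}). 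Your ``only if'' direction is a clean lattice-of-relations argument that leans on the single external fact that $x^{r}-x-1$ has Galois group $S_{r}$ over $\mathbb{Q}$ (Selmer for irreducibility, Osada for the full symmetric group); once that is granted, the $S_{r}$-module decomposition of $\mathbb{Q}^{r}$ pins down $\Lambda=\Z\,(2,\dots,2)$ immediately. Two small points worth making explicit: your observation that for odd $r$ the roots of $x^{r}-x+1$ are the negatives of those of $x^{r}-x-1$ is what transfers the $S_{r}$-Galois-group result to the odd case; and your exclusion of $\Lambda\otimes\mathbb{Q}=\mathbb{Q}^{r}$ via a root off the unit circle is the only place non-cyclotomicity is used, and it is easily checked (there is a real root in $(1,2)$ for $x^{r}-x-1$).

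In short: nothing to compare to in this paper, but your proposal stands on its own, with the Galois-group input being the one genuinely nontrivial black box.
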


With the \(\Rinf\)-question answered, the focus has shifted to determining the Reidemeister spectrum in the remaining cases:
\begin{objective}
	Let \(r \geq 2\) and \(c \geq 1\) be integers.
Determine the Reidemeister spectrum of \(N_{r, c}\) if \(c < 2r\).
\end{objective}

Earlier, we showed that the case \(c = 1\) is fully done.
V.\;Roman'kov \cite{Romankov11} already determined that
\begin{align*}
	&\SpecR(N_{2, 2}) = 2\N_{0} \cup \{\infty\},	\\
	&\SpecR(N_{2, 3}) = \{2k^{2} \mid k \in \N_{0}\} \cup \{\infty\}	\quad \text{and} \\
	&\SpecR(N_{3, 2}) = \{2k - 1 \mid k \in \N_{0}\} \cup 4 \N_{0} \cup \{\infty\}.
\end{align*}
K.\;Dekimpe, S.\;Tertooy and A.\;Vargas completely solved the case \(c = 2\) and showed that \(N_{r, 2}\) has full Reidemeister spectrum if and only if \(r \geq 4\) \cite{DekimpeTertooyVargas20}.
They also show that \(\SpecR(N_{r, c})\) is never full if \(c \geq r\) and conjecture the following:
\begin{conjecture}[{\cite[Conjecture~5.2]{DekimpeTertooyVargas20}}]
	Let \(r \geq 2\) and \(c \geq 3\) be integers.
Then the Reidemeister spectrum of \(N_{r, c}\) is not full.
\end{conjecture}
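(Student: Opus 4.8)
The plan is to transport the problem into the free nilpotent Lie ring and from there into symmetric functions, and then to hunt for an arithmetic obstruction. First I would recall the standard reduction underlying all the cited work: the projection \(\Aut(N_{r,c})\to\GL_{r}(\Z)\) onto the action on the abelianisation is surjective, \(R(\phi)\) depends only on the induced matrix \(A\in\GL_{r}(\Z)\), and, writing \(L^{(i)}:=\gamma_{i}(N_{r,c})/\gamma_{i+1}(N_{r,c})\) for the graded pieces of the associated Lie ring and \(\bar{\phi}_{i}\) for the endomorphism of \(L^{(i)}\) that \(A\) induces by functoriality of the free Lie ring,
\[
	R(\phi)=\prod_{i=1}^{c}\bigl|\det(\Id-\bar{\phi}_{i})\bigr|
\]
whenever no \(\bar{\phi}_{i}\) has \(1\) as an eigenvalue, and \(R(\phi)=\infty\) otherwise. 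Thus \(\SpecR(N_{r,c})\setminus\{\infty\}\) is exactly the set of these finite products, and the conjecture says this set misses a positive integer.

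Second, I would bring in the symmetric-function picture that is the point of this paper. Let \(\alpha_{1},\dots,\alpha_{r}\) be the eigenvalues of \(A\) and let \(\ell_{i}\) be the (Schur-positive) Lie character of \(L^{(i)}\), so the eigenvalues of \(\bar{\phi}_{i}\) are the monomials of \(\ell_{i}(\alpha_{1},\dots,\alpha_{r})\). Then
\[
	\det(\Id-\bar{\phi}_{i})=\sum_{k\ge 0}(-1)^{k}e_{k}\bigl(\text{eigenvalues of }\bar{\phi}_{i}\bigr)=\sum_{k\ge 0}(-1)^{k}\bigl(s_{1^{k}}[\ell_{i}]\bigr)(\alpha_{1},\dots,\alpha_{r}),
\]
so \(R(\phi)\) is the absolute value of a product, over \(1\le i\le c\), of signed sums of plethysms \(s_{1^{k}}[\ell_{i}]\) with \(\ell_{i}\) Schur positive, evaluated at the eigenvalues of an integer matrix — exactly the shape advertised in the abstract. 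The conjecture now reads: this family of integers is a proper subset of \(\N\).

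Third, for the obstruction I would start from a divisibility that needs no plethysm computation. The weight \((1^{i},0,\dots,0)\) and its permutations occur in \(L^{(i)}\) with multiplicity \(\dim\mathrm{Lie}(i)=(i-1)!\), the dimension of the multilinear component of the free Lie algebra; hence \(\det(\Id-\Lambda^{i}A)^{(i-1)!}\) divides \(\det(\Id-\bar{\phi}_{i})\), and so either \(R(\phi)=\infty\) or
\[
	\prod_{i=1}^{c}\det(\Id-\Lambda^{i}A)^{(i-1)!}\ \bigm|\ R(\phi).
\]
When \(c\ge r\) this already forces \(R(\phi)=\infty\) or \(2^{(r-1)!}\mid R(\phi)\), since \(\det(\Id-\Lambda^{r}A)=1-\det A\in\{0,2\}\); so the spectrum is not full whenever \(c\ge r\) (which includes every case with \(r=2\)), recovering the known part of the conjecture. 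The genuinely open range is \(r\ge 3\) with \(3\le c<r\), where one still gets the forced square factor \(\det(\Id-\Lambda^{3}A)^{2}\mid R(\phi)\). To finish I would fix a small target, say \(m=2\), and argue by contradiction: \(R(\phi)=2\) forces \(\det(\Id-\Lambda^{i}A)=\pm 1\) for all \(3\le i\le c\) and, by the product formula above, every layer with \(i\ge 3\) contributes exactly \(1\); since the layers \(i=1,2\) contribute \(|\det(\Id-A)|\) and \(|\det(\Id-\Lambda^{2}A)|\), one would have to show that no \(A\in\GL_{r}(\Z)\) realises \(|\det(\Id-A)|\cdot|\det(\Id-\Lambda^{2}A)|=2\) while keeping every \(\det(\Id-\bar{\phi}_{i})\) with \(i\ge 3\) equal to \(\pm 1\) and non-zero (and, if \(m=2\) is in fact attained, one iterates with \(m=3,5,\dots\) or seeks a universal congruence).

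The hard part is exactly this last step. Once \(c<r\) the divisibility above is inert, because each \(\det(\Id-\Lambda^{i}A)\) with \(i\le c\) is free to equal \(\pm 1\); excluding a given integer then demands control over the simultaneous values of all the determinants \(\det(\Id-\bar{\phi}_{i})=\sum_{k}(-1)^{k}(s_{1^{k}}[\ell_{i}])(\alpha_{1},\dots,\alpha_{r})\), i.e. over the Schur — and hence monomial — expansions of the plethysms \(s_{1^{k}}[\ell_{i}]\), which is precisely the open plethysm problem the paper reduces the question to. I would expect the best hope for a uniform proof to be a \(2\)-adic study of the low layers \(L^{(2)}\) and \(L^{(3)}\), aiming either at a congruence satisfied by every finite \(R(\phi)\) or at a lower bound on the smallest finite \(R(\phi)\) that grows with \(r\); but making this work for every \(c\) in the range \(3\le c<r\) is the crux, and is what keeps the statement a conjecture.
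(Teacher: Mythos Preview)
This statement is an open conjecture quoted from Dekimpe--Tertooy--Vargas; the paper does not prove it, nor does it claim to. There is therefore no ``paper's own proof'' to compare against, and your proposal is, appropriately, not a proof but a strategy sketch that ends by acknowledging the crux is unresolved.

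Your divisibility observation is correct and is a clean way to recover the known case \(c\ge r\). The multiset \(\{\eta(X):X\in\HH_{i}\}\) is \(\Sym{r}\)-stable, every squarefree degree-\(i\) monomial occurs in it with multiplicity \((i-1)!\) (the dimension of the multilinear component of the free Lie algebra), and the product of the remaining factors \((1-\eta_{\lambda}(X))\) over non-squarefree \(\eta(X)\) is a symmetric integer polynomial in the \(\alpha_{j}\), hence an integer. Thus \(\det(\Id-\Lambda^{i}A)^{(i-1)!}\mid\det(\Id-\bar{\phi}_{i})\), and for \(i=r\le c\) this gives \(2^{(r-1)!}\mid R(\phi)\) or \(R(\phi)=\infty\), so \(1\notin\SpecR(N_{r,c})\). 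That is exactly the result the paper attributes to \cite{DekimpeTertooyVargas20}.

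In the genuinely open range \(3\le c<r\) (which is non-empty only for \(r\ge 4\)), your sketch contains a slip. From \(R(\phi)=2\) and the divisibility you correctly get \(\det(\Id-\Lambda^{i}A)=\pm 1\) for every \(i\ge 3\), but you then assert that ``every layer with \(i\ge 3\) contributes exactly \(1\)''. That does not follow: \(\det(\Id-\bar{\phi}_{i})\) equals \(\det(\Id-\Lambda^{i}A)^{(i-1)!}\) times the non-squarefree factor, and nothing you have said prevents that factor from being \(\pm 2\). So the case split ``layers \(1,2\) carry the \(2\)'' versus ``some layer \(\ge 3\) carries the \(2\)'' is not resolved, and your reduction to excluding \(|\det(\Id-A)|\cdot|\det(\Id-\Lambda^{2}A)|=2\) is premature. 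You effectively concede this a sentence later, but the logic as written is muddled. More substantively, there is no reason to expect \(2\) to be the missing value: since \(\SpecR(N_{r,2})\) is full for \(r\ge 4\), there exist \(A\in\GL(r,\Z)\) with \(|\det(\Id-A)|\cdot|\det(\Id-\Lambda^{2}A)|=2\), and your argument gives no obstruction to one of these also having \(|\det(\Id-\bar{\phi}_{3})|=1\). Your final paragraph correctly identifies the real obstacle --- controlling the simultaneous values of the \(\det(\Id-\bar{\phi}_{i})\) requires exactly the plethystic information the paper singles out as open --- and that honest assessment is the right place to leave it.
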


Their proof for \(\SpecR(N_{r, 2})\) starts with a theoretical approach to the general problem of determining \(\SpecR(N_{r, c})\), after which they specify to the case \(c = 2\) and prove that \(\SpecR(N_{r, 2})\) is full for \(r \geq 4\).
They do so by giving for each positive integer \(n\) an automorphism with Reidemeister number equal to \(n\).
However, the computations are rather tedious and seem hard to generalise to other values of \(r\) and \(c\).

In this article, we continue along their theoretical approach.
They show that the Reidemeister number of an endomorphism \(\phi \in \End(N_{r, c})\) is fully determined by the eigenvalues of the induced map \(\phi_{1}\) on \(\frac{N_{r, c}}{\gamma_{2}(N_{r, c})}\) (since this quotient group is free abelian of finite rank, it makes sense to speak of eigenvalues).
While using this approach to determine \(\SpecR(N_{r, 2})\), they make use of the fact that \(R(\phi)\) is a symmetric polynomial in the eigenvalues of \(\phi_{1}\).
However, they never prove that this fact holds for all values of \(r\) and \(c\).
This result suggests a more combinatorial look at the problem of determining the Reidemeister spectrum of free nilpotent groups, which is the goal of this article.
Eventually, we relate this problem to a major open problem from combinatorics: the plethysm of Schur functions.

\subsection{Destination: plethysm of Schur functions}
The concepts of Schur functions and plethysms come from the field of symmetric functions.
Here, we give a rather informal introduction to these concepts; we work them out in more detail in \cref{sec:Combinatorics}.

A symmetric function can be thought of as an (infinite) power series in infinitely many variables \(x_{1}, x_{2}, \ldots\) such that permuting the variables does not change the power series.
For instance,
\[
	\sum_{i \geq 1} x_{i} \quad \text{ and } \quad \sum_{i < j} x_{i}x_{j}
\]
are both symmetric functions.
The Schur functions are a family of symmetric functions \(\{s_{\lambda}\}_{\lambda}\) that are indexed by the integer partitions, that is, the non-increasing sequences \(\lambda = (\lambda_{1}, \ldots, \lambda_{k}, \ldots)\) of non-negative integers with finite sum.
These Schur functions have nice combinatorial properties and each symmetric function is a (possibly infinite) linear combination of them.

Given two symmetric functions \(f\) and \(g\), the plethysm \(f[g]\) is essentially the symmetric function we obtain by substituting the monomials in \(g\) for the variables in \(f\), where a monomial is a product of one or more not necessarily distinct \(x_{i}\).
D.\;E.\;Littlewood introduced the concept of plethysm \cite{Littlewood50}.
One of the major open problems (\cref{obj:PlethysmSchurFunctions}) in the field of symmetric functions is to find a combinatorial interpretation of the coefficients \(a_{\mu, \lambda}^{\nu}\) in
\[
	s_{\lambda}[s_{\mu}] = \sum_{\nu} a_{\lambda, \mu}^{\nu} s_{\nu}.
\]
It is known that the coefficients \(a_{\lambda, \mu}^{\nu}\) are non-negative integers for all \(\lambda, \mu, \nu\), so it makes sense to look for a combinatorial interpretation for them.
No such interpretation is known in general, although it is known for certain special cases, such as \(s_{2}[s_{n}]\) \cite{Littlewood50} and \(s_{4}[s_{n}]\) \cite{Foulkes54,Howe87}.
Additionally, there are several algorithms known to compute these coefficients \cite{Agaoka95,Ibrahim56,Yang98}; see \cite{LoehrRemmel11} for an exposé on the topic and \cite{ColmenarejoOrellanaSaliolaSchillingZabrocki22} for a (non-exhaustive) list of known plethysms.

The coefficients \(a_{\lambda, \mu}^{\nu}\) also occur in representation theory.
Given a finite dimensional vector space \(V\), we look at the category of \(\GL(V)\)-modules.
On this category, we consider a family of functors \(\{S^{\lambda}\}_{\lambda}\), indexed by the partitions as well, called the Schur functors.
These have the property that for each partition \(\lambda\), the module \(S^{\lambda}V\) is irreducible; conversely, each irreducible \(\GL(V)\)-module is of the form \(S^{\lambda}V\) for some partition \(\lambda\).
Now, the \(\GL(V)\)-module \(S^{\lambda}(S^{\mu}V)\) decomposes as a direct sum of irreducible modules \(S^{\nu}V\), and the multiplicity of \(S^{\nu}V\) in this decomposition is precisely \(a_{\lambda, \mu}^{\nu}\).

This is one way of viewing the link between Schur functions and the representation theory of \(\GL(n, \C)\).
In \cref{subsec:SchurFunctionRepresentationTheory}, we present a different point of view.
We also scratch the surface of the link between Schur functions and the representation theory of the symmetric group \(\Sym{n}\).
We refer the reader to \cite{Weyman03} for more information about Schur functors, and to \cite{JamesKerber81,MacDonald95} for more information about representations of \(\Sym{n}\).
Finally, plethysms also occur in other areas, such as \(\tau\)-rings \cite{Hoffman79} and physics \cite{Wybourne70}.

We show that the problem of determining the Reidemeister spectrum of free nilpotent groups is related to plethysms of specific Schur functions, namely \(s_{1^{i}} [s_{k, 1}]\), and of other symmetric functions (see \cref{obj:CombinatorialObjectiveNrc,obj:CombinatorialObjectiveMrc}).

\section{From Reidemeister spectra to symmetric polynomials}	\label{sec:FromReidemeisterToPolynomials}

We start our journey with the main tools for computing Reidemeister numbers on abelian and nilpotent groups.
For ease of notation, we define the map
\[
	\inftynorm{\cdot}: \Z \to \N_{0} \cup \{\infty\}: n \mapsto \begin{cases}
		\infty	&	\mbox{if } n = 0,	\\
		|n|		&	\mbox{otherwise.}	
\end{cases}
\]
We refer to this maps as the \emph{\(\inftynorm{\cdot}\)-norm}.

\begin{lemma}[{\cite[\S~3]{Romankov11}}]	\label{lem:ReidemeisterNumberFreeAbelianGroups}
	Let \(A\) be a free abelian group of finite rank.
Let \({\phi \in \End(A)}\).
Then
	\[
		R(\phi) = \inftynorm{\det(\phi - \Id_{A})} = \inftynorm{p_{\phi}(1)},
	\]
	where \(p_{\phi}\) is the characteristic polynomial of \(\phi\).
\end{lemma}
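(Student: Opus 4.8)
The plan is to realise the set of \(\phi\)-conjugacy classes as the quotient of \(A\) by a subgroup, which is where abelianness is decisive. Writing the operation of \(A\) additively, two elements \(x, y \in A\) are \(\phi\)-conjugate precisely when \(x - y = z - \phi(z)\) for some \(z \in A\), i.e.\ when \(x - y \in \mathrm{im}(\Id_{A} - \phi)\). Since \(A\) is abelian, \(\Id_{A} - \phi\) is a group endomorphism of \(A\), so \(H := \mathrm{im}(\Id_{A} - \phi)\) is a subgroup; hence \(\phi\)-conjugacy is exactly the relation of lying in the same coset of \(H\), and therefore
\[
	R(\phi) = [A : H] = \size{\mathrm{coker}(\Id_{A} - \phi)}.
\]

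Next I would compute this index by linear algebra over \(\Z\). Fix a basis of \(A\), let \(n\) be the rank of \(A\), and let \(M \in \Z^{n \times n}\) be the matrix of \(\Id_{A} - \phi\) in this basis. By the Smith normal form (equivalently, the structure theorem for finitely generated abelian groups applied to the cokernel) there exist \(U, V \in \GL(n, \Z)\) with \(UMV = \mathrm{diag}(d_{1}, \ldots, d_{n})\) for integers \(d_{i}\), so that
\[
	\mathrm{coker}(\Id_{A} - \phi) \cong \bigoplus_{i = 1}^{n} \Z / d_{i}\Z .
\]
If some \(d_{i} = 0\), this cokernel is infinite and \(\det M = \pm\prod_{i} d_{i} = 0\); if every \(d_{i} \neq 0\), it is finite of order \(\prod_{i} \size{d_{i}} = \size{\det M}\), using \(\det U, \det V \in \{\pm 1\}\). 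In either case \([A : H] = \inftynorm{\det M} = \inftynorm{\det(\Id_{A} - \phi)}\), directly from the definition of the \(\inftynorm{\cdot}\)-norm.

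Finally I would match up the three expressions in the statement. Since \(\det(\phi - \Id_{A}) = (-1)^{n}\det(\Id_{A} - \phi)\), the two determinants have equal absolute value, hence \(\inftynorm{\det(\phi - \Id_{A})} = \inftynorm{\det(\Id_{A} - \phi)}\); and since the characteristic polynomial satisfies \(p_{\phi}(t) = \det(t\,\Id_{A} - \phi)\), we get \(p_{\phi}(1) = \det(\Id_{A} - \phi)\) and thus \(\inftynorm{p_{\phi}(1)} = \inftynorm{\det(\Id_{A} - \phi)}\). Combining these with the previous paragraph yields the claim. There is no serious obstacle here: the one place that needs a word of care is the degenerate case \(\det(\Id_{A} - \phi) = 0\), where the cokernel is genuinely infinite so that \(R(\phi) = \infty\) — which is precisely the value the \(\inftynorm{\cdot}\)-norm assigns to \(0\) — while everything else is standard bookkeeping with signs and the structure theorem.
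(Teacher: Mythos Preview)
Your argument is correct: the identification of \(\phi\)-conjugacy classes with cosets of \(\mathrm{im}(\Id_{A}-\phi)\), the Smith normal form computation of the cokernel, and the bookkeeping relating \(\det(\Id_{A}-\phi)\), \(\det(\phi-\Id_{A})\), and \(p_{\phi}(1)\) are all sound, including the handling of the degenerate case via the \(\inftynorm{\cdot}\)-norm. Note that the paper does not supply its own proof of this lemma at all---it simply cites Roman'kov \cite[\S~3]{Romankov11}---so there is no in-paper argument to compare against; your proof is essentially the standard one that Roman'kov's cited passage also gives.
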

The main tool for computing Reidemeister numbers on nilpotent groups is the so-called \emph{product formula}.
It was first discussed by V.\;Roman'kov \cite[Theorem~2.6, Lemma~2.7]{Romankov11}; see \cite[Proposition~5]{DekimpeGoncalvesOcampo21} for a statement and full proof of the result.
We say that a group \(G\) is \emph{nilpotent} if \(\gamma_{i}(G) = 1\) for some \(i \geq 1\), with \(\gamma_{i}(G)\) defined by \(\gamma_{1}(G) = G\) and \(\gamma_{i + 1}(G) := [\gamma_{i}(G), G]\) for \(i \geq 1\), as before.

\begin{theorem}[{\cite[Proposition~5]{DekimpeGoncalvesOcampo21}}]	\label{theo:productFormulaFGTFNilpotent}
	Let \(N\) be a \fgtf nilpotent group and let \(\phi \in \End(N)\).
Suppose that
	\[
		1 = N_{0} \lhd N_{1} \lhd \ldots \lhd N_{c} = N
	\]
	is a central series (\ie \([N_{i}, N] \leq N_{i - 1}\) for all \(i \in \{1, \ldots, c\}\)) such that
	\begin{itemize}
		\item for all \(i \in \{1, \ldots, c\}\), \(\phi(N_{i}) \leq N_{i}\), and
		\item for all \(i \in \{1, \ldots, c\}\), \(N_{i} / N_{i - 1}\) is torsion-free.
	\end{itemize}
	For \(i \in \{1, \ldots, c\}\), let \(\phi_{i}: N_{i} / N_{i - 1} \to N_{i} / N_{i - 1}\) denote the induced endomorphism.
Then
	\[
		R(\phi) = \prod_{i = 1}^{c} R(\phi_{i}).
	\]
\end{theorem}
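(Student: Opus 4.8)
The plan is to induct on the length $c$ of the central series. When $c = 1$, the hypothesis that $1 = N_0 \lhd N_1 = N$ be a central series forces $N$ to be abelian, hence (being finitely generated and torsion-free) free abelian of finite rank, and $R(\phi) = R(\phi_1)$ is automatic since $\phi_1 = \phi$ — \cref{lem:ReidemeisterNumberFreeAbelianGroups} even evaluates it. For $c \geq 2$, the quotient $\bar N := N/N_1$ is finitely generated, nilpotent and torsion-free (it inherits the series $1 \lhd N_2/N_1 \lhd \cdots \lhd N_c/N_1$, whose successive factors are the torsion-free groups $N_i/N_{i-1}$, and a group with a finite series with torsion-free factors is torsion-free). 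That series is central in $\bar N$, has length $c-1$, is invariant under the induced endomorphism $\bar\phi$, and $\bar\phi$ induces $\phi_i$ on its $(i-1)$-th factor; so by induction $R(\bar\phi) = \prod_{i=2}^{c} R(\phi_i)$. It therefore suffices to settle the two-step case: given an exact sequence $1 \to A \to N \xrightarrow{p} \bar N \to 1$ with $A := N_1$ free abelian of finite rank and central in $N$ and $\phi(A) \subseteq A$, show that
\[ R(\phi) = R(\phi|_A)\cdot R(\bar\phi). \]

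I would then analyse the induced map on Reidemeister classes. Since $\phi$-conjugate elements of $N$ have $\bar\phi$-conjugate images, $p$ induces a map $\bar p$ from the set $\mathcal{R}(\phi)$ of $\phi$-conjugacy classes to $\mathcal{R}(\bar\phi)$, and $\bar p$ is onto because $p$ is, so $R(\phi) = \sum_{[\bar g] \in \mathcal{R}(\bar\phi)} \#\,\bar p^{-1}([\bar g])$. Fix a class, a representative $\bar g$ and a lift $g \in N$. Lifting a $\bar\phi$-conjugator shows every class in $\bar p^{-1}([\bar g])$ is represented by some $ga$ with $a \in A$, so $a \mapsto [ga]_{\phi}$ is onto $\bar p^{-1}([\bar g])$. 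Using that $A$ is central and $\phi$-invariant, one computes directly that $ga \sim_{\phi} ga'$ if and only if $a'a^{-1}$ lies in $C_{\bar g} := \{\, g^{-1}hg\,\phi(h)^{-1} \mid h \in N,\ \bar h \in \operatorname{Stab}_{\bar\phi}(\bar g)\,\}$, where $\operatorname{Stab}_{\bar\phi}(\bar g) = \{\bar h \in \bar N \mid \bar h\,\bar g\,\bar\phi(\bar h)^{-1} = \bar g\}$; moreover $h \mapsto g^{-1}hg\,\phi(h)^{-1}$ restricts to a homomorphism on the preimage of $\operatorname{Stab}_{\bar\phi}(\bar g)$ (again because $A$ is central), so $C_{\bar g}$ is a subgroup of $A$ and $\#\,\bar p^{-1}([\bar g]) = [A : C_{\bar g}]$. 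Feeding $h \in A$ into this homomorphism gives $C_{\bar g} \supseteq \{a\,\phi(a)^{-1} \mid a \in A\}$; since the Reidemeister number of an endomorphism of an abelian group equals the index in that group of the image of $1$ minus the endomorphism, the index of $\{a\phi(a)^{-1} \mid a \in A\}$ in $A$ is exactly $R(\phi|_A)$. Hence $\#\,\bar p^{-1}([\bar g]) \leq R(\phi|_A)$ for every class, which already yields $R(\phi) \leq R(\phi|_A)\cdot R(\bar\phi)$.

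The heart of the proof is the reverse inequality, i.e. that $C_{\bar g} = \{a\,\phi(a)^{-1} \mid a \in A\}$ for every $\bar g$ — equivalently, that the well-defined homomorphism $\operatorname{Stab}_{\bar\phi}(\bar g) \to A/\{a\phi(a)^{-1} : a \in A\}$, $\bar h \mapsto \big[\,g^{-1}hg\,\phi(h)^{-1}\,\big]$, is trivial. Granting this, every fibre of $\bar p$ has cardinality exactly $R(\phi|_A)$, and summing over $\mathcal{R}(\bar\phi)$ gives the desired identity (with the conventions that a sum of infinitely many positive cardinalities is $\infty$ and that $R(\phi|_A) = \infty$ makes every fibre infinite). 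I expect this "defect vanishing" to be the main obstacle; it is precisely the step that uses torsion-freeness of the quotients $N_i/N_{i-1}$ — without it the product formula is false. To establish it I would start from the defining relation $\bar g^{-1}\bar h\,\bar g = \bar\phi(\bar h)$ of the stabiliser, whose lift to $N$ shows $g^{-1}hg$ and $\phi(h)$ differ by the element $g^{-1}hg\,\phi(h)^{-1} \in A$, and then run a commutator computation in Mal'cev coordinates — iterating the relation and invoking that $N$ is torsion-free — to pin that element down inside $\{a\phi(a)^{-1} \mid a \in A\} = \operatorname{im}(1 - \phi|_A)$.
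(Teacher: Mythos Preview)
The paper does not prove this theorem; it is quoted from \cite{DekimpeGoncalvesOcampo21} and used as a black box, so there is no in-paper argument to compare against.

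On the substance of your proposal: the inductive reduction to a two-step central extension and the fibre analysis of the surjection \(\bar p:\mathcal{R}(\phi)\to\mathcal{R}(\bar\phi)\) are correct and standard, and you have correctly isolated the crux, namely that the ``defect'' homomorphism \(\operatorname{Stab}_{\bar\phi}(\bar g)\to A/(1-\phi|_A)(A)\) is trivial. But you do not actually prove this. The phrase ``run a commutator computation in Mal'cev coordinates, iterating the relation and invoking that \(N\) is torsion-free'' is not an argument: it is not clear what quantity you would iterate, nor why torsion-freeness would force the defect into \(\operatorname{im}(1-\phi|_A)\). As written, the proof stops precisely at the point that carries all the content.

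The usual completion is different from what you sketch. One observes that \(\operatorname{Stab}_{\bar\phi}(\bar g)=\operatorname{Fix}(\tau_{\bar g}\circ\bar\phi)\), where \(\tau_{\bar g}\) is conjugation by \(\bar g\); since inner automorphisms act trivially on each factor \(N_i/N_{i-1}\), the endomorphism \(\tau_{\bar g}\circ\bar\phi\) of \(\bar N\) induces the \emph{same} maps \(\phi_2,\ldots,\phi_c\). One then proves (as a companion lemma, by the same induction on \(c\)) that for a finitely generated torsion-free nilpotent group, if all induced maps on the factors have nonzero \(\det(1-\phi_i)\) then the fixed-point subgroup is trivial. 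Applying this to \(\tau_{\bar g}\circ\bar\phi\) when \(R(\bar\phi)<\infty\) gives \(\operatorname{Stab}_{\bar\phi}(\bar g)=1\), so \(C_{\bar g}=(1-\phi|_A)(A)\) automatically and every fibre has size exactly \(R(\phi_1)\). The remaining cases (\(R(\bar\phi)=\infty\), or \(R(\phi_1)=\infty\)) are then handled by short direct checks that \(R(\phi)=\infty\).
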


Since the factors \(N_{i} / N_{i - 1}\) are finitely generated torsion-free abelian groups, the numbers \(R(\phi_{i})\) are given by \cref{lem:ReidemeisterNumberFreeAbelianGroups}, and are therefore completely determined by the eigenvalues of \(\phi_{i}\).

For free nilpotent groups, we can say even more.
Let \(r \geq 2\) and \(c \geq 2\) be integers.
For \(\phi \in \End(N_{r, c})\), we can apply \cref{theo:productFormulaFGTFNilpotent} by using the lower central series of \(N_{r, c}\), which is given by
\[
	1 = \gamma_{c + 1}(N_{r, c}) \lhd \gamma_{c}(N_{r, c}) \lhd \ldots \lhd \gamma_{1}(N_{r, c}) = N_{r, c}.
\]
As the name suggests, the lower central series is indeed a central series, and, moreover, it satisfies both conditions from \cref{theo:productFormulaFGTFNilpotent} (see \cite[Lemma~1.1.2]{Dekimpe96}).

Now, for any integer \(i \in \{2, \ldots, c\}\), the eigenvalues of \(\phi_{i}\) for \(i \geq 2\) are completely determined by those of \(\phi_{1}\).
To prove this, we largely follow the approach by K.\;Dekimpe, S.\;Tertooy and A.\;Vargas mentioned in the introduction.
However, our use of Lie algebras differs from theirs.

\begin{remark}
	Throughout this article, whenever we (implicitly or explicitly) apply \cref{theo:productFormulaFGTFNilpotent} to a free nilpotent group and an endomorphism \(\phi\), we use the lower central series.
\end{remark}

\begin{defin}
	Let \(G\) be a group.
The \emph{Lie ring associated to \(G\)} is the abelian group
	\[
		L(G) := \bigoplus_{i = 1}^{\infty} \frac{\gamma_{i}(G)}{\gamma_{i + 1}(G)}
	\]
	equipped with a Lie bracket defined as follows: for \(i, j \geq 1\), \(x \in \gamma_{i}(G)\) and \(y \in \gamma_{j}(G)\) arbitrary, we put
	\[
		[x\gamma_{i + 1}(G), y\gamma_{j + 1}(G)] := [x, y]\gamma_{i + j + 1}(G),
	\]
	and then extend it linearly to the whole of \(L(G)\).
Here, \([x, y]\) on the right-hand side denotes the commutator in \(G\), which is defined as \([x, y] := \inv{x}\inv{y} xy\).
The properties of the Lie bracket are:
	\begin{itemize}
		\item for all \(x \in L(G): [x, x] = 0\);
		\item for all \(x, y \in L(G): [x, y] = -[y, x]\);
		\item for all \(x, y, z \in L(G): [[x, y], z] + [[y, z], x] + [[z, x], y] = 0\) (Jacobi identity).
	\end{itemize}

\end{defin}

By tensoring with \(\C\), we can make \(L(G)\) into a (complex) Lie algebra \(L^{\C}(G) := L(G) \otimes_{\Z} \C\).
Every \(\phi \in \End(G)\) induces a Lie ring morphism on \(L(G)\) given by
\begin{equation}	\label{eq:definitionLieRingMorphism}
	\phi_{*}\left(\sum_{i \geq 1}x_{i}\gamma_{i + 1}(G)\right) := \sum_{i \geq 1} \phi(x_{i})\gamma_{i + 1}(G)
\end{equation}
where \(x_{i} \in \gamma_{i}(G)\) for each \(i \geq 1\) and only finitely many of them are non-trivial.
Due to the way the Lie bracket on \(L(G)\) is defined, \(\phi_{*}\) preserves the Lie bracket.
In addition, if, for every \(i \geq 1\), we write \(\phi_{i}\) for the induced endomorphism on \(\gamma_{i}(G) / \gamma_{i + 1}(G)\), we can rewrite \eqref{eq:definitionLieRingMorphism} as
\[
	\phi_{*}\left(\sum_{i \geq 1}x_{i}\gamma_{i + 1}(G)\right) = \sum_{i \geq 1} \phi_{i}(x_{i}\gamma_{i + 1}(G)).
\]

The Lie ring morphism \(\phi_{*}\) in turn induces a Lie algebra morphism on \(L^{\C}(G)\), which we also write as \(\phi_{*}\).

Let \(r \geq 2\) and \(c \geq 2\) be integers.
For \(G = N_{r, c}\), the associated Lie ring \(L(N_{r, c})\) is given by
\[
	L(N_{r, c}) = \bigoplus_{i = 1}^{c} \frac{\gamma_{i}(N_{r, c})}{\gamma_{i + 1}(N_{r, c})}
\]
as \(\gamma_{i}(N_{r, c}) = 1\) for \(i \geq c + 1\).
The Lie algebra \(L^{\C}(N_{r, c})\), in turn, is (isomorphic to) the \emph{free \(c\)-step nilpotent Lie algebra of rank \(r\)}, which is defined as the Lie algebra
	\[
		\nrc := \frac{\ff_{r}}{\gamma_{c + 1}(\ff_{r})}.
	\]	
	Here, \(\ff_{r}\) is the free (complex) Lie algebra of rank \(r\) and \(\gamma_{1}(\nn) = \nn\) and \(\gamma_{i + 1}(\nn) = [\gamma_{i}(\nn), \nn]\) for any Lie algebra \(\nn\) and integer \(i \geq 1\).
	
	One way to realise the isomorphism between \(L^{\C}(N_{r, c})\) and \(\nrc\) goes as follows: let \(X := \{X_{1}, \ldots, X_{r}\}\) be a set of \(r\) symbols.
Suppose \(F(r)\) is the free group on \(X\) and \(\ff_{r}\) is the free Lie algebra on \(X\).
Since \(X\) generates \(F(r)\), the set \(X_{r, c} := \{X_{i} \gamma_{c + 1}(F(r)) \mid i \in \{1, \ldots, r\}\}\) generates \(N_{r, c}\) as a group.
By \cite[17.2.1~Lemma]{KargapolovMerzljakov79}, each \(\gamma_{i}(N_{r, c})\) is generated by \(\gamma_{i + 1}(N_{r, c})\) and all \(i\)-fold commutator brackets of elements in \(X_{r, c}\).
The construction of the associated Lie ring then implies that \(L(N_{r, c})\) is generated, as a Lie ring, by the set \(\{X_{i} \gamma_{2}(N_{r, c}) \mid i \in \{1, \ldots, r\}\}\), which is the image of \(X_{r, c}\) under the natural projection to \(N_{r, c} / \gamma_{2}(N_{r, c})\).
Consequently, we can define a map \(\Psi: L^{\C}(N_{r, c}) \to \nn_{r, c}\) by putting \(\Psi(X_{i} \gamma_{2}(N_{r, c}) \otimes z) := zX_{i}\) for each \(i \in \{1, \ldots, r\}\) and \(z \in \C\), as their images completely determine \(\Psi\) on the whole of \(L^{\C}(N_{r, c})\).
This map \(\Psi\) is the desired isomorphism.
	
	This isomorphism \(\Psi\) enables us to, given \(\phi \in \End(N_{r, c})\), work on \(\nrc\) to compute the eigenvalues of \(\phi_{*}\).
Indeed, define \(\psi_{*} := \Psi \circ \phi_{*} \circ \inv{\Psi} \in \End(\nrc)\) and let \(\psi_{*, i}\) be the induced map on \(\gamma_{i}(\nrc) / \gamma_{i + 1}(\nrc)\).
We argue that \(\psi_{*, i}\) has the same eigenvalues as \(\phi_{i}\) for each \(i \in \{1, \ldots, c\}\).
	
	Fix \(i \in \{1, \ldots, c\}\).
The map \(\Psi\) induces the isomorphism
	\[
		\Psi_{i}: \frac{\gamma_{i}(L^{\C}(N_{r, c}))}{\gamma_{i + 1}(L^{\C}(N_{r, c}))} \to \frac{\gamma_{i}(\nrc)}{\gamma_{i + 1}(\nrc)}.
	\]
	By the definition of \(L(N_{r, c})\),
	\[
		\frac{\gamma_{i}(L(N_{r, c}))}{\gamma_{i + 1}(L(N_{r, c}))} = \frac{\gamma_{i}(N_{r, c})}{\gamma_{i + 1}(N_{r, c})},
	\]
	so by the properties of the tensor product,
	
	\[
		\frac{\gamma_{i}(L^{\C}(N_{r, c}))}{\gamma_{i + 1}(L^{\C}(N_{r, c}))} = \frac{\gamma_{i}(N_{r, c})}{\gamma_{i + 1}(N_{r, c})} \otimes \C.
	\]
	Hence, \(\Psi_{i}\) takes on the form
	\begin{equation}	\label{eq:isomorphismFactorsLCS}
		\Psi_{i}: \frac{\gamma_{i}(N_{r, c})}{\gamma_{i + 1}(N_{r, c})} \otimes \C \to \frac{\gamma_{i}(\nrc)}{\gamma_{i + 1}(\nrc)}: x \mapsto \Psi(x) \gamma_{i + 1}(\nrc).
	\end{equation}
	Now, by definition of \(\Psi\) and \(\psi_{*}\), the following diagram commutes:
\[
	\begin{tikzcd}
		&	\frac{\gamma_{i}(N_{r, c})}{\gamma_{i + 1}(N_{r, c})} \otimes \C	\ar[r, "\Psi_{i}", "\simeq"']	 \ar[d, "\phi_{i} \otimes \Id_{\C}"] & \frac{\gamma_{i}(\nrc)}{\gamma_{i + 1}(\nrc)} \ar[d, "\psi_{*, i}"]	\\
		&	\frac{\gamma_{i}(N_{r, c})}{\gamma_{i + 1}(N_{r, c})} \otimes \C	\ar[r, "\Psi_{i}", "\simeq"']	&	\frac{\gamma_{i}(\nrc)}{\gamma_{i + 1}(\nrc)}
	\end{tikzcd}
\]
		
Thus, the map \(\psi_{*, i}\) has the same eigenvalues as \(\phi_{i} \otimes \Id_{\C}\), which in turn has the same eigenvalues as \(\phi_{i}\).

To determine the eigenvalues of \(\phi_{i}\) and show that they are completely determined by those of \(\phi_{1}\), we need the concept of a Hall basis of a free Lie algebra.

\begin{defin}	\label{def:HallBasisFreeLieAlgebra}
	Let \(\ff_{r}\) be the free Lie algebra of rank \(r\) generated by the elements \(X_{1}, \ldots, X_{r}\).
A \emph{Hall basis} is a totally ordered vector space basis \(\HH\) of \(\ff_{r}\) which is constructed in an inductive way as \(\HH = \bigcup_{n = 1}^{\infty} \HH_{n}\):
	\begin{itemize}
		\item \(\HH_{1} := \{X_{1}, \ldots, X_{r}\}\) and \(X_{1} < \ldots < X_{r}\).
		\item For \(n \geq 2\), assume that \(\HH_{1}\) up to \(\HH_{n - 1}\) have been constructed and that \(\bigcup_{k = 1}^{n - 1} \HH_{k}\) has been given a total order.
Then \(\HH_{n}\) is defined as the set of all Lie brackets \([Y, Z]\) with \(Y \in \HH_{k}\) and \(Z \in \HH_{l}\) such that
		\begin{itemize}
			\item \(k + l = n\),
			\item \(Y < Z\),
			\item if \(Z = [Z_{1}, Z_{2}]\) for some \(Z_{i} \in \HH_{k_{i}}\), then \(Z_{1} \leq Y\).
		\end{itemize}
		\item Finally, we extend the order to \(\bigcup_{k = 1}^{n} \HH_{k}\) by picking any total order of \(\HH_{n}\) and imposing that \(X < Y\) for all \(X \in \HH_{k}\) and \(Y \in \HH_{n}\) with \(k < n\).
	\end{itemize}
\end{defin}

For instance, the first three parts of a Hall basis of \(\ff_{3}\) are given by
\[
	\HH_{1} = \{X_{1}, X_{2}, X_{3}\},
\]
\[
	\HH_{2} = \{[X_{1}, X_{2}], [X_{1}, X_{3}], [X_{2}, X_{3}]\}
\]
and
\begin{align*}
	\HH_{3}	&= \{[X_{1}, [X_{1}, X_{2}]], [X_{1}, [X_{1}, X_{3}]], [X_{2}, [X_{1}, X_{2}]], [X_{2}, [X_{1}, X_{3}]],	\\
			& \quad [X_{2}, [X_{2}, X_{3}]], [X_{3}, [X_{1}, X_{2}]], [X_{3}, [X_{1}, X_{3}]], [X_{3}, [X_{2}, X_{3}]]\}
\end{align*}
where the order on \(\HH_{2}\) and \(\HH_{3}\) is given by the order in which the elements are written down.
For more details and for a proof that \(\HH\) is indeed a basis, we refer the reader to \cite[Chapter~IV]{Serre65}.

The size of each \(\HH_{k}\) is determined by E.\;Witt:
\begin{theorem}[{\cite[Satz~3]{Witt37}}]	\label{theo:DimensionHallBasis}
	Let \(r \geq 2\) and \(k \geq 1\) be integers with \(k \leq r\).
Let \(\HH\) be a Hall basis of \(\ff_{r}\).
The size of \(\HH_{k}\) is given by
	\[
		N(r, k) := \frac{1}{k} \sum_{d \mid k} \mu(d) r^{k / d},
	\]
	where \(\mu: \Z_{>0} \to \{-1, 0, 1\}\) is the M\"{o}bius function, \ie
	\[
		\mu(d) = \begin{cases}
			1	&	\mbox{if \(d\) is the product of an even number of distinct prime numbers} \\
			-1	&	\mbox{if \(d\) is the product of an odd number of distinct prime numbers} \\
			0	&	\mbox{otherwise}.
		\end{cases}
	\]
\end{theorem}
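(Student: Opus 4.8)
The plan is to deduce the formula from the Poincaré--Birkhoff--Witt theorem by comparing two descriptions of a basis of the free associative algebra on \(r\) generators; the hypothesis \(k \le r\) will play no role, and the argument in fact establishes the identity for all integers \(r, k \ge 1\).

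First I would grade \(\ff_{r}\) by word length, so that \(\ff_{r} = \bigoplus_{n \ge 1} \ff_{r}^{(n)}\), and record from the classical theory of free Lie algebras (see \cite[Chapter~IV]{Serre65}) the two structural facts that make everything work: that the part \(\HH_{n}\) of the Hall basis is a vector-space basis of the homogeneous component \(\ff_{r}^{(n)}\), and that the universal enveloping algebra of \(\ff_{r}\) is the tensor algebra \(T(V)\) on \(V := \Span(X_{1}, \dots, X_{r})\). Since \(\ff_{r}^{(n)}\) sits inside the degree-\(n\) component \(V^{\otimes n}\) of \(T(V)\), each \(\HH_{n}\) is finite, so it makes sense to set \(N(r, n) := \size{\HH_{n}}\) (a priori only known to be \(\le r^{n}\)). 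Applying PBW to the totally ordered basis \(\HH\) of \(\ff_{r}\), the weakly increasing monomials in the elements of \(\HH\) form a basis of \(T(V)\). Counting this basis by the word-length grading, in which an element of \(\HH_{n}\) has degree \(n\), yields the formal power series identity
\[
	\frac{1}{1 - rt} \;=\; \sum_{d \ge 0} \left(\dim V^{\otimes d}\right) t^{d} \;=\; \prod_{n \ge 1} \frac{1}{(1 - t^{n})^{N(r, n)}}.
\]
Taking \(-\log\) of both sides turns the left side into \(\sum_{k \ge 1} r^{k} t^{k} / k\) and the right side into \(\sum_{n \ge 1} N(r, n) \sum_{m \ge 1} t^{nm} / m\); comparing coefficients of \(t^{k}\) gives \(r^{k} = \sum_{n \mid k} n\, N(r, n)\), and Möbius inversion then produces \(k\, N(r, k) = \sum_{d \mid k} \mu(d)\, r^{k/d}\), which is the claim.

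I do not expect a serious obstacle: the only inputs beyond a routine generating-function manipulation are the two classical facts cited above (that the Hall basis is a homogeneous basis of \(\ff_{r}\), and that \(U(\ff_{r}) \cong T(V)\)), and once these are granted the remainder is bookkeeping. An alternative route would identify \(\dim \ff_{r}^{(k)}\) with the number of Lyndon words of length \(k\) on \(r\) letters — equivalently, aperiodic necklaces, enumerated by the same expression — but that would require developing the theory of Lyndon words, whereas the PBW argument uses only the Hall-basis framework already set up here.
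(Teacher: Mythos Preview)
Your argument is correct and is the classical proof of Witt's formula via the Poincar\'e--Birkhoff--Witt theorem and generating functions; you are also right that the hypothesis \(k \le r\) is irrelevant.

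However, there is nothing to compare against: the paper does not prove this theorem at all. It is stated with attribution to Witt (\cite[Satz~3]{Witt37}) and used as a black box; no argument is supplied. So your write-up is not an alternative to the paper's proof but a self-contained justification of a result the paper merely quotes. What you gain is that the reader need not consult Witt's original paper, and the PBW route meshes well with the Hall-basis machinery already introduced in \cref{def:HallBasisFreeLieAlgebra}. If you wanted to trim further, you could drop the parenthetical remark about Lyndon words, since it is not used.
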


Let \(\HH\) be a Hall basis of \(\ff_{r}\) and \(i \geq 1\).
By the properties of a free Lie algebra, the linear span of \(\HH_{i}\) fully lies in \(\{0\} \cup{(\gamma_{i}(\ff_{r}) \setminus \gamma_{i + 1}(\ff_{r}))}\) for every \(i \geq 1\).
Using this, it is readily verified that the projection of the subset \(\bigcup_{k = 1}^{c} \HH_{k}\) forms a (vector space) basis of \(\nrc\) for all \(c \geq 2\).
We call such a basis a \emph{Hall basis} of \(\nrc\) as well and we will also write \(\bigcup_{k = 1}^{c} \HH_{k}\) for this projection.

We formulate the next result as a separate lemma for reference purposes.
\begin{lemma}	\label{lem:basisFactorsLCSnrc}
	Let \(\HH = \bigcup_{k = 1}^{c} \HH_{k}\) be a Hall basis of \(\nrc\) and let \(k \geq 1\) be an integer.
Then the natural projections of the elements in \(\HH_{k}\) form a basis of \(\gamma_{k}(\nrc) / \gamma_{k + 1}(\nrc)\).
\end{lemma}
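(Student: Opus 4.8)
The plan is to leverage the structure of a Hall basis together with the basic grading properties of the free Lie algebra $\ff_{r}$ that were recalled just before the statement. Recall that for each $i \geq 1$, the linear span of $\HH_{i}$ sits inside $\{0\} \cup \bigl(\gamma_{i}(\ff_{r}) \setminus \gamma_{i+1}(\ff_{r})\bigr)$, and that $\HH = \bigcup_{n=1}^{\infty} \HH_{n}$ is a vector space basis of $\ff_{r}$. Since $\nrc = \ff_{r} / \gamma_{c+1}(\ff_{r})$ and the lower central series of $\ff_{r}$ is graded by the Hall components, the key structural fact is that $\gamma_{k}(\ff_{r}) = \bigoplus_{n \geq k} \Span \HH_{n}$ for each $k$; passing to the quotient, $\gamma_{k}(\nrc) = \bigoplus_{n = k}^{c} \Span \HH_{n}$.

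First I would fix $k \geq 1$ and observe that if $k > c$ then $\gamma_{k}(\nrc) = 0$ and $\HH_{k}$ projects to the empty set (or to $0$), so the statement holds trivially; hence assume $1 \leq k \leq c$. Next I would establish the displayed direct sum decompositions $\gamma_{k}(\nrc) = \bigoplus_{n=k}^{c} \Span \HH_{n}$ and $\gamma_{k+1}(\nrc) = \bigoplus_{n=k+1}^{c} \Span \HH_{n}$ — these follow from the fact that a Hall basis is compatible with the lower central series filtration, which is exactly the content of \cite[Chapter~IV]{Serre65} combined with the remark preceding the lemma. From this it is immediate that the quotient
\[
	\frac{\gamma_{k}(\nrc)}{\gamma_{k+1}(\nrc)} \cong \frac{\bigoplus_{n=k}^{c} \Span \HH_{n}}{\bigoplus_{n=k+1}^{c} \Span \HH_{n}}
\]
is isomorphic, as a vector space, to $\Span \HH_{k}$, with the isomorphism induced by the natural projection. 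Therefore the images of the elements of $\HH_{k}$ under the projection $\gamma_{k}(\nrc) \to \gamma_{k}(\nrc)/\gamma_{k+1}(\nrc)$ span the quotient, and they are linearly independent because any nontrivial linear combination of them lies in $\Span \HH_{k} \setminus \bigl(\bigoplus_{n=k+1}^{c}\Span\HH_{n}\bigr)$, hence is nonzero modulo $\gamma_{k+1}(\nrc)$. This gives the desired basis.

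The only genuine subtlety — and the point I would state carefully rather than wave away — is the claim that $\gamma_{k}(\ff_{r})$ equals the span of $\bigcup_{n \geq k}\HH_{n}$, i.e.\ that the Hall basis is filtered correctly by the lower central series. The inclusion $\Span\bigcup_{n \geq k}\HH_{n} \subseteq \gamma_{k}(\ff_{r})$ is clear since each element of $\HH_{n}$ with $n \geq k$ is an $n$-fold bracket and $n \geq k$. For the reverse inclusion one uses that $\ff_{r}$ is graded with $\gamma_{k}(\ff_{r})$ equal to the sum of the homogeneous components of degree $\geq k$, together with the fact that $\HH_{n}$ consists of homogeneous elements of degree $n$ and spans that component; this is precisely what is proved in \cite[Chapter~IV]{Serre65}. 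I expect this grading bookkeeping to be the main (and essentially only) obstacle; once it is in place, the rest is the routine quotient-of-direct-sums argument above. I would also note explicitly that this is the statement that was used implicitly in identifying $L^{\C}(N_{r,c})$ with $\nrc$ and will be used repeatedly when matrices of $\psi_{*,i}$ are written down with respect to Hall bases.
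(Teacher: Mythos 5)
Your argument is correct, but it takes a genuinely different route from the paper. You work entirely inside the free Lie algebra: you invoke the grading \(\ff_{r} = \bigoplus_{n \geq 1} \Span\HH_{n}\) with \(\gamma_{k}(\ff_{r}) = \bigoplus_{n \geq k} \Span\HH_{n}\), push it down to \(\nrc\), and read off \(\gamma_{k}(\nrc)/\gamma_{k+1}(\nrc) \cong \Span\HH_{k}\) as a quotient of direct sums. The paper instead runs a dimension count: it already knows (from the discussion preceding the lemma) that the projections of \(\HH_{k}\) \emph{generate} the quotient, so it only needs to match cardinalities, which it does by identifying \(\gamma_{k}(\nrc)/\gamma_{k+1}(\nrc)\) with \(\bigl(\gamma_{k}(N_{r,c})/\gamma_{k+1}(N_{r,c})\bigr) \otimes \C\) via the isomorphism \eqref{eq:isomorphismFactorsLCS}, applying the Third Isomorphism Theorem to reduce to \(\gamma_{k}(F(r))/\gamma_{k+1}(F(r))\), and quoting Witt's result that this group is free abelian of rank \(N(r,k)\) — the same number as \(\size{\HH_{k}}\) by \cref{theo:DimensionHallBasis}. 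Your approach buys independence from Witt's dimension formula and from the group-theoretic detour; the price is that you must carry the full statement that the Hall basis is compatible with the lower central series filtration (which you correctly flag as the one nontrivial input, and which is indeed in \cite[Chapter~IV]{Serre65}), whereas the paper gets away with the weaker fact that \(\Span\HH_{i}\) avoids \(\gamma_{i+1}(\ff_{r})\setminus\{0\}\) plus results it cites anyway for other purposes. Both proofs are sound; yours is arguably the more structural one, the paper's the more economical given its existing setup.
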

\begin{proof}
	Recall from \eqref{eq:isomorphismFactorsLCS} that
	\[
		\frac{\gamma_{k}(N_{r, c})}{\gamma_{k + 1}(N_{r, c})} \otimes \C \cong \frac{\gamma_{k}(\nrc)}{\gamma_{k + 1}(\nrc)}.
	\]
	Additionally, by the Third Isomorphism Theorem,
	\[
		\frac{\gamma_{k}(N_{r, c})}{\gamma_{k + 1}(N_{r, c})} \cong \frac{\gamma_{k}(F(r))}{\gamma_{k + 1}(F(r))}.
	\]
	Now, by \cite[Satz~4]{Witt37}, the latter group is free abelian of rank \(N(r, k)\).
	Therefore, the vector space
	\[
	 	\frac{\gamma_{k}(N_{r, c})}{\gamma_{k + 1}(N_{r, c})} \otimes \C	
	\]
	has dimension \(N(r, k)\).
	
	Since the natural projection of \(\HH_{k}\) generates \(\gamma_{k}(\nrc) / \gamma_{k + 1}(\nrc)\) and \(\HH_{k}\) contains \(N(r, k)\) elements, it forms a basis of \(\gamma_{k}(\nrc) / \gamma_{k + 1}(\nrc)\).
	
\end{proof}

Given \(\phi \in \End(N_{r, c})\), we now express, using a Hall basis, all eigenvalues of the induced maps \(\phi_{i}\) in terms of those of \(\phi_{1}\).
Let \(x_{1}, \ldots, x_{r}\) be \(r\) indeterminates.
Given a Hall basis \(\HH\) of \(\nrc\), we define a map \(\eta: \HH \to \Z[x_{1}, \ldots, x_{r}]\) as follows: for each \(i \in \{1, \ldots, r\}\), we put \(\eta(X_{i}) := x_{i}\).
This defines \(\eta\) on \(\HH_{1}\).
Now, suppose that \(\eta\) is defined on \(\bigcup_{i = 1}^{n - 1} \HH_{i}\) for some \(n \geq 2\).
Let \(X \in \HH_{n}\).
Then \(X = [Y, Z]\) for some \(Y \in \HH_{k}, Z \in \HH_{l}\) where \(k + l = n\).
We then define
\[
	\eta(X) := \eta(Y)\eta(Z).
\]

Thus, \(\eta([X_{1}, X_{2}]) = \eta(X_{1})\eta(X_{2}) = x_{1}x_{2}\), for example.

Lastly, for an \(r\)-tuple \(\lambda = (\lambda_{1}, \ldots, \lambda_{r}) \in \C^{r}\), we define two maps.
We define \(\ev_{\lambda}\) to be the evaluation map associated to \(\lambda\), \ie
\[
	\ev_{\lambda}: \Z[x_{1}, \ldots, x_{r}] \to \C: p(x_{1}, \ldots, x_{r}) \mapsto p(\lambda_{1}, \ldots, \lambda_{r}).
\]
We then define, given a Hall basis \(\HH\) of \(\nrc\), the map \(\eta_{\lambda}: \HH \to \C\) to be the composition \({\ev_{\lambda}} \circ \eta\).

\begin{lemma}[{\cite[Lemma~3.5]{DekimpeTertooyVargas20}}]	\label{lem:eigenvaluesphii}
	Let \(\phi \in \End(N_{r, c})\) and let, for \(i \in \{1, \ldots, c\}\), \(\phi_{i}\) denote the induced endomorphism on \(\gamma_{i}(N_{r, c}) / \gamma_{i + 1}(N_{r, c})\).
Let \(\lambda = (\lambda_{1}, \ldots, \lambda_{r})\) be the eigenvalues  of \(\phi_{1}\), where each eigenvalue is listed as many times as its multiplicity.
Let \(\HH\) be a Hall basis of \(\nrc\).
Then, for each \(k \in \{1, \ldots, c\}\), the eigenvalues of \(\phi_{k}\) are given by
	\begin{equation}	\label{eq:eigenvaluesphik}
		\{\eta_{\lambda}(X) \mid X \in \HH_{k}\}.
	\end{equation}
	Each eigenvalue is also listed as many times as its multiplicity.
	
\end{lemma}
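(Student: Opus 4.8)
The plan is to transport the computation to the free nilpotent Lie algebra $\nrc$, where the Hall basis lives, and to reduce it to one triangular linear-algebra argument. By the discussion preceding the lemma, for each $k$ the map $\phi_k$ has the same eigenvalues, with multiplicity, as the endomorphism $\psi_{*, k}$ induced by $\psi_* := \Psi \circ \phi_* \circ \inv{\Psi} \in \End(\nrc)$ on $\gamma_k(\nrc)/\gamma_{k + 1}(\nrc)$, which by \cref{lem:basisFactorsLCSnrc} has $\HH_k$ as a basis; moreover $\psi_{*, 1}$ has eigenvalues $\lambda$. Write $\psi_*(X_i) = v_i + w_i$ with $v_i \in \Span(X_1, \ldots, X_r)$ the degree-one part (so $v_i$ represents $\psi_{*, 1}(X_i)$) and $w_i \in \gamma_2(\nrc)$. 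Since $\psi_*$ is a Lie algebra homomorphism and each $X \in \HH_k$ is an iterated bracket in $X_1, \ldots, X_r$, expanding $\psi_*(X)$ as the same bracket in the $v_i + w_i$ shows that, modulo $\gamma_{k + 1}(\nrc)$, $\psi_{*, k}(X)$ equals that bracket in the $v_i$ alone; so everything is governed by the single linear map $\psi_{*, 1}$. I would then reduce to $\psi_{*, 1}$ being upper triangular with respect to $X_1 < \ldots < X_r$, with a prescribed ordering $\lambda' = (\lambda'_1, \ldots, \lambda'_r)$ of the multiset $\{\lambda_1, \ldots, \lambda_r\}$ on the diagonal: any $g \in \GL(\Span(X_1, \ldots, X_r))$ extends uniquely to a graded automorphism $\hat g \in \Aut(\nrc)$ (by the universal property of $\ff_r$ and since $\gamma_{c + 1}(\ff_r)$ is characteristic), and replacing $\psi_*$ by $\hat g \circ \psi_* \circ \inv{\hat g}$ conjugates each $\psi_{*, k}$ by an isomorphism, hence preserves its eigenvalues, while turning $\psi_{*, 1}$ into $g \circ \psi_{*, 1} \circ \inv g$. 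A suitable $g$ thus yields $\psi_{*, 1}(X_i) = \lambda'_i X_i + \sum_{j < i} a_{ij} X_j$.

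For $X \in \HH$ let $c(X) \in \Znonneg^r$ denote its content, so that $\eta(X) = \prod_i x_i^{c(X)_i}$ and $\eta_{\lambda'}(X) = \prod_i (\lambda'_i)^{c(X)_i}$, and partially order $\Znonneg^r$ by $c \preceq c'$ iff $\sum_{l \leq m} c_l \leq \sum_{l \leq m} c'_l$ for all $m$, so that replacing a unit of content in coordinate $i$ by one in a coordinate $j < i$ yields a strictly larger content. Expanding $\psi_{*, k}(X)$ via the description above and multilinearity of the bracket: the summand using the diagonal coefficient $\lambda'_i$ at every generator reproduces $\eta_{\lambda'}(X)\, X$, and each remaining summand is an iterated bracket of degree $k$ in which at least one generator has been replaced by one of strictly smaller index, hence lies in the content-homogeneous subspace of $\gamma_k(\nrc)/\gamma_{k + 1}(\nrc)$ of its own content — spanned by the $X' \in \HH_k$ of that content — and every such content strictly exceeds $c(X)$. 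Thus, ordering $\HH_k$ by any linear extension of $\preceq$, the matrix of $\psi_{*, k}$ is triangular with diagonal entries $\eta_{\lambda'}(X)$, $X \in \HH_k$, and its eigenvalue multiset is exactly $\{\eta_{\lambda'}(X) \mid X \in \HH_k\}$. Since the triangularization can be carried out with any prescribed order of eigenvalues along the diagonal, this holds for \emph{every} ordering $\lambda'$ of $\lambda$; as the eigenvalues of $\phi_k$ do not depend on such a choice, $\{\eta_{\lambda'}(X) \mid X \in \HH_k\}$ is independent of the ordering and equals the eigenvalue multiset of $\phi_k$, which for the tuple $\lambda$ is the assertion.

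The step I expect to be most delicate is exactly this bookkeeping: identifying the error terms coming from the strictly lower-triangular part of $\psi_{*, 1}$ and checking they all land in strictly larger content, so that $\psi_{*, k}$ is manifestly triangular in one fixed ordering of $\HH_k$; everything else is then formal. As a sanity check, $\sum_{X \in \HH_k} \eta(X)$ equals the degree-$k$ Lie polynomial and is therefore a symmetric function, which re-explains the order-independence above and, on setting all $x_i = 1$, recovers $\size{\HH_k} = N(r, k)$ from \cref{theo:DimensionHallBasis}.
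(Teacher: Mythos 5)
Your proof is correct, but it takes a genuinely different route from the paper's. The paper first replaces \(\phi_{*}\) by its semisimple part, invoking the non-trivial fact (cited from Segal) that the semisimple part of a Lie algebra endomorphism of \(\nrc\) is again a Lie algebra endomorphism; it then picks eigenvectors \(E_{1}, \ldots, E_{r}\) that freely generate \(\nrc\), builds a \emph{new} Hall basis \(\HH'\) on them, checks inductively that every element of \(\HH'\) is an eigenvector with eigenvalue \(\eta'_{\lambda}(X)\), and finally needs a separate three-step argument to transport the answer from \(\HH'\) back to the originally given \(\HH\). You avoid both ingredients: the graded reduction (only the degree-one parts \(v_{i}\) of \(\psi_{*}(X_{i})\) matter modulo \(\gamma_{k + 1}(\nrc)\)), combined with Schur triangularization of \(\psi_{*, 1}\) via a graded conjugation and the content filtration, exhibits \(\psi_{*, k}\) as block-triangular with respect to the \emph{given} basis \(\HH_{k}\), with scalar diagonal blocks \(\eta_{\lambda}(X)\). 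What your approach buys is a more elementary, self-contained argument that needs neither the Jordan-decomposition fact nor the Hall-basis-independence step (which the paper in any case re-derives afterwards for its symmetry argument); what it costs is exactly the bookkeeping you identify, and the one point worth spelling out in a full write-up is that the content-homogeneous component of \(\gamma_{k}(\nrc) / \gamma_{k + 1}(\nrc)\) of a given multidegree is spanned by the elements of \(\HH_{k}\) of that multidegree (immediate since each Hall element is content-homogeneous and \(\HH_{k}\) is a basis by \cref{lem:basisFactorsLCSnrc}), so the error terms genuinely expand only over basis elements of strictly \(\preceq\)-larger content. A small simplification: you may take \(\lambda' = \lambda\) from the outset, so your closing paragraph on independence of the ordering is not needed for the statement.
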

\begin{proof}
	Let \(\phi_{*}\) denote the induced Lie algebra morphism on \(\nrc\).
Since the semisimple part of \(\phi_{*}\) is also a Lie algebra morphism on \(\nrc\) (see \cite[Corollary~2, p.~135]{Segal83}), we may assume that \(\phi_{*}\) itself is semisimple.
As discussed earlier, the eigenvalues of the induced map on \(\gamma_{k}(\nrc) / \gamma_{k + 1}(\nrc)\) are the same as those of \(\phi_{k}\).
	
	Since \(\phi_{*}\) is semisimple, we can find a basis of \(\nrc\) consisting of eigenvectors of \(\phi_{*}\).
In particular, we can find \(r\) eigenvectors \(E_{1}, \ldots, E_{r}\), with respective eigenvalues \(\lambda_{1}, \ldots, \lambda_{r}\) such that their images under the natural projection to \(\nrc / \gamma_{2}(\nrc)\) form a basis of this quotient space.
This implies that \(E_{1}, \ldots, E_{r}\) freely generate \(\nrc\).
Consequently, we can construct a Hall basis \(\HH'\) of \(\nrc\) with \(\HH'_{1} = \{E_{1}, \ldots, E_{r}\}\) (in that order).
	
	Next, define \(\eta': \HH' \to \Z[x_{1}, \ldots, x_{r}]\) analogously to \(\eta\) and put \(\eta'_{\lambda} := {\ev_{\lambda}} \circ \eta'\).
We prove that every \(X\) in \(\HH'\) is an eigenvector of \(\phi_{*}\) with eigenvalue \(\eta'_{\lambda}(X)\).
Indeed, by construction, this is true for each \(X \in \HH'_{1}\).
Inductively, suppose it holds for all elements in \(\bigcup_{i = 1}^{n - 1} \HH'_{i}\) for some \(n \geq 2\).
Let \(X \in \HH'_{n}\) be arbitrary.
Then \(X = [Y, Z]\) for some \(Y, Z \in \bigcup_{i = 1}^{n - 1} \HH'_{i}\).
By the induction hypothesis, \(\phi_{*}(Y) = \eta'_{\lambda}(Y) Y\) and \(\phi_{*}(Z) = \eta'_{\lambda}(Z) Z\).
Then
	\begin{align*}
		\phi_{*}(X)	&=	\phi_{*}([Y, Z])	\\
					&=	[\phi_{*}(Y), \phi_{*}(Z)]	\\
					&=	[\eta'_{\lambda}(Y) Y, \eta'_{\lambda}(Z) Z]	\\
					&=	\eta'_{\lambda}(Y) \eta'_{\lambda}(Z) [Y, Z]	\\
					&=	\eta'_{\lambda}(X) X.
	\end{align*}
	This proves that the Hall basis \(\HH'\) consists entirely of eigenvectors of \(\phi_{*}\).

	Finally, from \cref{lem:basisFactorsLCSnrc}, it follows that the eigenvalues of the induced map on \(\gamma_{k}(\nrc) / \gamma_{k + 1}(\nrc)\), and thus also those of \(\phi_{k}\), are precisely given by \(\{\eta'_{\lambda}(X) \mid X \in \HH'_{k}\}\), listed with multiplicity.
	
	To completely finish the proof, we have to argue that \(\{\eta'_{\lambda}(X) \mid X \in \HH'_{k}\}\) equals \eqref{eq:eigenvaluesphik}, for \(\HH\) and \(\HH'\) are not (necessarily) equal.
We proceed in three steps:
	
	\begin{enumerate}[\bfseries Step 1:]
	\item The expression \(\{\eta'_{\lambda}(X) \mid X \in \HH'_{k}\}\) does not depend on the order we choose in \(\HH'_{k}\) for \(k \geq 2\) in the construction of \(\HH'\).
Indeed, to prove that \(\{\eta'_{\lambda}(X) \mid X \in \HH'_{k}\}\) consists precisely of all eigenvalues of \(\phi_{*, k}\), we never use this order.
	
	\item Let \(f: \nrc \to \nrc\) be the isomorphism determined by \(f(X_{i}) = E_{i}\) for all \(i \in \{1, \ldots, r\}\).
	If we order the set \(f(\HH)\) by putting \(f(X) < f(Y)\) if and only if \(X < Y\) for all \(X, Y \in \HH\), then \(f(\HH)\) is also a Hall basis of \(\nrc\).
	By the previous step, replacing \(\HH'\) with \(f(\HH)\) does not change the expression \(\{\eta'_{\lambda}(X) \mid X \in \HH'_{k}\}\).
	
	\item Finally, we prove that \(\eta(X) = (\eta' \circ f)(X)\) for all \(X \in \HH\).
	For \(X \in \HH_{1}\), this is true by construction of all three maps.
	So, suppose it holds for all \(X \in \HH_{k}\) with \(k < n\).
	Let \(X = [Y, Z] \in \HH_{n}\) for some \(Y \in \HH_{k}\) and \(Z \in \HH_{l}\) with \(k, l < n\).
	Then
	\begin{align*}
		\eta(X) = \eta([Y, Z]) &= \eta(Y) \eta(Z)	\\
						&= \eta'(f(Y))\eta'(f(Z))	\\
						&= \eta'([f(Y), f(Z)])	\\
						&= \eta'(f([Y, Z]))	\\
						&= \eta'(f(X)).
	\end{align*}
	
	Therefore,
	\[
		\{\eta'_{\lambda}(X) \mid X \in \HH'_{k}\} = \{\eta'_{\lambda}(f(X)) \mid X \in \HH_{k}\} = \{\eta_{\lambda}(X) \mid X \in \HH_{k}\}.
	\]
\end{enumerate}

This finishes the proof.
\end{proof}

Combining this with \cref{theo:productFormulaFGTFNilpotent}, we find that, for integers \(r \geq 2\) and \(c \geq 2\) and for \(\phi \in \End(N_{r, c})\),
\[
	R(\phi) = \inftynormb{\prod_{k = 1}^{c} \prod_{X \in \HH_{k}}(1 - \eta_{\lambda}(X))}.
\]
For instance, on \(N_{3, 2}\), we find that
\[
	R(\phi) = \inftynorm{(1 - \lambda_{1})(1 - \lambda_{2})(1 - \lambda_{3})(1 - \lambda_{1}
	\lambda_{2})(1 - \lambda_{1}\lambda_{3})(1 - \lambda_{2}\lambda_{3})}.
\]

In other words, \(R(\phi)\) is (the \(\inftynorm{\cdot}\)-norm of) a polynomial in terms of the eigenvalues of \(\phi_{1}\), \ie there are polynomials \(f_{r, k} \in \Z[x_{1}, \ldots, x_{r}]\) for \(k \in \{1, \ldots, c\}\) such that
\[
	R(\phi) = \inftynormb{\prod_{k = 1}^{c} f_{r, k}(\lambda_{1}, \ldots, \lambda_{r})},
\]
namely
\[
	f_{r, k}(x_{1}, \ldots, x_{r}) = \prod_{X \in \HH_{k}}(1 - \eta(X)).
\]

A crucial observation is that these polynomials \(f_{r, k}\) do not depend on \(\phi\): the map \(\eta: \HH \to \Z[x_{1}, \ldots, x_{r}]\) completely determines which products of the \(x_{i}\) occur and it is independent of \(\lambda\).
We do, however, have to argue that the image \(\eta(\HH)\) does not depend on \(\HH\).

To that end, note that the proof of \cref{lem:eigenvaluesphii} from the second paragraph onwards holds for \emph{any} endomorphism \(\psi\) on \(\nrc\):
if \(\lambda_{1}, \ldots, \lambda_{r}\) are the eigenvalues of the induced endomorphism \(\psi_{1}: \nrc / \gamma_{2}(\nrc) \to \nrc / \gamma_{2}(\nrc)\), then, for each \(k \in \{1, \ldots, c\}\), those of \(\psi_{k} : \gamma_{k}(\nrc) / \gamma_{k + 1}(\nrc) \to \gamma_{k}(\nrc) / \gamma_{k + 1}(\nrc)\) are given by \eqref{eq:eigenvaluesphik}.
This implies that, for every \(k \geq 1\), the set
\[
	\{\eta_{\lambda}(X) \mid X \in \HH_{k}\} = \ev_{\lambda}(\{\eta(X) \mid X \in \HH_{k}\})
\]
is independent of \(\HH\) for all choices of \(\lambda \in \C^{r}\).
Indeed, every \(\lambda \in \C^{r}\) induces an endomorphism \(\psi\) of \(\nrc\) by putting \(\psi(Y_{i}) := \lambda_{i}Y_{i}\) for \(i \in \{1, \ldots, r\}\) for some freely generating set \(\{Y_{1}, \ldots, Y_{r}\}\) of \(\ff_{r}\).
As \(\C\) is an infinite field, this implies that \(\{\eta(X) \mid X \in \HH_{k}\}\) itself must be independent of \(\HH\), for all \(k \geq 1\).

Thus, if we know what the polynomials \(f_{r, k}\) are, we can compute \(\SpecR(N_{r, c})\) by running over all automorphisms \(\phi\) of \(N_{r, c}\), computing the eigenvalues of \(\phi_{1}\), and plugging those into the \(f_{r, k}\).

However, computing these eigenvalues is quite hard, since most of the time, there are no closed expressions for them.
To simplify the process of computing Reidemeister numbers, we argue that the \(f_{r, k}\) are symmetric polynomials.
A polynomial \(f \in \Z[x_{1}, \ldots, x_{n}]\) is called \emph{symmetric} if
	\[
		f(x_{1}, \ldots, x_{n}) = f(x_{\inv{\sigma}(1)}, \ldots, x_{\inv{\sigma}(n)})
	\]
	for each \(\sigma \in \Sym{n}\).
The ring of all symmetric polynomials in \(n\) variables with integer coefficients is denoted by \(\sympolnZ\).

To prove that the \(f_{r, k}\) are symmetric, let \(\psi \in \End(\nrc)\) and let \(\lambda = (\lambda_{1}, \ldots, \lambda_{r})\) be the eigenvalues of \(\psi_{1}\).
As argued before, the eigenvalues of \(\psi_{k}\) are given by
\[
	\{\eta_{\lambda}(X) \mid X \in \HH_{k}\} = \ev_{\lambda}(\{\eta(X) \mid X \in \HH_{k}\})
\]
Let \(\sigma \in \Sym{r}\) be a permutation.
Then \(\lambda_{\sigma} := (\lambda_{\sigma(1)}, \ldots, \lambda_{\sigma(r)})\) are still the eigenvalues of \(\psi_{1}\), only in a different order.
Consequently, the eigenvalues of \(\psi_{k}\) are also given by
\[
	\{\eta_{\lambda_{\sigma}}(X) \mid X \in \HH_{k}\} = \ev_{\lambda_{\sigma}}(\{\eta(X) \mid X \in \HH_{k}\}).
\]
Therefore,
\[
	\ev_{\lambda}(\{\eta(X) \mid X \in \HH_{k}\}) = \ev_{\lambda_{\sigma}}(\{\eta(X) \mid X \in \HH_{k}\}).
\]
This equality holds for all \(\sigma \in \Sym{r}\).
Moreover, it holds for all \(\lambda \in \C^{r}\) as well, since each \(\lambda \in \C^{r}\) induces an endomorphism of \(\nrc\) with eigenvalues given by \(\lambda\) (as argued earlier).

We can rewrite the previous equality as
\[
	\ev_{\lambda}(\{\eta(X) \mid X \in \HH_{k}\}) = \ev_{\lambda}(\inv{\sigma} \cdot \{\eta(X) \mid X \in \HH_{k}\}),
\]
where \(\tau \in \Sym{r}\) acts on \(\polZ{r}\) as follows:
\[
	\tau \cdot f(x_{1}, \ldots, x_{r}) := f(x_{\inv{\tau}(1)}, \ldots, x_{\inv{\tau}(r)}).
\]

Since, for all \(\sigma \in \Sym{r}\), this equality holds for all \(\lambda \in \C^{r}\) and since \(\C\) is an infinite field, we must have the equality
\[
	\{\eta(X) \mid X \in \HH_{k}\} = \inv{\sigma} \cdot \{\eta(X) \mid X \in \HH_{k}\},
\]
for all \(\sigma \in \Sym{r}\).
Equivalently, 
\[
	\{\eta(X) \mid X \in \HH_{k}\} = \sigma \cdot \{\eta(X) \mid X \in \HH_{k}\},
\]

This, finally, implies that
\[
	f_{r, k} = \prod_{X \in \HH_{k}}(1 - \eta(X))
\]
is a symmetric polynomial for all \(k \in \{1, \ldots, c\}\).
Thus, we have proven the following:

\begin{theorem}	\label{theo:ReidemeisterNumberSymmetricPolynomialEigenvalues}
	Let \(r \geq 2\) and \(c \geq 2\) be integers.
There exist symmetric polynomials \(f_{r, k} \in \sympolrZ\) (with \(1 \leq k \leq c\)) such that, for any \(\phi \in \End(N_{r, c})\),
	\[
		R(\phi) = \inftynormb{\prod_{k = 1}^{c} f_{r, k}(\lambda_{1}, \ldots, \lambda_{r})},
	\]
	where \(\lambda_{1}, \ldots, \lambda_{r}\) are the eigenvalues of \(\phi_{1}\).
\end{theorem}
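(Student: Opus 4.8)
The plan is to assemble the three ingredients developed above — the product formula along the lower central series (\cref{theo:productFormulaFGTFNilpotent}), the abelian Reidemeister formula (\cref{lem:ReidemeisterNumberFreeAbelianGroups}), and the eigenvalue description (\cref{lem:eigenvaluesphii}) — into a single polynomial identity, and then to promote the polynomials appearing in it to \emph{symmetric} polynomials by exploiting that \(\C\) is infinite.

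First I would fix, once and for all, a Hall basis \(\HH = \bigcup_{k=1}^{c}\HH_{k}\) of \(\nrc\) together with the associated map \(\eta\), and set
\[
	f_{r, k}(x_{1}, \ldots, x_{r}) := \prod_{X \in \HH_{k}}\bigl(1 - \eta(X)\bigr),
\]
which lies in \(\polZ{r}\) since every \(\eta(X)\) is a monomial in the \(x_{i}\). Given \(\phi \in \End(N_{r,c})\) with \(\phi_{1}\) having eigenvalue list \(\lambda = (\lambda_{1},\ldots,\lambda_{r})\) (with multiplicity), \cref{theo:productFormulaFGTFNilpotent} gives \(R(\phi) = \prod_{k=1}^{c} R(\phi_{k})\), and \cref{lem:ReidemeisterNumberFreeAbelianGroups} identifies \(R(\phi_{k}) = \inftynorm{p_{\phi_{k}}(1)}\), where \(p_{\phi_{k}}(1) = \prod_{\mu}(1 - \mu)\) with \(\mu\) ranging over the eigenvalues of \(\phi_{k}\) with multiplicity. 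By \cref{lem:eigenvaluesphii} this eigenvalue multiset is exactly \(\{\eta_{\lambda}(X) \mid X \in \HH_{k}\}\), so \(p_{\phi_{k}}(1) = f_{r,k}(\lambda_{1},\ldots,\lambda_{r})\). Since \(\inftynorm{\cdot}\) is multiplicative (with \(\infty\) absorbing, because a vanishing factor forces the product to vanish), multiplying over \(k\) yields
\[
	R(\phi) = \inftynormb{\,\prod_{k=1}^{c} f_{r,k}(\lambda_{1},\ldots,\lambda_{r})\,},
\]
with \(f_{r,k}\) manifestly independent of \(\phi\). Only the symmetry of each \(f_{r,k}\) remains.

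For symmetry I would use that the eigenvalue list of \(\phi_{1}\) is only well defined up to reordering: if \(\lambda = (\lambda_{1},\ldots,\lambda_{r})\) is one valid listing then so is \(\lambda_{\sigma} = (\lambda_{\sigma(1)},\ldots,\lambda_{\sigma(r)})\) for any \(\sigma \in \Sym{r}\), and applying \cref{lem:eigenvaluesphii} to both listings (with the same fixed Hall basis \(\HH\)) shows that the eigenvalue multiset of \(\phi_{k}\) equals both \(\ev_{\lambda}\{\eta(X)\mid X\in\HH_{k}\}\) and \(\ev_{\lambda_{\sigma}}\{\eta(X)\mid X\in\HH_{k}\}\); hence \(\ev_{\lambda}(f_{r,k}) = \ev_{\lambda_{\sigma}}(f_{r,k}) = \ev_{\lambda}(\inv{\sigma}\cdot f_{r,k})\). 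Every tuple of \(\C^{r}\) arises as the eigenvalue list of some endomorphism of \(\nrc\) (make it diagonal on a free generating set), so this holds for all \(\lambda \in \C^{r}\); since \(\C\) is infinite, \(f_{r,k} = \inv{\sigma}\cdot f_{r,k}\) for all \(\sigma \in \Sym{r}\), i.e.\ \(f_{r,k}\) is symmetric, and as it has integer coefficients, \(f_{r,k} \in \sympolrZ\). The same density argument, applied to the identity ``eigenvalue multiset \(= \ev_{\lambda}\{\eta(X)\mid X\in\HH_{k}\}\)'' for two different Hall bases, shows in passing that \(f_{r,k}\) does not depend on the choice of \(\HH\).

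I expect the only real obstacle to be bookkeeping with \emph{multisets}: one must carry eigenvalues and the \(\eta(X)\) with multiplicity throughout (this is where \cref{lem:basisFactorsLCSnrc}, hence \cref{theo:DimensionHallBasis}, is used — it guarantees \(\size{\HH_{k}}\) equals the dimension of \(\gamma_{k}(\nrc)/\gamma_{k+1}(\nrc)\), so no eigenvalue is missed or double-counted), and one must be careful that \cref{lem:eigenvaluesphii} is genuinely symmetric in the eigenvalue list, which is why its proof allows an arbitrary ordering of \(\lambda\). There is no analytic input and nothing deep beyond what \cref{lem:eigenvaluesphii} already supplies; the passage from a pointwise identity over \(\C^{r}\) to an identity of polynomials is the standard ``infinite field'' argument, used twice.
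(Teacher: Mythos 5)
Your proposal is correct and follows essentially the same route as the paper: the product formula along the lower central series, the abelian Reidemeister formula, \cref{lem:eigenvaluesphii} to identify the eigenvalue multiset of \(\phi_{k}\) with \(\ev_{\lambda}\{\eta(X) \mid X \in \HH_{k}\}\), and the infinite-field argument (via diagonal Lie algebra endomorphisms realising arbitrary \(\lambda \in \C^{r}\)) to upgrade pointwise permutation-invariance to symmetry of the \(f_{r,k}\). The only cosmetic difference is that the paper first establishes \(\Sym{r}\)-invariance of the multiset \(\{\eta(X) \mid X \in \HH_{k}\}\) and then deduces symmetry of the product, whereas you argue directly on the product \(f_{r,k}\) itself; this is the same argument in different packaging.
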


For instance, for \(r = 3\) and \(k \in \{1, 2\}\), the polynomials are
\[
	f_{3, 1}(x_{1}, x_{2}, x_{3}) = \prod_{i = 1}^{3} (1 - x_{i})
\]
and
\[
	f_{3, 2}(x_{1}, x_{2}, x_{3})	= \prod_{1 \leq i < j \leq 3} (1 - x_{i}x_{j}).
\]

The next step is to use this observation to express the Reidemeister number of \(\phi\) purely in terms of the coefficients of the characteristic polynomial of \(\phi_{1}\).

\begin{defin}
	Let \(n, k \geq 1\) be integers with \(k \leq n\).
The \emph{elementary symmetric polynomial of degree \(k\) in \(n\) variables} is the symmetric polynomial
	\[
		e_{k}(x_{1}, \ldots, x_{n}) := \sum_{1 \leq i_{1} < \ldots < i_{k} \leq n} x_{i_{1}} \cdots x_{i_{k}}.
	\]
\end{defin}
Let \(p_{\phi} = \sum_{i = 0}^{r} a_{i}x^{i}\) be the characteristic polynomial of \(\phi_{1}\) with \(a_{r} = 1\) and let \(\lambda_{1}, \ldots, \lambda_{r}\) be the eigenvalues of \(\phi_{1}\), \ie the zeroes of \(p_{\phi}\).
Then
\[
	\sum_{i = 0}^{r} a_{i}x^{i} = \prod_{i = 1}^{r} (x - \lambda_{i}).
\]
Therefore, Vieta's relations yield
	\[
		a_{r - i} = (-1)^{i} e_{i}(\lambda_{1}, \ldots, \lambda_{r}).
	\]
for all \(i \in \{0, \ldots, r - 1\}\).
So, to express \(R(\phi)\) in terms of the coefficients of \(p_{\phi}\), it is sufficient to show that the polynomials \(f_{r, k}\) can be expressed in terms of the elementary symmetric polynomials in \(r\) variables.
This is possible due to the Fundamental Theorem of Symmetric Polynomials:

\begin{theorem}[{Fundamental theorem of symmetric polynomials, \cite[Theorem~1.12]{StewartTall02}}]	\label{theo:FundamentalTheoremSymmetricPolynomials}
	Let \(n \geq 1\) be an integer and let \(f \in \sympolnZ\).
Then there is a unique polynomial \(g \in \Z[x_{1}, \ldots, x_{n}]\) such that
	\[
		f(x_{1}, \ldots, x_{n}) = g(e_{1}(x_{1}, \ldots, x_{n}), \ldots, e_{n}(x_{1}, \ldots, x_{n})).
	\]
\end{theorem}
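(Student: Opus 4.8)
The plan is to establish existence of $g$ and its uniqueness separately, both via a ``leading monomial'' argument with respect to the lexicographic order on the monomials of $\polZ{n}$.

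For existence, I would argue by iterated reduction. Order monomials lexicographically, so that $x_1^{a_1}\cdots x_n^{a_n}$ precedes $x_1^{b_1}\cdots x_n^{b_n}$ when at the first index $i$ with $a_i \neq b_i$ one has $a_i < b_i$. Given a nonzero $f \in \sympolnZ$ with lex-leading term $c\, x_1^{a_1}\cdots x_n^{a_n}$, symmetry forces every rearrangement of this monomial to occur in $f$, so the lex-largest among them — the one with $a_1 \geq a_2 \geq \cdots \geq a_n$ — is in fact the leading one; hence the exponent sequence may be assumed non-increasing. The key computation is that $e_1^{a_1 - a_2} e_2^{a_2 - a_3} \cdots e_{n - 1}^{a_{n - 1} - a_n} e_n^{a_n}$ is symmetric with lex-leading monomial exactly $x_1^{a_1}\cdots x_n^{a_n}$ and leading coefficient $1$. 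Subtracting $c$ times this product from $f$ yields a symmetric polynomial with strictly smaller lex-leading monomial, with coefficients still in $\Z$ and total degree not increased. As only finitely many monomials have total degree at most $\deg f$, this process terminates and writes $f$ as a $\Z$-polynomial in $e_1, \ldots, e_n$.

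For uniqueness, it suffices to show $e_1, \ldots, e_n$ are algebraically independent over $\Z$: no nonzero $h \in \Z[y_1, \ldots, y_n]$ satisfies $h(e_1, \ldots, e_n) = 0$ in $\polZ{n}$. By the same computation, the product $e_1^{b_1} \cdots e_n^{b_n}$ has lex-leading monomial $x_1^{b_1 + \cdots + b_n} x_2^{b_2 + \cdots + b_n} \cdots x_n^{b_n}$ with coefficient $1$, and the map $(b_1, \ldots, b_n) \mapsto (b_1 + \cdots + b_n, b_2 + \cdots + b_n, \ldots, b_n)$ is injective (one inverts it by taking successive differences). Hence distinct monomials of $h$ produce products with distinct lex-leading monomials, so the leading term coming from the lex-largest such product survives uncancelled in $h(e_1, \ldots, e_n)$; thus $h(e_1, \ldots, e_n) \neq 0$. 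This pins down the polynomial $g$ produced above uniquely.

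The main obstacle is the bookkeeping in the existence step: verifying that the lex-leading monomial of $e_1^{a_1 - a_2} \cdots e_n^{a_n}$ is precisely $x_1^{a_1}\cdots x_n^{a_n}$ (a short but notation-heavy induction on $n$) and that the reduction genuinely terminates (which holds because the lex order restricted to monomials of bounded total degree is a well-order). That the coefficients never leave $\Z$ is automatic from the construction. Since this is a classical fact, one may instead simply invoke \cite[Theorem~1.12]{StewartTall02}.
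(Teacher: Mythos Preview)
The paper does not supply its own proof of this theorem; it simply states the result and cites \cite[Theorem~1.12]{StewartTall02}. Your lexicographic leading-monomial argument is exactly the classical proof (and is the one given in the cited reference), so your proposal is correct and there is nothing further to compare.
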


All together, we obtain
\begin{theorem}	\label{theo:ReidemeisterNumbersInTermsOfCoefficientCharPol}
	There exist polynomials \(\tilde{g}_{r, k} \in \Z[x_{1}, \ldots, x_{r}]\) for \(1 \leq k \leq c\) such that, for any \(\phi \in \End(N_{r, c})\), if \(p_{\phi} = \sum\limits_{i = 0}^{r} a_{i}x^{i}\) is the characteristic polynomial of \(\phi_{1} \in \End(N_{r, c} / \gamma_{2}(N_{r, c}))\) with \(a_{r} = 1\), then
	\[
		R(\phi) = \inftynormb{\prod_{k = 1}^{c} \tilde{g}_{r, k}(a_{0}, \ldots, a_{r - 1})}.
	\]
\end{theorem}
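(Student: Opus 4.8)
The plan is to chain together the three results just established. By \cref{theo:ReidemeisterNumberSymmetricPolynomialEigenvalues}, there exist symmetric polynomials \(f_{r, k} \in \sympolrZ\) (for \(1 \leq k \leq c\)), \emph{independent of \(\phi\)}, such that
\[
	R(\phi) = \inftynormb{\prod_{k = 1}^{c} f_{r, k}(\lambda_{1}, \ldots, \lambda_{r})},
\]
where \(\lambda_{1}, \ldots, \lambda_{r}\) are the eigenvalues of \(\phi_{1}\). First I would apply the Fundamental Theorem of Symmetric Polynomials (\cref{theo:FundamentalTheoremSymmetricPolynomials}) to each \(f_{r, k}\), obtaining a (unique) polynomial \(g_{r, k} \in \Z[x_{1}, \ldots, x_{r}]\) with
\[
	f_{r, k}(x_{1}, \ldots, x_{r}) = g_{r, k}\big(e_{1}(x_{1}, \ldots, x_{r}), \ldots, e_{r}(x_{1}, \ldots, x_{r})\big).
\]

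Next I would relate the numbers \(e_{i}(\lambda_{1}, \ldots, \lambda_{r})\) to the coefficients of \(p_{\phi}\). Since \(\phi_{1}\) is an endomorphism of the free abelian group \(N_{r, c} / \gamma_{2}(N_{r, c})\) of rank \(r\), its characteristic polynomial \(p_{\phi} = \sum_{i = 0}^{r} a_{i} x^{i}\) is monic of degree \(r\), lies in \(\Z[x]\), and has \(\lambda_{1}, \ldots, \lambda_{r}\) as its roots, listed with multiplicity. Hence Vieta's relations (recalled just before \cref{theo:FundamentalTheoremSymmetricPolynomials}) give \(e_{i}(\lambda_{1}, \ldots, \lambda_{r}) = (-1)^{i} a_{r - i}\) for all \(i \in \{1, \ldots, r\}\). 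I would then define \(\tilde{g}_{r, k} \in \Z[x_{1}, \ldots, x_{r}]\) by substituting, for each \(i \in \{1, \ldots, r\}\), the \(i\)-th argument of \(g_{r, k}\) by \((-1)^{i} x_{r - i + 1}\); this is again a polynomial with integer coefficients, as it only involves sign changes and a relabelling of the variables, and it does not depend on \(\phi\) because \(g_{r, k}\) does not. By construction,
\[
	\tilde{g}_{r, k}(a_{0}, \ldots, a_{r - 1}) = g_{r, k}\big(e_{1}(\lambda_{1}, \ldots, \lambda_{r}), \ldots, e_{r}(\lambda_{1}, \ldots, \lambda_{r})\big) = f_{r, k}(\lambda_{1}, \ldots, \lambda_{r}),
\]
and therefore
\[
	R(\phi) = \inftynormb{\prod_{k = 1}^{c} \tilde{g}_{r, k}(a_{0}, \ldots, a_{r - 1})},
\]
as required.

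The argument is purely formal once the three ingredients are in hand, so there is no genuine obstacle here; the only thing demanding a little care is the index- and sign-bookkeeping in passing from the \(e_{i}(\lambda_{1}, \ldots, \lambda_{r})\) to the \(a_{r - i}\), together with the (immediate) observations that the resulting \(\tilde{g}_{r, k}\) have integer coefficients and are independent of \(\phi\).
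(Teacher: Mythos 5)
Your proposal is correct and follows essentially the same route as the paper: invoke \cref{theo:ReidemeisterNumberSymmetricPolynomialEigenvalues}, expand each \(f_{r,k}\) via \cref{theo:FundamentalTheoremSymmetricPolynomials}, and substitute using Vieta's relations, with the same definition \(\tilde{g}_{r,k}(x_{1},\ldots,x_{r}) = g_{r,k}((-1)^{1}x_{r},\ldots,(-1)^{r}x_{1})\). The index and sign bookkeeping checks out.
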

\begin{proof}
	By \cref{theo:ReidemeisterNumberSymmetricPolynomialEigenvalues},
	\[
		R(\phi) = \inftynormb{\prod_{k = 1}^{c} f_{r, k}(\lambda_{1}, \ldots, \lambda_{r})}
	\]
	for some symmetric polynomials \(f_{r, k} \in \sympolrZ\).
By \cref{theo:FundamentalTheoremSymmetricPolynomials}, there exist polynomials \(g_{r, k} \in \polZ{r}\) for \(k \in \{1, \ldots, c\}\) such that
	\[
		f_{r, k}(x_{1}, \ldots, x_{r}) = g_{r, k}(e_{1}(x_{1}, \ldots, x_{r}), \ldots, e_{r}(x_{1}, \ldots, x_{r})).
	\]
	Hence, by Vieta's relations mentioned earlier, 
	\begin{align*}
		f_{r, k}(\lambda_{1}, \ldots, \lambda_{r})	&= g_{r, k}((-1)^{1} a_{r - 1}, (-1)^{2} a_{r - 2}, \ldots, (-1)^{r} a_{0})	\\
										&= \tilde{g}_{r, k}(a_{0}, \ldots, a_{r - 1})
	\end{align*}
	where
	\[
		\tilde{g}_{r, k}(x_{1}, \ldots, x_{r}) := g_{r, k}((-1)^{1} x_{r}, (-1)^{2} x_{r - 1}, \ldots, (-1)^{r} x_{1}).
\qedhere
	\]
\end{proof}

To continue the example we gave earlier, we find that
\begin{align*}
	f_{3, 1}(x_{1}, x_{2}, x_{3})	&=	\prod_{i = 1}^{3} (1 - x_{i})	\\
							&=	1 - e_{1}(x_{1}, x_{2}, x_{3}) + e_{2}(x_{1}, x_{2}, x_{3}) - e_{3}(x_{1}, x_{2}, x_{3}), 
\end{align*}
so
\[
	g_{3, 1}(x_{1}, x_{2}, x_{3}) = 1 - x_{1} + x_{2} - x_{3}
\]
and
\[
	\tilde{g}_{3, 1}(x_{1}, x_{2}, x_{3}) = g_{3, 1}(-x_{3}, x_{2}, -x_{1}) = 1 + x_{1} + x_{2} + x_{3}.
\]
Similarly, 
\begin{align*}
	f_{3, 2}(x_{1}, x_{2}, x_{3})		&= \prod_{1 \leq i < j \leq 3} (1 - x_{i}x_{j})	\\
				&= 1 - e_{2}(x_{1}, x_{2}, x_{3}) + e_{1}(x_{1}, x_{2}, x_{3})e_{3}(x_{1}, x_{2}, x_{3}) - e_{3}(x_{1}, x_{2}, x_{3})^{2},
\end{align*}
so
\[
	g_{3, 2}(x_{1}, x_{2}, x_{3}) = 1 - x_{2} + x_{1}x_{3} - x_{3}^2
\]
and
\[
	\tilde{g}_{3, 2}(x_{1}, x_{2}, x_{3}) = g_{3, 2}(-x_{3}, x_{2}, -x_{1}) = 1 - x_{2} + x_{1}x_{3} - x_{1}^2.
\]
This theorem shows that there is actually no need to compute the eigenvalues of \(\phi_{*}\) in order to determine Reidemeister numbers on \(N_{r, c}\).
The Reidemeister number of \(\phi \in \End(N_{r, c})\) can be computed using only the coefficients of the characteristic polynomial \(p_{\phi}\) of \(\phi_{1}\).
K.\;Dekimpe, S.\;Tertooy and A.\;Vargas use this idea to determine the Reidemeister spectrum of \(N_{r, 2}\).

In addition, they elaborate on which characteristic polynomials can occur.
If \(\phi \in \Aut(N_{r, c})\), then all the induced maps \(\phi_{i}\) are  automorphisms as well.
Thus, we get a map
\[
	\omega: \Aut(N_{r, c}) \to \Aut(\Z^{r}): \phi \mapsto \phi_{1}.
\]

Since \(\phi_{1}\) is an automorphism and \(\Aut(\Z^{r}) \cong \GL(r, \Z)\), it follows that \(a_{0} = \det(\phi_{1}) = \pm 1\).
Thus, for every \(\phi \in \Aut(N_{r, c})\), the characteristic polynomial of \(\phi_{1} = \omega(\phi)\) is monic of degree \(r\) with integral coefficients and with constant coefficient equal to \(\pm 1\).

Conversely, they argue that \emph{every} monic polynomial of degree \(r\) with integral coefficients and with constant coefficient equal to \(\pm 1\) occurs as the characteristic polynomial of \(\omega(\phi)\) for some \(\phi \in N_{r, c}\).
To that end, we first note that the map \(\Aut(F_{r}) \to \Aut(\Z^{r})\), defined analogously to \(\omega\), is surjective (see \cite[Theorem~N4, Section~3.5]{KarrassMagnusSolitar76}).
Since every automorphism of \(F_{r}\) induces one on \(N_{r, c}\), the map \(\omega\) is surjective as well.

Secondly, given a polynomial
\[
	p(x) = \sum_{i = 0}^{r} a_{i} x^{i}
\]
with \(a_{i} \in \Z\) for all \(i \in \{1, \ldots, r\}\), \(a_{r} = 1\) and \(a_{0} = \pm 1\), its companion matrix
\[
	C(p) := 	\begin{pmatrix}
				0		&	0		&	\ldots	&	0		&	-a_{0}	\\
				1		&	0		&	\ldots	&	0		&	-a_{1}	\\
				0		&	1		&	\ldots	&	0		&	-a_{2}	\\
				\vdots	&	\vdots	&	\ddots	&	\vdots	&	\vdots	\\
				0		&	0		&	\ldots	&	1		&	-a_{r - 1}
			\end{pmatrix} \in \Z^{r \times r}
\]
has \(p(x)\) as characteristic polynomial.
Thus, given \(p(x)\), we obtain an element of \(\Aut(\Z^{r})\), which in turns yields an automorphism of \(N_{r, c}\) via \(\omega\).

Summarised, to compute the Reidemeister spectrum of \(N_{r, c}\), it is sufficient to determine the polynomials \(\tilde{g}_{r, k}\) from \cref{theo:ReidemeisterNumbersInTermsOfCoefficientCharPol} for \(k \in \{1, \ldots, c\}\) and then plug in all the possibilities of the coefficients of a monic polynomial of degree \(r\) with integer coefficients and with constant coefficient equal to \(\pm 1\).
Therefore, we shift our focus to determining these polynomials \(\tilde{g}_{r, k}\).

However, from the perspective of symmetric polynomials, it is more logical to find the polynomials \(g_{r, k}\) from the proof of \cref{theo:ReidemeisterNumbersInTermsOfCoefficientCharPol}, namely the polynomials expressing \(R(\phi)\) in terms of the elementary symmetric polynomials (evaluated in \(\lambda_{i}\)).
Thus, in the remainder of this article, we describe a method to determine the polynomials \(g_{r, k}\) and link it to the open problem of the plethysm of Schur functions, which we mentioned in the introduction.
We would like to mention that S.\;Tertooy has determined the polynomials \(g_{r, k}\) for several values of \(r\) and \(k\) in his PhD-thesis \cite[Chapter~5]{Tertooy19}.
\section{From symmetric polynomials to plethysms of Schur functions}	\label{sec:Combinatorics}

The example we gave throughout the previous section suggests that there is a difference between how \(f_{r, k}\) changes when \(r\) varies and when \(k\) varies, both how the expression for \(f_{r, k}\) itself and how the expansion in terms of elementary symmetric polynomials changes.
The table below shows the expressions for \(f_{r, k}\) and \(g_{r, k}\) for \(1 \leq r \leq 3\) and \(1 \leq k \leq 2\).
We use \(e_{i}\) for the variables of \(g_{r, k}\) to emphasise that \(g_{r, k}\) is the expansion of \(f_{r, k}\) in terms of elementary symmetric polynomials.

\begin{table}[h]
\centering
\begin{tabularx}{\textwidth}{>{\(}l<{\)} | >{\(}l<{\)} | >{\(}l<{\)} || >{\(}l<{\)} | >{\(}X<{\)}}
		r	&	f_{r, 1}					&	g_{r, 1}							&	f_{r, 2}											&	g_{r, 2}		\\\hline
		2	&	\displaystyle\prod_{i = 1}^{2}(1 - x_{i})&	1 - e_{1} + e_{2}					&	\displaystyle\prod_{1 \leq i < j \leq 2}	(1 - x_{i}x_{j})		&	1 - e_{2}	\\
		3	&	\displaystyle\prod_{i = 1}^{3}(1 - x_{i})&	1 - e_{1} + e_{2} - e_{3}			&	\displaystyle\prod_{1 \leq i < j \leq 3}	(1 - x_{i}x_{j})		&	1 - e_{2} + e_{1}e_{3} - e_{3}^{2}	\\
		4	&	\displaystyle\prod_{i = 1}^{4}(1 - x_{i})&	1 - e_{1} + e_{2} - e_{3}	+ e_{4}	&	\displaystyle\prod_{1 \leq i < j \leq 4}	(1 - x_{i}x_{j})		&	\begin{aligned}&1 - e_{2} + e_{1}e_{3} - e_{3}^{2} - e_{4} \\&- e_{1}^{2}e_{4} + 2e_{2}e_{4} + e_{1}e_{3}e_{4} \\&- e_{4}^{2} - e_{2}e_{4}^{2} + e_{4}^{3}\end{aligned}
\end{tabularx}
\caption{Expressions for \(f_{r, k}\) and \(g_{r, k}\) for small values of \(r\) and \(k\).}
\end{table}

We see in the table that an increment in \(r\) only yields extra terms in the expansion of \(g_{r, k}\).
More precisely, the table suggests that, given \(g_{r, k}\), we can obtain \(g_{r - 1, k}\) by substituting \(e_{r} = 0\).
However, for increasing \(k\), the expression for \(f_{r, k}\) becomes more involved, as
\[
	f_{r, 3} = \prod_{\substack{1 \leq i, j \leq r \\ i \ne j}} (1 - x_{i}^{2}x_{j}) \left(\prod_{1 \leq i < j < k \leq r} (1 - x_{i}x_{j}x_{k}) \right)^{2},
\]
for instance.
In turn, complexity of the expression for \(g_{r, k}\) also increases.

To grasp this behaviour all at once for all \(r\), we, informally speaking, take the limit \(r \to \infty\).
We make this more precise and rigorous by continuing our journey into the world of symmetric functions.

\subsection{Symmetric functions}
In this section, we introduce most of the necessary concepts concerning symmetric functions.
Our exposition is loosely based on the one by R.\;Stanley \cite[\S~7]{Stanley12a}, but differs in some places.

Let \(A\) be the set of all sequences of non-negative integers whose support is finite, \ie
\[
	A := \{\alpha = (\alpha_{i})_{i \geq 1} \mid \forall i \geq 1: \alpha_{i} \in \Znonneg, \text{ \(\supp(\alpha)\) finite}\}.
\]
The group \(\Sym{\Zpos}\) acts in a natural way on \(A\):
\[
	\Sym{\Zpos} \times A \to A: (\sigma, \alpha) \mapsto \sigma \cdot \alpha := (\alpha_{\inv{\sigma}(i)})_{i \geq 1}.
\]

Let \(x = (x_{1}, \ldots, x_{n}, \ldots)\) be a list of formal variables.
Given \(\alpha \in A\), we define
\[
	x^{\alpha} := \prod_{i \geq 1} x_{i}^{\alpha_{i}}.
\]
Since \(\alpha\) has finite support, this product is a well-defined monomial.
We define the \emph{ring of formal power series in a countable infinite number of variables} to be the set of all expressions of the form
	\begin{equation}	\label{eq:expressionFormalPowerSeries}
		f = \sum_{\alpha \in A} c_{\alpha} x^{\alpha},
	\end{equation}
where \(c_{\alpha} \in \Z\) for all \(\alpha \in A\).
To make this set into a ring, we equip it with pointwise addition and with the following multiplication:
	\[
		\sum_{\alpha \in A} c_{\alpha} x^{\alpha} \cdot \sum_{\alpha \in A} d_{\alpha} x^{\alpha} := \sum_{\alpha \in A} \sum_{\substack{\beta, \gamma \in A \\ \beta + \gamma = \alpha}} (c_{\beta} + d_{\gamma}) x^{\alpha}.
	\]
	
	Since for each \(\alpha \in A\) there are only finitely many couples \((\beta, \gamma) \in A \times A\) with \(\beta + \gamma = \alpha\), this product is well defined.
We let \(\powinfZ\) denote this ring.
For \(\alpha \in A\) and \(f \in \powinfZ\) as in \eqref{eq:expressionFormalPowerSeries}, we also write \([x^{\alpha}]f := c_{\alpha}\).

	A formal power series is called a \emph{symmetric function} if, in addition, \(c_{\alpha} = c_{\sigma \cdot \alpha}\) for all \(\alpha \in A\) and \(\sigma \in \Sym{\Zpos}\).
Since any two symmetric functions can be added and multiplied and the result is again a symmetric function, the symmetric functions form a subring of \(\powinfZ\), which is denoted by \(\Lambda\).

The \emph{degree} of a monomial \(x^{\alpha}\) is given by \(\deg(x^{\alpha}) := \sum_{i \geq 1} \alpha_{i}\) and the degree of a symmetric function \(f\) is
\[
	\deg(f) := \sup\{\deg(x^{\alpha}) \mid [x^{\alpha}]f \ne 0\}.
\]
Note that, with this definition, \(\deg(f) = \infty\) is possible, and this is where our definition differs from the one by R.\;Stanley; in his definition, symmetric functions always have a finite degree.
If \(n := \deg(f) = \deg(x^{\alpha})\) for all \(\alpha \in A\) with \([x^{\alpha}]f \ne 0\), then we call \(f\) \emph{homogeneous of degree \(n\)}.
Given a symmetric function \(f = \sum_{\alpha \in A} c_{\alpha} x^{\alpha}\), we can write it as the (infinite) sum of homogeneous functions \(f_{n}\) given by
\[
	f_{n} := \sum_{\substack{\alpha \in A \\ \deg(x^{\alpha}) = n}} c_{\alpha} x^{\alpha}.
\]

To illustrate these definitions, consider the power series
\[
	f = \sum_{i} x_{i}
\]
and
\[
	g = \sum_{i < j} x_{i}x_{j}^2.
\]
Then \(f\) is symmetric, whereas \(g\) is not, since \([x_{1}x_{2}^2]g = 1\), but \([x_{1}^{2}x_{2}]g = 0\).
Both \(f\) and \(g\) are homogeneous, with \(f\) having degree \(1\) and \(g\) degree \(3\).
Their product is given by
\[
	fg = \sum_{i < j} x_{i}^{2}x_{j}^{2} + \sum_{i < j} x_{i}x_{j}^{3} + \sum_{k \ne i < j \ne k} x_{i}x_{j}^{2}x_{k}.
\]

Since we want to be able to go back from symmetric functions to symmetric polynomials, we have to construct a way to do so.
For every integer \(n \geq 0\), we define
\[
	A_{n} := \{\alpha \in A \mid \supp(\alpha) \subseteq \{1, \ldots, n\}\}.
\]
Note that \(A_{0} = \{(0,\ldots, 0, \ldots)\}\) and that \(A_{n} \subseteq A_{n + 1}\) for every \(n \geq 0\).

\begin{defin}
	Let \(n \geq 1\) be an integer.
The \emph{ring of formal power series in \(n\) variables} is the set of all power series of the form
	\[
		f = \sum_{\alpha \in A_{n}} c_{\alpha} x^{\alpha}.
	\]
	where \(c_{\alpha} \in \Z\) for all \(\alpha \in A_{n}\), equipped with the same addition and multiplication as \(\powinfZ\).
We let \(\powZ{n}\) denote this ring.
\end{defin}

The group \(\Sym{n}\) acts in a similar way on \(A_{n}\) as \(\Sym{\Zpos}\) does on \(A\):
\[
	\Sym{n} \times A_{n} \to A_{n}: (\tau, \alpha) \mapsto \tau \cdot \alpha,
\]
where
\[
	(\tau \cdot \alpha)_{i} :=	\begin{cases}
									\alpha_{\inv{\tau}(i)}	&	\mbox{if } i \leq n	\\
									\alpha_{i}				&	\mbox{if } i > n.
								\end{cases}
\]
Hence, we can also define the \emph{ring of symmetric formal power series in \(n\) variables}, which is denoted by \(\sympownZ\).
Note that we can view \(\sympolnZ\), the ring of symmetric polynomials in \(n\) variables, as a subring of \(\sympownZ\).

\begin{defin}
	Let \(n \geq 1\) be an integer.
We define the map
	\[
		\Phi_{n}: \powinfZ \to \powZ{n}: f = \sum_{\alpha \in A} c_{\alpha} x^{\alpha} \mapsto \Phi_{n}(f) := \sum_{\alpha \in A_{n}} c_{\alpha} x^{\alpha}.
	\]
\end{defin}

In other words, \(\Phi_{n}\) `forgets' all monomials that contain a variable \(x_{i}\) with \(i > n\).
We will sometimes write \(f(x_{1}, \ldots, x_{n})\) to denote \(\Phi_{n}(f)\).
Clearly, \(\Phi_{n}\) maps \(\Lambda\) into \(\sympownZ\), but for certain \(f \in \Lambda\), \(\Phi_{n}(f)\) is even an element of \(\sympolnZ\), for all \(n \geq 1\).
For instance,
\[
	\Phi_{n}\left(\sum_{i} x_{i}\right) = \sum_{i = 1}^{n} x_{i} \in \sympolnZ
\]
for all \(n \geq 1\).
However,
\[
	\Phi_{n}\left(\sum_{i, j} x_{i}^{j} \right) = \sum_{i = 1}^{n} \sum_{j} x_{i}^{j} \notin \sympolnZ.
\]

\begin{lemma}	\label{lem:CharacterisationSymmetricFunctions}
	Let \(f \in \powinfZ\).
Then \(f \in \Lambda\) if and only if there exists an \(M \geq 1\) such that \(\Phi_{n}(f) \in \sympownZ\) for all \(n \geq M\).
\end{lemma}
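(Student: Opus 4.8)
The plan is to prove both implications directly from the definitions; the forward one is essentially immediate, while the reverse one needs one small observation about how an element of $\Sym{\Zpos}$ acts on a fixed sequence of finite support.

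For the forward implication, suppose $f = \sum_{\alpha \in A} c_{\alpha} x^{\alpha} \in \Lambda$, so that $c_{\alpha} = c_{\sigma \cdot \alpha}$ for all $\sigma \in \Sym{\Zpos}$ and $\alpha \in A$. Fix $n \geq 1$. Every $\tau \in \Sym{n}$ extends to a permutation $\tilde{\tau} \in \Sym{\Zpos}$ fixing each $i > n$, and for $\alpha \in A_{n}$ one has $\tau \cdot \alpha = \tilde{\tau} \cdot \alpha \in A_{n}$. Since $\Phi_{n}(f) = \sum_{\alpha \in A_{n}} c_{\alpha} x^{\alpha}$, the coefficient of $x^{\alpha}$ in $\Phi_{n}(f)$ equals $c_{\alpha} = c_{\tilde{\tau} \cdot \alpha} = c_{\tau \cdot \alpha}$, which is the coefficient of $x^{\tau \cdot \alpha}$. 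Hence $\Phi_{n}(f) \in \sympownZ$ for every $n \geq 1$, and $M = 1$ works.

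For the reverse implication, suppose $M \geq 1$ is such that $\Phi_{n}(f) \in \sympownZ$ for all $n \geq M$; we must show $c_{\alpha} = c_{\sigma \cdot \alpha}$ for arbitrary $\alpha \in A$ and $\sigma \in \Sym{\Zpos}$. The key point is that, although $\Sym{\Zpos}$ is not generated by transpositions, the sequence $\sigma \cdot \alpha$ depends on $\sigma$ only through finitely much data: writing $S := \supp(\alpha)$, a finite set, we have $\supp(\sigma \cdot \alpha) = \sigma(S)$. Choose $n \geq M$ with $S \cup \sigma(S) \subseteq \{1, \ldots, n\}$. Since $\restr{\sigma}{S}\colon S \to \sigma(S)$ is a bijection, the complements $\{1, \ldots, n\} \setminus S$ and $\{1, \ldots, n\} \setminus \sigma(S)$ have the same cardinality, so $\restr{\sigma}{S}$ extends to a permutation $\tau \in \Sym{n}$ with $\restr{\tau}{S} = \restr{\sigma}{S}$. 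A short check, splitting indices into $\sigma(S) = \tau(S)$, the part of $\{1, \ldots, n\}$ outside $\sigma(S)$, and the indices $> n$ (where one uses that $i > n$ forces $i \notin S$ and $\inv{\sigma}(i) \notin S$, so both sides vanish), shows $\tau \cdot \alpha = \sigma \cdot \alpha$. As $\alpha \in A_{n}$ and $\supp(\tau \cdot \alpha) = \tau(S) \subseteq \{1, \ldots, n\}$, comparing the coefficients of $x^{\alpha}$ and $x^{\tau \cdot \alpha}$ in $\Phi_{n}(f) \in \sympownZ$ yields $c_{\alpha} = c_{\tau \cdot \alpha} = c_{\sigma \cdot \alpha}$. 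Since $\alpha$ and $\sigma$ were arbitrary, $f \in \Lambda$.

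The only genuinely non-routine step — and hence the one to write out with care — is the construction of $\tau \in \Sym{n}$ and the verification that $\tau \cdot \alpha = \sigma \cdot \alpha$; the rest is bookkeeping with the definitions of $\Phi_{n}$ and of the $\Sym{n}$- and $\Sym{\Zpos}$-actions. In particular one should \emph{not} try to reduce to the case where $\sigma$ is a transposition: that reduction is legitimate for the finitary symmetric group, but the group acting here is the full symmetric group on $\Zpos$, which is not generated by transpositions.
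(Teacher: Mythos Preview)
Your proof is correct and follows essentially the same route as the paper: for the nontrivial direction you pick $n \geq M$ containing both $\supp(\alpha)$ and $\sigma(\supp(\alpha))$, extend $\restr{\sigma}{\supp(\alpha)}$ to some $\tau \in \Sym{n}$, verify $\tau \cdot \alpha = \sigma \cdot \alpha$, and use the symmetry of $\Phi_{n}(f)$. The only additions relative to the paper are your explicit remark that one should not attempt a reduction to transpositions (a useful caution) and a slightly more detailed forward implication.
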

\begin{proof}
	The `only if' is straightforward by taking \(M = 1\).
So, suppose that \(f \in \powinfZ\) is such that \(\Phi_{n}(f) \in \sympownZ\) for all \(n \geq M\) for some \(M \geq 1\).
Write \(f = \sum_{\alpha \in A} c_{\alpha} x^{\alpha}\).
For every \(n \geq M\), \(\Phi_{n}(f)\) is symmetric; since
	\[
		\Phi_{n}(f) = \sum_{\beta \in A_{n}} c_{\beta} x^{\beta},
	\]
	this means that \(c_{\beta} = c_{\tau \cdot \beta}\) for all \(\beta \in A_{n}\) and \(\tau \in \Sym{n}\).
Now, let \(\alpha \in A\) and \(\sigma \in \Sym{\Zpos}\) be arbitrary.
Since \(\alpha\) has finite support, so does \(\sigma \cdot \alpha\).
Hence, there exists an \(n \geq M\) such that \(\supp(\alpha) \cup \supp(\sigma \cdot \alpha) \subseteq \{1, \ldots, n\}\).
In particular, \(\alpha \in A_{n}\).
We now construct a permutation \(\tau\) of \(\{1, \ldots, n\}\) such that \(\tau \cdot \alpha = \sigma \cdot \alpha\).
Since
	\begin{align*}
		\supp(\sigma \cdot \alpha)	&=	\{i \geq 1 \mid (\sigma \cdot \alpha)_{i} \ne 0\}	\\
									&=	\{i \geq 1 \mid \alpha_{\inv{\sigma}(i)} \ne 0\}		\\
									&=	\sigma(\supp(\alpha)),
	\end{align*}
	and thus \(\sigma(\supp(\alpha)) \subseteq \{1, \ldots, n\}\), we can define \(\tau(i) := \sigma(i)\) for every \(i \in \supp(\alpha)\) to get a bijective map \(\tau: \supp(\alpha) \to \sigma(\supp(\alpha))\).
Since both the domain and the codomain are subsets of \(\{1, \ldots, n\}\) of the same size, we can extend this map to a bijection on \(\{1, \ldots, n\}\).
This can be done in various ways, but we just fix one, which we also write as \(\tau\).
We claim that \(\sigma \cdot \alpha = \tau \cdot \alpha\).
Let \(i \geq 1\) be arbitrary.
First, suppose that \((\sigma \cdot \alpha)_{i} \ne 0\).
Then \(\inv{\sigma}(i) \in \supp(\alpha)\).
Hence, \(\tau(\inv{\sigma}(i)) = \sigma(\inv{\sigma}(i)) = i\).
Consequently, \(\inv{\tau}(i) = \inv{\sigma}(i)\) and
	\[
		(\tau \cdot \alpha)_{i} = \alpha_{\inv{\tau}(i)} = \alpha_{\inv{\sigma}(i)} = (\sigma \cdot \alpha)_{i}.
	\]
	Conversely, suppose that \((\tau \cdot \alpha)_{i} \ne 0\).
Then \(\inv{\tau}(i) \in \supp(\alpha)\).
So, \(i = \tau(\inv{\tau}(i)) = \sigma(\inv{\tau}(i))\), which again implies that \(\inv{\tau}(i) = \inv{\sigma}(i)\).
Consequently,
	\[
		(\sigma \cdot \alpha)_{i} = \alpha_{\inv{\sigma}(i)} = \alpha_{\inv{\tau}(i)} = (\tau \cdot \alpha)_{i}.
	\]
	
	Summarised, we have proven that \(\sigma \cdot \alpha\) and \(\tau \cdot \alpha\) have the same support and moreover agree on these supports.
Hence, they are equal.
	
	Since \(\alpha \in A_{n}, \tau \in \Sym{n}\) and \(\sigma \cdot \alpha = \tau \cdot \alpha\), the equalities
	\[
		c_{\alpha} = c_{\tau \cdot \alpha} = c_{\sigma \cdot \alpha}
	\]
	hold.
As \(\alpha \in A\) and \(\sigma \in \Sym{\Zpos}\) are arbitrary, we conclude that \(f\) is symmetric.
\end{proof}
\begin{lemma}	\label{lem:equalityConditionPowerSeries}
	Let \(f, g \in \powinfZ\).
Then \(f = g\) if and only if there exists an \(M \geq 1\) such that \(\Phi_{n}(f) = \Phi_{n}(g)\) for all \(n \geq M\).
\end{lemma}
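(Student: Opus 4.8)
The plan is to mimic the reduction-to-finite-support idea already used in the proof of \cref{lem:CharacterisationSymmetricFunctions}, but it is even simpler here since no symmetry has to be arranged. The `only if' direction is immediate: if \(f = g\), then \(\Phi_{n}(f) = \Phi_{n}(g)\) for every \(n \geq 1\), so one may take \(M = 1\).

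For the `if' direction, I would write \(f = \sum_{\alpha \in A} c_{\alpha} x^{\alpha}\) and \(g = \sum_{\alpha \in A} d_{\alpha} x^{\alpha}\), and assume there is an \(M \geq 1\) with \(\Phi_{n}(f) = \Phi_{n}(g)\) for all \(n \geq M\). Fix an arbitrary \(\alpha \in A\). Since \(\supp(\alpha)\) is finite, there is an integer \(n \geq M\) with \(\supp(\alpha) \subseteq \{1, \ldots, n\}\), that is, \(\alpha \in A_{n}\). By the definition of \(\Phi_{n}\), the coefficient of \(x^{\alpha}\) in \(\Phi_{n}(f)\) equals \(c_{\alpha}\) and the coefficient of \(x^{\alpha}\) in \(\Phi_{n}(g)\) equals \(d_{\alpha}\); from \(\Phi_{n}(f) = \Phi_{n}(g)\) we conclude \(c_{\alpha} = d_{\alpha}\). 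As \(\alpha \in A\) was arbitrary, \(f = g\).

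There is essentially no obstacle: the only point requiring (minor) care is the observation that \(\Phi_{n}\) does not alter the coefficient of a monomial \(x^{\alpha}\) once \(\alpha \in A_{n}\), which is immediate from the definition of \(\Phi_{n}\) as `forgetting' the monomials involving some \(x_{i}\) with \(i > n\). One could alternatively phrase the argument by applying the `only if' part to \(f - g\) and noting that a power series all of whose finite truncations vanish must itself vanish, but the direct coefficient comparison above is the cleanest route.
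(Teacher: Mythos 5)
Your proposal is correct and follows exactly the same route as the paper's proof: the `only if' direction is immediate, and for the converse you fix an arbitrary \(\alpha \in A\), use its finite support to find \(n \geq M\) with \(\alpha \in A_{n}\), and compare coefficients via \(\Phi_{n}(f) = \Phi_{n}(g)\). No gaps.
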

\begin{proof}
	The `only if' is trivial.
	So, suppose that \(f, g \in \powinfZ\) and \(M \geq 1\) are such that \(\Phi_{n}(f) = \Phi_{n}(g)\) for all \(n \geq M\).
	Write \(f = \sum_{\alpha \in A} c_{\alpha} x^{\alpha}\) and \(g = \sum_{\alpha \in A} d_{\alpha} x^{\alpha}\).
	Let \(\alpha \in A\).
	Since \(\alpha\) has finite support, there is an \(n \geq M\) such that \(\alpha \in A_{n}\).
	The equality \(\Phi_{n}(f) = \Phi_{n}(g)\) then implies that \(c_{\alpha} = d_{\alpha}\).
	As \(\alpha\) is arbitrary, the result follows.
\end{proof}

The elementary symmetric polynomials have their counterpart in the realm of symmetric functions.
To introduce the latter, we need the concept of a partition.

\begin{defin}
	Let \(n \geq 1\) be an integer.
A \emph{partition} of \(n\) is a non-increasing sequence \(\lambda = (\lambda_{1}, \ldots, \lambda_{k}, \ldots)\) of non-negative integers whose sum equals \(n\).
We write \(\lambda \vdash n\).
	
	The set of all partitions of \(n\) is denoted by \(\Par(n)\), the set of all partitions by \(\Par\).
\end{defin}

\begin{defin}
	Let \(n \geq 1\) and let \(\lambda \in \Par\) be a partition.
We define the \emph{elementary symmetric functions} as
	\[
		e_{n} := \sum_{i_{1} < \ldots < i_{n}} x_{i_{1}} \cdots x_{i_{n}}
	\]
	and
	\[
		e_{\lambda} := \prod_{i \geq 1} e_{\lambda_{i}}.
	\]
\end{defin}
Since \(\lambda\) has finite support, this product is well defined.
Also, the elementary symmetric functions are mapped to the elementary symmetric polynomials under \(\Phi_{n}\), \ie
\[
	\Phi_{n}(e_{m}) = e_{m}(x_{1}, \ldots, x_{n})
\]
for all \(1 \leq m \leq n\), and \(\Phi_{n}(e_{m}) = 0\) for \(m > n\).
Here, we see that in fact \(\Phi_{n}(e_{m}) \in \sympolnZ\).

\begin{theorem}	\label{theo:SymmetricFunctionsInTermsOfElementarySymmetricFunctions}
	Each symmetric function \(f \in \Lambda\) can be written as a series of elementary symmetric functions, \ie there exist \(c_{\lambda} \in \Z\) with \(\lambda \in \Par\) such that
	\[
		f = \sum_{\lambda \in \Par} c_{\lambda} e_{\lambda}.
	\]
\end{theorem}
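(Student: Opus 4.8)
The plan is to reduce to the classical Fundamental Theorem of Symmetric Polynomials (\cref{theo:FundamentalTheoremSymmetricPolynomials}) by arguing degree by degree and then reassembling an infinite series. First I would decompose \(f = \sum_{n \geq 0} f_{n}\) into its homogeneous components. Since \(\Sym{\Zpos}\) preserves the degree of monomials and \(f\) is symmetric, each \(f_{n}\) is again a symmetric function, and it is homogeneous of degree \(n\). It therefore suffices to prove that every homogeneous symmetric function of degree \(n\) is a \emph{finite} \(\Z\)-linear combination of the \(e_{\lambda}\) with \(\lambda \vdash n\), using coefficients that do not depend on the number of variables, and then to sum over \(n\).

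For the degree-\(n\) statement, fix \(N \geq n\) and consider \(\Phi_{N}(f_{n})\). This is a homogeneous symmetric polynomial of degree \(n\) in \(x_{1}, \ldots, x_{N}\) (a genuine polynomial, since there are only finitely many monomials of degree \(n\) in \(N\) variables), so \cref{theo:FundamentalTheoremSymmetricPolynomials} expresses it as a polynomial in \(e_{1}(x_{1}, \ldots, x_{N}), \ldots, e_{N}(x_{1}, \ldots, x_{N})\). Because it is homogeneous of degree \(n\) and each \(e_{k}\) has degree \(k\), only the monomials \(e_{\lambda}(x_{1}, \ldots, x_{N})\) with \(\lambda \vdash n\) can appear, and since \(\lambda_{1} \leq n \leq N\) none of these vanish; hence
\[
    \Phi_{N}(f_{n}) = \sum_{\lambda \vdash n} c_{\lambda}^{(N)}\, e_{\lambda}(x_{1}, \ldots, x_{N})
\]
for unique integers \(c_{\lambda}^{(N)}\). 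The key point is that \(c_{\lambda}^{(N)}\) is independent of \(N\). To see this, write the analogous identity for \(N+1\) as an equality in \(\powinfZ\) using \(\Phi_{N+1}(e_{\lambda}) = e_{\lambda}(x_{1}, \ldots, x_{N+1})\), apply \(\Phi_{N}\) to it, and use \(\Phi_{N}(\Phi_{N+1}(g)) = \Phi_{N}(g)\) together with \(\Phi_{N}(e_{\lambda}) = e_{\lambda}(x_{1}, \ldots, x_{N})\); this yields the same expansion of \(\Phi_{N}(f_{n})\) but with coefficients \(c_{\lambda}^{(N+1)}\). By the uniqueness clause of \cref{theo:FundamentalTheoremSymmetricPolynomials} (algebraic independence of \(e_{1}, \ldots, e_{N}\), hence linear independence of the distinct monomials \(e_{\lambda}(x_{1}, \ldots, x_{N})\)), we get \(c_{\lambda}^{(N)} = c_{\lambda}^{(N+1)}\). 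Call the common value \(c_{\lambda}\). Then \(\sum_{\lambda \vdash n} c_{\lambda} e_{\lambda} \in \Lambda\) has the same image under \(\Phi_{N}\) as \(f_{n}\) for all \(N \geq n\), so \cref{lem:equalityConditionPowerSeries} gives \(f_{n} = \sum_{\lambda \vdash n} c_{\lambda} e_{\lambda}\).

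Finally I would assemble the result: with \(c_{\lambda}\) defined as above for each partition \(\lambda\), we obtain \(f = \sum_{n \geq 0} f_{n} = \sum_{n \geq 0} \sum_{\lambda \vdash n} c_{\lambda} e_{\lambda} = \sum_{\lambda \in \Par} c_{\lambda} e_{\lambda}\). This last expression is a well-defined element of \(\powinfZ\): each \(e_{\lambda}\) is homogeneous of degree \(\sum_{i} \lambda_{i}\), so for a fixed \(\alpha \in A\) only the finitely many partitions of \(\deg(x^{\alpha})\) contribute to the coefficient of \(x^{\alpha}\), which is therefore a finite integer sum; and it agrees with \(f\) because it has the same homogeneous component in every degree. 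The main obstacle I expect is precisely the \(N\)-independence of the coefficients: the Fundamental Theorem is a statement in a fixed number of variables, and one must push it carefully through the maps \(\Phi_{N}\), using their mutual compatibility and compatibility with the \(e_{\lambda}\), to upgrade an \(N\)-dependent expansion to a genuine stable one; the rest is routine bookkeeping with homogeneous components.
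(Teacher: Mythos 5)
Your proof is correct, and while it shares the paper's outer skeleton --- split \(f\) into homogeneous components \(f_{n}\), handle each degree separately, and reassemble the (well-defined, since degrees separate the partitions) infinite series --- it differs in how the degree-\(n\) case is settled. The paper simply cites the symmetric-\emph{function} version of the fundamental theorem (\cite[7.4.4~Theorem]{Stanley12a}), which directly states that a homogeneous symmetric function of degree \(n\) is a \(\Z\)-combination of the \(e_{\lambda}\) with \(\lambda \vdash n\). You instead derive that statement from the finite-variable \cref{theo:FundamentalTheoremSymmetricPolynomials} already present in the paper, via a stability argument: expand \(\Phi_{N}(f_{n})\) for each \(N \geq n\), use homogeneity to restrict to \(\lambda \vdash n\), use the algebraic independence of \(e_{1}, \ldots, e_{N}\) together with the compatibility \(\Phi_{N} \circ \Phi_{N+1} = \Phi_{N}\) and \(\Phi_{N}(e_{\lambda}) = e_{\lambda}(x_{1}, \ldots, x_{N})\) to see the coefficients stabilise, and then invoke \cref{lem:equalityConditionPowerSeries}. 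This is more work but buys self-containedness: it leans only on the classical polynomial theorem and on the paper's own machinery for passing between \(\Lambda\) and \(\sympownZ\), and it is exactly the inverse-limit viewpoint that the citation to Stanley hides. All the delicate points --- that \(\Phi_{N}(f_{n})\) is a genuine polynomial, that no \(e_{\lambda}\) with \(\lambda \vdash n\) vanishes for \(N \geq n\), and that uniqueness in \cref{theo:FundamentalTheoremSymmetricPolynomials} forces \(c_{\lambda}^{(N)} = c_{\lambda}^{(N+1)}\) --- are handled correctly.
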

\begin{proof}

	Write \(f = \sum_{n \geq 1} f_{n}\) in its homogeneous parts.
In \cite[7.4.4.~Theorem]{Stanley12a}, it is proven that each \(f_{n}\) can be written as
	\[
		f_{n} = \sum_{\lambda \in \Par(n)} c_{\lambda} e_{\lambda}
	\]
	for some \(c_{\lambda} \in \Z\).
By combining all of these expressions, we obtain the series for \(f\).
\end{proof}

We next properly define the notion of plethysm.
We remark that there are other equivalent ways of defining the plethysm of two symmetric functions, see \eg \cite[\S~2]{LoehrRemmel11} and \cite[\nopp I.8]{MacDonald95}.
The definition below suits our purposes the best.

\begin{defin}
	Let \(f, g\) be two symmetric functions.
Suppose that \(g = \sum_{i \geq 1} x^{\alpha_{i}}\) where \(\alpha_{i} \in A\) for each \(i \geq 1\).
The \emph{plethysm \(f[g]\)} is then defined as
	\[
		f[g] := f(x^{\alpha_{1}}, x^{\alpha_{2}}, \ldots).
	\]
\end{defin}

It is important here to note that we write \(g\) as a sum of \emph{monomials}, so that one monomial can occur several times in this summation.
For instance, 
\begin{align*}
	e_{2}[2e_{1}]	&=	e_{2}\left[x_{1} + x_{1} + x_{2} + x_{2} + \ldots\right]	\\
					&=	e_{2}(x_{1}, x_{1}, x_{2}, x_{2}, \ldots)	\\
					&=	\sum_{i \geq 1} x_{i}^{2} + 4 \sum_{i < j} x_{i}x_{j},
\end{align*}
whereas \(e_{2}[e_{1}]\) is just \(e_{2}\).
Thus, \(e_{2}[2e_{1}] \ne 2e_{2}[e_{1}]\).

\subsection{From symmetric polynomials to symmetric functions}	\label{subsec:FromSymmetricPolynomialsToSymmetricFunctions}
The polynomials \(f_{r, k}\) from \cref{theo:ReidemeisterNumberSymmetricPolynomialEigenvalues} are often given as a product, \eg
\[
	f_{r, 2} = \prod_{1 \leq i < j \leq r} (1 - x_{i} x_{j}).
\]

To move from symmetric polynomials to symmetric functions, we want to make sense of expressions of the form
\[
	\prod_{i < j} (1 - x_{i} x_{j}).
\]
Intuitively, we do so by working out the product distributively, but choosing in all but finitely many factors the term \(1\).
More rigorously, we proceed as follows:

\begin{defin}
	Let \(\{x^{\alpha_{i}}\}_{i \geq 1}\) be a collection of monomials.
We define
	\[
		\prod_{i \geq 1} (1 - x^{\alpha_{i}}) := 1 + \sum_{k = 1}^{\infty} (-1)^{k} \sum_{i_{1} < \ldots < i_{k}} x^{\alpha_{i_{1}}} \ldots x^{\alpha_{i_{k}}},
	\]
	provided all the latter infinite sums are well defined.
If all the infinite sums are well defined, we say the product \emph{converges}.
\end{defin}

For example, the product
\[
	\prod_{i} (1 - x_{i})
\]
equals
\[
	1 + \sum_{k = 1}^{\infty} (-1)^{k} \sum_{i_{1} < \ldots < i_{k}} x_{i_{1}} \ldots x_{i_{k}} = 1 + \sum_{k = 1}^{\infty} (-1)^{k} e_{k}.
\]

On the other hand, the product
\[
	\prod_{i} (1 - x_{1})
\]
is not defined, for the sum for \(k = 1\) equals \(\sum_{i_{1}} x_{1}\).

\begin{prop}	\label{prop:Product1-MonomialsEqualsAlternatingPlethysmElementary}
	Let \(\{x^{\alpha_{i}}\}_{i \geq 1}\) be a collection of monomials such that
	\[
		\prod_{i \geq 1} (1 - x^{\alpha_{i}})
	\]
	converges and
	\[
		g := \sum_{i \geq 1} x^{\alpha_{i}}
	\]
	is well defined.
Suppose both are symmetric as well.
Then
	\[
		\prod_{i \geq 1} (1 - x^{\alpha_{i}}) = \sum_{n = 0}^{\infty} (-1)^{n} e_{n}[g].
	\]
\end{prop}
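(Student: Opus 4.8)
The plan is to unwind both sides to literally the same monomial expansion. First I would invoke the definition of the infinite product to write
\[
	\prod_{i \geq 1} (1 - x^{\alpha_{i}}) = 1 + \sum_{k = 1}^{\infty} (-1)^{k} S_{k}, \qquad S_{k} := \sum_{i_{1} < \ldots < i_{k}} x^{\alpha_{i_{1}}} \cdots x^{\alpha_{i_{k}}},
\]
where the assumption that the product converges is precisely the statement that each \(S_{k}\) is a well-defined element of \(\powinfZ\). Setting \(S_{0} := 1\) (the empty product), the left-hand side becomes \(\sum_{n \geq 0} (-1)^{n} S_{n}\).

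Second, I would compute \(e_{n}[g]\) directly from the definition of plethysm. Since \(g\) is symmetric and is presented as the sum of monomials \(g = \sum_{i \geq 1} x^{\alpha_{i}}\), we have \(e_{n}[g] = e_{n}(x^{\alpha_{1}}, x^{\alpha_{2}}, \ldots)\), i.e.\ the symmetric function \(e_{n} = \sum_{j_{1} < \ldots < j_{n}} x_{j_{1}} \cdots x_{j_{n}}\) with each variable \(x_{j}\) replaced by the monomial \(x^{\alpha_{j}}\). This substitution yields exactly \(e_{n}[g] = S_{n}\) for every \(n \geq 1\), and \(e_{0}[g] = 1 = S_{0}\). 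In particular, the well-definedness of \(e_{n}[g]\) is the same statement as the well-definedness of \(S_{n}\), which is where convergence of the product enters; the hypothesis that \(g\) is symmetric is what makes the plethysms \(e_{n}[g]\) legitimate in the first place, and together with symmetry of the product it guarantees that both sides lie in \(\Lambda\).

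With these two computations the identity is immediate, since both sides equal \(\sum_{n \geq 0} (-1)^{n} S_{n}\) term by term. The only point that still needs a short argument is that \(\sum_{n \geq 0} (-1)^{n} e_{n}[g]\) is a well-defined power series, that is, for each monomial \(x^{\beta}\) only finitely many \(n\) contribute. Here I would use that \(g\) is well-defined: the monomial \(1 = x^{0}\) occurs in \(g\) with finite multiplicity, so only finitely many of the \(\alpha_{i}\) equal \(0\); combined with a degree count — a product \(x^{\alpha_{i_{1}}} \cdots x^{\alpha_{i_{n}}}\) equal to \(x^{\beta}\) has at most \(\deg(x^{\beta})\) factors of positive degree — this bounds any \(n\) with \([x^{\beta}] S_{n} \neq 0\) in terms of \(\deg(x^{\beta})\) and the (finite) number of zero \(\alpha_{i}\). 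Hence \([x^{\beta}]\) of the right-hand side is a finite alternating sum, and it agrees with \([x^{\beta}]\) of the left-hand side by construction.

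I do not expect a genuine obstacle here: the whole content is recognising that ``expand the product distributively, choosing the term \(1\) in all but finitely many factors, and group the result by how many factors \(x^{\alpha_{i}}\) are selected'' is exactly the plethysm \(e_{n}[g]\) of the \(n\)-th elementary symmetric function. The only place demanding a moment's care is the well-definedness of the alternating sum on the right, which is handled by the degree bound above.
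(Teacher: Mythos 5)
Your proposal is correct and follows essentially the same route as the paper: both arguments simply unfold the definition of the infinite product and of the plethysm \(e_{n}[g]\) and observe that the resulting monomial expansions coincide term by term. Your additional degree-counting remark on the well-definedness of the alternating sum \(\sum_{n}(-1)^{n}e_{n}[g]\) is a welcome extra precision, but it does not change the nature of the argument.
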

\begin{proof}
	On both sides, we determine which products of monomials occur.
Let \(k \geq 1\) and \(1 \leq i_{1} <  \ldots < i_{k}\) be arbitrary.
On the left-hand side, we get the term \((-1)^{k} \prod_{j = 1}^{k} x^{\alpha_{i_{j}}}\).
This same term occurs on the right-hand side in the expansion of \((-1)^{k}e_{k}[g]\), since \(x_{i_{1}} \cdots x_{i_{k}}\) is a monomial occurring in \(e_{k}\) and the plethysm \(e_{k}[g]\) substitutes \(x_{i_{j}} = x^{\alpha_{i_{j}}}\) for \(j \in \{1, \ldots, k\}\).
	
	Conversely, let \(n \geq 1\) be arbitrary and consider \((-1)^{n} e_{n}[g]\).
Every term in this plethysm is of the form \((-1)^{n}x^{\alpha_{i_{1}}} \cdots x^{\alpha_{i_{n}}}\) for some \(1 \leq i_{1} < \ldots < i_{n}\), which clearly occurs on the left-hand side as well.
\end{proof}

We construct an infinite analogue of the Hall basis.
Let \(\ff_{\infty}\) be the free Lie algebra of countable infinite rank and suppose it is freely generated by the elements \(X_{1}, \ldots, X_{n}, \ldots\).
We construct a totally ordered set, which we will call a \emph{Hall set} \(\HHinf{}\), in a similar way as we constructed the Hall basis of \(\ff_{r}\), namely in an inductive way as \(\HHinf{} = \bigcup_{n = 1}^{\infty} \HHinf{n}\):
	\begin{itemize}
		\item \(\HHinf{1} := \{X_{1}, \ldots, X_{n}, \ldots\}\) and \(X_{1} < \ldots < X_{n} < \ldots\)
		\item For \(n \geq 2\), assume that \(\HHinf{1}\) up to \(\HHinf{n - 1}\) have been constructed and that \(\bigcup_{k = 1}^{n - 1} \HHinf{k}\) has been given a total order.
Then \(\HHinf{n}\) is defined as the set of all Lie brackets \([Y, Z]\) with \(Y \in \HHinf{k}\) and \(Z \in \HHinf{l}\) such that
		\begin{itemize}
			\item \(k + l = n\),
			\item \(Y < Z\),
			\item if \(Z = [Z_{1}, Z_{2}]\) for some \(Z_{i} \in \HHinf{k_{i}}\), then \(Z_{1} \leq Y\).
		\end{itemize}
		\item Finally, we extend the order to \(\bigcup_{k = 1}^{n} \HHinf{k}\) by picking any total order of \(\HHinf{n}\) and imposing that \(X < Y\) for all \(X \in \HHinf{k}\) and \(Y \in \HHinf{n}\) with \(k < n\).
	\end{itemize}

It is clear that, by construction, a Hall set on \(\ff_{\infty}\) projects down to a Hall basis of \(\ff_{r}\) for every \(r \geq 2\).
We also extend the definition of the map \(\eta\).
Let \(\M := \{x^{\alpha} \mid \alpha \in A\}\) be the set of all monomials on the variables \(\{x_{i}\}_{i \geq 1}\).
Given a Hall set \(\HHinf{}\) on \(\ff_{\infty}\), we define the map \(\eta: \HHinf{} \to \M\) inductively.
We put \(\eta(X_{i}) := x_{i}\) for all \(i \geq 1\), which defines \(\eta\) on \(\HHinf{1}\).
Now, suppose that \(\eta\) is defined on \(\bigcup_{i = 1}^{n - 1}\HHinf{i}\).
Let \(X \in \HHinf{n}\).
Then \(X = [Y, Z]\) for some \(Y \in \HHinf{k}, Z \in \HHinf{l}\) where \(k + l = n\).
We then define
\[
	\eta(X) := \eta(Y) \eta(Z).
\]

\begin{theorem}	\label{theo:LinkFkandfrk}
	Let \(k \geq 1\) be arbitrary.
Define
	\[
		F_{k} := \sum_{X \in \HHinf{k}} \eta(X).
	\]
	and
	\[
		f_{k} := \sum_{i = 0}^{\infty} (-1)^{i}e_{i}[F_{k}],
	\]
	Then, for all integers \(r \geq 2\)
	\[
		\Phi_{r}(f_{k}) = f_{r, k}.
	\]
\end{theorem}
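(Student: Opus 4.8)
The plan is to introduce the infinite product $P_k := \prod_{X \in \HHinf{k}}(1 - \eta(X))$ and to establish two facts: that $P_k$ is a well-defined symmetric function equal to $f_k$, and that $\Phi_r(P_k) = f_{r, k}$ for every $r \geq 2$; together these give $\Phi_r(f_k) = \Phi_r(P_k) = f_{r, k}$. First I would dispose of well-definedness. Each $X \in \HHinf{k}$ is an iterated bracket of exactly $k$ generators counted with multiplicity, so $\eta(X)$ is a monomial of degree $k$; for a fixed monomial $m$ of degree $k$, the generators used by any $X$ with $\eta(X) = m$ must lie among the finitely many variables of $m$, and the degree-$k$ part of a free Lie algebra on finitely many generators is finite-dimensional, so only finitely many such $X$ exist. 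Hence $F_k = \sum_{X \in \HHinf{k}} \eta(X)$ is a well-defined element of $\powinfZ$, homogeneous of degree $k$, and the same finiteness applied to products $\eta(X_1)\cdots\eta(X_j)$ shows that every inner sum in the definition of $P_k$ is well-defined, so $P_k$ converges. Since each $e_i[F_k]$ is homogeneous of degree $ik$, the series defining $f_k$ has at most one nonzero summand in each degree, so $f_k$ is well-defined as soon as $F_k$ is known to be a symmetric function (so that the plethysms make sense).

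Next I would compute $\Phi_n(P_k)$ for $n \geq 2$. The operator $\Phi_n$ is a ring homomorphism that deletes every monomial containing a variable $x_i$ with $i > n$, and it commutes with the convergent sums occurring in $P_k$; a monomial $\eta(X_1)\cdots\eta(X_j)$ survives if and only if every $\eta(X_i)$ has support in $\{1, \dots, n\}$, which happens precisely when $X_i$ is built only from $X_1, \dots, X_n$. By construction of $\HHinf{}$, the set of such degree-$k$ Hall elements is exactly the degree-$k$ part $\HH_k$ of the Hall basis of $\ff_n$ obtained by projecting $\HHinf{}$, so
\[
	\Phi_n(P_k) = \prod_{X \in \HH_k}(1 - \eta(X)) = f_{n, k},
\]
where we use that $f_{n, k}$ is independent of the chosen Hall basis (\cref{sec:FromReidemeisterToPolynomials}). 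The same argument gives $\Phi_n(F_k) = \sum_{X \in \HH_k}\eta(X)$, which is a symmetric polynomial because the multiset $\{\eta(X) \mid X \in \HH_k\}$ is $\Sym{n}$-invariant, again shown in \cref{sec:FromReidemeisterToPolynomials}. Hence, by \cref{lem:CharacterisationSymmetricFunctions} with $M = 2$, both $F_k$ and $P_k$ are symmetric functions.

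At this point all hypotheses of \cref{prop:Product1-MonomialsEqualsAlternatingPlethysmElementary} are satisfied by the collection of monomials $\{\eta(X)\}_{X \in \HHinf{k}}$, and it yields
\[
	P_k = \sum_{i = 0}^{\infty} (-1)^i e_i[F_k] = f_k.
\]
Combined with $\Phi_r(P_k) = f_{r, k}$ from the previous paragraph, this finishes the proof.

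I expect there to be no deep obstacle: the content lies in setting up the infinite-variable formalism carefully enough to invoke \cref{prop:Product1-MonomialsEqualsAlternatingPlethysmElementary}. The one point that needs genuine care is the interaction of $\Phi_n$ with the infinite product and with the plethysm operators, which ultimately rests on the observation that a monomial supported on $\{1, \dots, n\}$ can only be assembled from factors $\eta(X)$ that are themselves supported on $\{1, \dots, n\}$, i.e.\ from Hall elements that do not project to zero in $\ff_n$.
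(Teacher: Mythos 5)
Your proposal is correct and follows essentially the same route as the paper's proof: both identify \(f_{k}\) with the infinite product \(\prod_{X \in \HHinf{k}}(1 - \eta(X))\) via \cref{prop:Product1-MonomialsEqualsAlternatingPlethysmElementary}, use the fact that the Hall set projects to a Hall basis of \(\ff_{r}\) to compute \(\Phi_{r}\) of that product, and invoke \cref{lem:CharacterisationSymmetricFunctions} together with the symmetry of \(\{\eta(X) \mid X \in \HH_{k}\}\) established in \cref{sec:FromReidemeisterToPolynomials} to certify that \(F_{k}\) is a symmetric function. The only difference is that you spell out the convergence and well-definedness checks that the paper leaves implicit.
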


\begin{proof}
	Fix \(r \geq 2\).
Let \(\HHinf{}\) be a Hall set on \(\ff_{\infty}\).
In the discussion following \cref{lem:eigenvaluesphii}, we argued that, if \(\HH^{r}\) is a Hall basis of \(\ff_{r}\), the set
\[
	\{\eta(X) \mid X \in \HH^{r}_{k}\}
\]
is symmetric (counted with multiplicity).
Consequently, the polynomial \(F_{r, k} := \sum_{X \in \HH^{r}_{k}} \eta(X)\) is also symmetric.
By construction of \(\HHinf{}\), \(\Phi_{r}(F_{k}) = F_{r, k}\).
Since \(F_{r, k}\) is symmetric for each \(r\), \cref{lem:CharacterisationSymmetricFunctions} implies that \(F_{k}\) is a symmetric function.

By \cref{prop:Product1-MonomialsEqualsAlternatingPlethysmElementary},
\[
	f_{k} = \prod_{X \in \HHinf{k}} (1 - \eta(X))
\]
and therefore
\[
	\Phi_{r}(f_{k}) = \Phi_{r}\left(\prod_{X \in \HHinf{k}} (1 - \eta(X))\right) = \prod_{X \in \HH^{r}_{k}} (1 - \eta(X)) = f_{r, k}
\]
by definition of \(\Phi_{r}\).
\end{proof}

Thus, combined with \cref{theo:ReidemeisterNumberSymmetricPolynomialEigenvalues}, we see that, for all \(r \geq 2\) and \(c \geq 2\), all information about Reidemeister numbers on \(N_{r, c}\) is contained in the \(f_{k}\) for \(k \in \{1, \ldots, c\}\).
We also start to see how the Reidemeister spectrum of \(N_{r, c}\) and plethysms are related.

Recall that our focus is to determine the polynomials \(g_{r, k}\), which express the polynomials \(f_{r, k}\) in terms of elementary symmetric polynomials.
By \cref{theo:SymmetricFunctionsInTermsOfElementarySymmetricFunctions}, 
\[
	f_{k} = \sum_{\lambda \in \Par} c_{\lambda, k} e_{\lambda}
\]
for some \(c_{\lambda, k} \in \Z\).
The polynomial \(g_{r, k}\) can then be derived from this equality by applying \(\Phi_{r}\) to both sides.
In \cref{sec:plethysmsBackToSpectra}, we elaborate more on this and explain the behaviour we observed in the table at the beginning of this section.

\subsection{Schur functions}
The next step in our journey is to show how Schur functions come into play in the expressions from \cref{theo:LinkFkandfrk}.
We start by formally introducing Schur functions and stating the major open problem regarding plethysms of Schur functions.
We opt for a more combinatorial way to do so, but there are many other ways.
All of them might seem unmotivated when first encountering Schur functions.
We have, however, tried to highlight their importance and/or naturality already in the introduction and will continue to do so further down this article.
For more information and motivation on Schur functions, we refer the reader to \cite[7.10 \& 7.15]{Stanley12a}.

We need to introduce four concepts before we can define Schur functions.
Let \(\lambda\) and \(\mu\) be two partitions.

\begin{itemize}
	\item The \emph{Young diagram} of \(\lambda\) is a graphical representation of \(\lambda\) consisting of a left-justified array of boxes below each other, where the \(i\)th row contains \(\lambda_{i}\) boxes.
We index the rows as in a matrix, so from top to bottom.

	For example, the Young diagram of \((4, 2, 1)\) is depicted in \cref{fig:YoungDiagram421}.
\begin{figure*}[h]
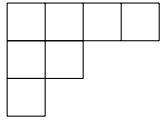

	\centering
	\drawyoungdiagram{4, 2, 1}
	\caption{Young diagram of the partition \((4, 2, 1)\)}
	\label{fig:YoungDiagram421}
\end{figure*}

	\item The \emph{monomial symmetric function} \(m_{\lambda}\) is given by
\[
	m_{\lambda} := \sum_{\substack{\alpha \in A \\ \exists \sigma \in \Sym{\Zpos}: \sigma \cdot \alpha = \lambda}} x^{\alpha}.
\]
	For example,
	\[
		m_{(4, 2, 1)} = \sum_{i \ne j \ne k \ne i} x_{i}^{4}x_{j}^{2}x_{k}.
	\]

	\item A \emph{semi-standard Young tableau} (SSYT) of type \(\lambda\) and weight \(\mu\) is a Young diagram of shape \(\lambda\) in which the number \(i\) is filled in \(\mu_{i}\) times, in such a way that the rows are non-decreasing and the columns are (strictly) increasing.
	
	For example, \cref{fig:SSYT421} depicts two SSYTs of type \((4, 2, 1)\) and weight \((3, 2, 2)\).
	
\begin{figure*}[h]
	\centering
	\begin{tikzpicture}[scale=0.5]
		\foreach \i [count=\j] in {4, 2, 1}{
			\draw (0, -\j) grid ($({\i}, {-\j + 1})$);
		}
		\node at (0.5, -0.5) {\(1\)};
		\node at (1.5, -0.5) {\(1\)};
		\node at (2.5, -0.5) {\(1\)};
		\node at (3.5, -0.5) {\(3\)};
		\node at (0.5, -1.5) {\(2\)};
		\node at (1.5, -1.5) {\(2\)};
		\node at (0.5, -2.5) {\(3\)};
	\end{tikzpicture}
	\hspace{1cm}
	\begin{tikzpicture}[scale=0.5]
		\foreach \i [count=\j] in {4, 2, 1}{
			\draw (0, -\j) grid ($({\i}, {-\j + 1})$);
		}
		\node at (0.5, -0.5) {\(1\)};
		\node at (1.5, -0.5) {\(1\)};
		\node at (2.5, -0.5) {\(1\)};
		\node at (3.5, -0.5) {\(2\)};
		\node at (0.5, -1.5) {\(2\)};
		\node at (1.5, -1.5) {\(3\)};
		\node at (0.5, -2.5) {\(3\)};
	\end{tikzpicture}
	\caption{Semi-standard Young tableaux of type \((4, 2, 1)\) and weight \((3, 2, 2)\)}
	\label{fig:SSYT421}
\end{figure*}
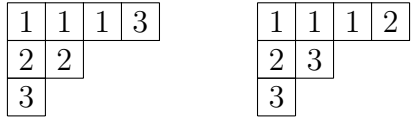
	
	\item The \emph{Kostka number \(K_{\lambda \mu}\)} is the number of SSYTs of type \(\lambda\) and weight \(\mu\).
	
	For example, for \(\lambda = (4, 2, 1)\) and \(\mu = (3, 3, 2)\), \(K_{\lambda \mu} = 2\).
\end{itemize}

Finally, the \emph{Schur function \(s_{\lambda}\)} is defined as
\[
	s_{\lambda} := \sum_{\mu \in \Par} K_{\lambda \mu} m_{\mu}.
\]
To finish the example of \(\lambda = (4, 2, 1)\), the associated Schur function is
\begin{align*}
	s_{(4, 2, 1)}	&=	35m_{1, 1, 1, 1, 1, 1, 1} + 20m_{2, 1, 1, 1, 1, 1} + 11m_{2, 2, 1, 1, 1} + 6m_{2, 2, 2, 1} + 8m_{3, 1, 1, 1, 1} 	\\
					& \quad + 4m_{3, 2, 1, 1} + 2m_{3, 2, 2} + m_{3, 3, 1} + 2m_{4, 1, 1, 1} + m_{4, 2, 1}.
\end{align*}

\begin{theorem}
	Each symmetric function \(f \in \Lambda\) can be written as a series of Schur functions, \ie there exist \(c_{\lambda}	\in \Z\) with \(\lambda \in \Par\) such that
	\[
		f = \sum_{\lambda \in \Par} c_{\lambda} s_{\lambda}.
	\]
\end{theorem}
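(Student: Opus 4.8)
The plan is to reduce everything to the homogeneous case and then invoke the standard fact that the Schur functions form an \emph{integral} basis of each graded piece of \(\Lambda\). First I would write \(f = \sum_{n \geq 0} f_{n}\) as the (possibly infinite) sum of its homogeneous parts, exactly as described just after the definition of degree; each \(f_{n}\) is again a symmetric function, now homogeneous of degree \(n\), and hence lies in the \(\Z\)-module \(\Lambda_{n}\) of symmetric functions that are homogeneous of degree \(n\). This module is free of finite rank: the monomial symmetric functions \(\{m_{\mu} \mid \mu \vdash n\}\) form a \(\Z\)-basis of \(\Lambda_{n}\), so \(\rk_{\Z} \Lambda_{n} = \size{\Par(n)}\).

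Next I would recall from \cref{theo:SymmetricFunctionsInTermsOfElementarySymmetricFunctions} — more precisely from its proof, which quotes \cite[7.4.4.~Theorem]{Stanley12a} — that \(f_{n} = \sum_{\mu \vdash n} c_{\mu} e_{\mu}\) for suitable \(c_{\mu} \in \Z\), a \emph{finite} sum. So it suffices to show that every \(e_{\mu}\) with \(\mu \vdash n\) is a \(\Z\)-linear combination of Schur functions \(s_{\nu}\) with \(\nu \vdash n\). Since \(e_{m} = s_{1^{m}}\) and \(e_{\mu} = \prod_{i \geq 1} e_{\mu_{i}} = \prod_{i \geq 1} s_{1^{\mu_{i}}}\), this follows from the classical fact that a product of Schur functions expands into Schur functions with non-negative integer (Littlewood--Richardson) coefficients; equivalently, one may invoke the dual Jacobi--Trudi identity, which exhibits the transition matrix between \(\{e_{\mu} \mid \mu \vdash n\}\) and \(\{s_{\nu} \mid \nu \vdash n\}\) as integral and unitriangular with respect to dominance order on the conjugate partitions, hence invertible over \(\Z\). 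Either way, \(\{s_{\nu} \mid \nu \vdash n\}\) is a \(\Z\)-basis of \(\Lambda_{n}\), and so \(f_{n} = \sum_{\nu \vdash n} c_{\nu}' s_{\nu}\) for some \(c_{\nu}' \in \Z\).

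Combining these over all \(n\) gives \(f = \sum_{\nu \in \Par} c_{\nu}' s_{\nu}\). This is a legitimate element of \(\powinfZ\): for a fixed \(\alpha \in A\), only the homogeneous component of degree \(\deg(x^{\alpha})\) can contribute to the coefficient \([x^{\alpha}]f\), and that contribution is the finite sum coming from \(f_{\deg(x^{\alpha})}\), so no infinite summation of coefficients ever occurs. The main obstacle is the integrality assertion in the previous paragraph — that the \(s_{\nu}\) span the \emph{same} \(\Z\)-module as the \(e_{\mu}\), not merely the same \(\Q\)-vector space; everything else is routine bookkeeping with homogeneous components. I would dispatch this either by citing the Littlewood--Richardson rule (Schur-positivity of products of Schur functions, which is far more than needed) or, to keep the section self-contained, by citing the dual Jacobi--Trudi / Nägelsbach--Kostka identity from \cite[\S~7]{Stanley12a}.
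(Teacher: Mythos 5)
Your proposal is correct and follows the same route as the paper: decompose \(f\) into its homogeneous parts \(f_{n}\) and invoke the graded fact that each \(f_{n}\) is an integral linear combination of Schur functions indexed by partitions of \(n\), then reassemble (the well-definedness check for the infinite sum is a nice touch, though routine). The only difference is in how that graded fact is justified: the paper cites \cite[7.10.6~Corollary]{Stanley12a} directly, whereas you re-derive it from the expansion in elementary symmetric functions via the Littlewood--Richardson rule or the unitriangularity of the dual Jacobi--Trudi transition matrix --- an equivalent, slightly more self-contained justification.
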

\begin{proof}
	We proceed in a similar fashion as we did for the elementary symmetric functions.
Write \(f = \sum_{n \geq 1} f_{n}\) in its homogeneous parts.
In \cite[7.10.6~Corollary]{Stanley12a}, it is proven that each \(f_{n}\) can be written as
	\[
		f_{n} = \sum_{\lambda \in \Par(n)} c_{\lambda} s_{\lambda}
	\]
	for some \(c_{\lambda} \in \Z\).
By combining all of these expressions, we obtain the series for \(f\).
\end{proof}

For instance, \(e_{k} = s_{1^{k}}\) for all \(k \geq 1\), where \(1^{k}\) is a short-hand notation for the partition consisting of \(k\) ones.
This equality can be verified using the definition of both symmetric functions.
We refer to the sum \(\sum_{\lambda \in \Par} c_{\lambda} s_{\lambda}\) as the \emph{Schur expansion} of \(f\).

If all the \(c_{\lambda}\) are non-negative, we say that \(f\) is \emph{Schur positive}.
The following then is a classical result:
\begin{theorem}[{\cite[A.2.7~ Theorem]{Stanley12a}}] \label{theo:PlethysmOfSchurPositiveIsSchurPositive}
	Let \(f, g\) be two Schur positive symmetric functions.
Then \(f[g]\) is again Schur positive.
\end{theorem}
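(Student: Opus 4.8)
The plan is to route the argument through the representation theory of \(\GL(n, \C)\), using the standard dictionary in which a Schur-positive symmetric function, read in \(n\) variables, is the character of an \emph{honest} (rather than merely virtual) polynomial \(\GL(n, \C)\)-module, and plethysm corresponds to composition of Schur functors. The point is that applying a polynomial functor to a polynomial representation again yields a polynomial representation, so no cancellation of irreducibles can occur. As a first reduction, since \(f \mapsto f[g]\) is additive in the first argument --- immediate from the definition of plethysm --- and a sum of Schur-positive functions is Schur positive, it suffices to treat \(f = s_{\lambda}\) for a single partition \(\lambda\); this disposes of \(f\) of arbitrary (even infinite) degree in one stroke. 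If one prefers to keep all modules finite-dimensional, one may additionally reduce \(g\) to being homogeneous of some degree \(m \geq 1\) by splitting \(g\) into its homogeneous components, invoking the comultiplication identity \(s_{\lambda}[g' + g''] = \sum_{\mu, \nu} c^{\lambda}_{\mu\nu}\, s_{\mu}[g']\, s_{\nu}[g'']\) (the Littlewood--Richardson coefficients \(c^{\lambda}_{\mu\nu}\) being non-negative), and using that a product of Schur-positive functions is Schur positive.

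For the core step, write \(g = \sum_{\mu} b_{\mu} s_{\mu}\) with every \(b_{\mu} \in \Znonneg\), fix \(n \geq 1\), and set \(W := \bigoplus_{\mu} (S^{\mu} \C^{n})^{\oplus b_{\mu}}\). This is a polynomial \(\GL(n, \C)\)-module --- infinite-dimensional in general, but with finite-dimensional graded pieces, since only finitely many partitions of each size occur --- and by construction a diagonal matrix with eigenvalues \(x_{1}, \ldots, x_{n}\) acts on a weight basis of \(W\) whose multiset of weights is exactly the multiset of monomials of \(\Phi_{n}(g) = g(x_{1}, \ldots, x_{n})\); equivalently, the character of \(W\), as a function of the eigenvalues of the acting matrix, equals \(g(x_{1}, \ldots, x_{n})\). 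Applying the Schur functor \(S^{\lambda}\) from the introduction (for which \(S^{\lambda} V\) is irreducible) to \(W\) produces another polynomial \(\GL(n, \C)\)-module, again with finite-dimensional graded pieces, whose character is obtained by substituting the weights of \(W\) --- that is, the monomials of \(g(x_{1}, \ldots, x_{n})\) --- into \(s_{\lambda}\). By the definition of plethysm, this character is precisely \(s_{\lambda}[g](x_{1}, \ldots, x_{n}) = \Phi_{n}(s_{\lambda}[g])\); being the character of an honest polynomial \(\GL(n, \C)\)-module, it is, in each degree, a non-negative integer combination of the Schur polynomials \(s_{\nu}(x_{1}, \ldots, x_{n})\).

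It then remains to let \(n\) grow. Fix a degree \(e \geq 0\). For every \(n \geq e\) the degree-\(e\) part of \(\Phi_{n}(s_{\lambda}[g])\) is, by the core step, a non-negative integer combination of the \(s_{\nu}(x_{1}, \ldots, x_{n})\) with \(\nu \vdash e\); and for \(n \geq e\) the Schur polynomials \(s_{\nu}(x_{1}, \ldots, x_{n})\) with \(\nu \vdash e\) remain linearly independent (each such \(\nu\) has at most \(e \leq n\) parts), so the non-negativity passes to the coefficients of \(s_{\lambda}[g]\) itself (\cf\cref{lem:equalityConditionPowerSeries}). Hence the degree-\(e\) homogeneous part of \(s_{\lambda}[g]\) is a non-negative combination of Schur functions, and summing over \(e\) shows \(s_{\lambda}[g]\) is Schur positive. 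Unwinding the first reduction gives the theorem.

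The genuine obstacle, I expect, is the identification carried out in the core step: one must know that the character of \(S^{\lambda} W\) is \(s_{\lambda}\) evaluated at the weights of \(W\), and that those weights are exactly the monomials that the plethysm \(s_{\lambda}[g]\) substitutes --- this is the technical substance of the ``plethysm \(=\) Schur-functor composition'' dictionary. A secondary point is the bookkeeping when \(g\) has infinite degree, which is absorbed either by permitting \(W\) to be an infinite direct sum with finite-dimensional graded pieces, as above, or by the optional reduction of \(g\) to the homogeneous case. Finally, an entirely parallel proof runs on the symmetric-group side: replace polynomial \(\GL(n, \C)\)-modules by \(\Sym{k}\)-modules, the character by the Frobenius characteristic \(\ch\), and composition of Schur functors by induction along \(\Sym{m} \wr \Sym{|\lambda|} \hookrightarrow \Sym{m|\lambda|}\); either route completes the argument.
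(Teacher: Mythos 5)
Your proof is correct, and it is essentially the argument behind the paper's citation: the paper itself gives no proof of this statement, deferring entirely to \cite[A.2.7~Theorem]{Stanley12a}, which is precisely the \(\GL(n, \C)\) representation-theoretic dictionary (Schur positive \(=\) character of an honest polynomial module, plethysm \(=\) composition of Schur functors) that you spell out. Your reductions (additivity in \(f\), the stabilisation argument via linear independence of the \(s_{\nu}(x_{1}, \ldots, x_{n})\) for \(\nu \vdash e\) and \(n \geq e\)) are sound, and you correctly isolate the one genuinely technical ingredient, namely that the character of \(S^{\lambda}W\) is \(s_{\lambda}\) evaluated at the weights of \(W\).
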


Thus, if \(f\) and \(g\) are Schur positive, then the \(c_{\lambda}\) in the Schur expansion of \(f[g]\) are non-negative integers.
Hence, they might admit a combinatorial interpretation.
For the case where \(f\) and \(g\) are Schur functions themselves, we get one of the major open problems in combinatorics:

\begin{objective}	\label{obj:PlethysmSchurFunctions}
	Let \(\lambda, \mu\) be two partitions.
Determine the coefficients \(a_{\lambda, \mu}^{\nu}\) in the Schur expansion
	\[
		s_{\lambda}[s_{\mu}] = \sum_{\nu} a_{\lambda, \mu}^{\nu} s_{\nu}.
	\]
\end{objective}

Ultimately, we want to relate Reidemeister numbers on free nilpotent groups to this open problem.
In \cref{theo:LinkFkandfrk}, we showed that \(\Phi_{r}(f_{k}) = f_{r, k}\) for all \(r \geq 2\) and \(k \geq 1\), where
\[
	F_{k} = \sum_{X \in \HHinf{k}} \eta(X)
\]
and
\[
	f_{k} = \sum_{i = 0}^{\infty} (-1)^{i}e_{i}[F_{k}].
\]
Since \(e_{i} = s_{1^{i}}\) for all \(i \geq 1\), we can write 
\[
	f_{k} = \sum_{i = 0}^{\infty} (-1)^{i}s_{1^{i}}[F_{k}].
\]
We now argue that \(F_{k}\) is in fact Schur positive and, in some cases, even a Schur function.

\subsection{Schur positivity of \(F_{n}\)}	\label{subsec:SchurFunctionRepresentationTheory}

Symmetric functions, and especially Schur functions, naturally occur in representation theory, in particular in the representation theory of \(\Sym{n}\) and of \(\GL(n, \C)\).
We provide a high level overview of both and prove a relation between them in the context of free Lie algebras.
This relation will yield a proof that the symmetric functions \(F_{n}\) are Schur positive.

The discussion of representations of \(\GL(n, \C)\) below is based on \cite[Chapter 7, Appendix 2]{Stanley12a}.

\begin{defin}
	Let \(n, m \geq 1\) be two integers.
A representation \(\rho: \GL(n, \C) \to \GL(m, \C)\) is called \emph{polynomial} if the entries of \(\rho(A)\) are polynomials in the entries of \(A \in \GL(n, \C)\).
\end{defin}

An example of a polynomial representation is the determinant map\[
\det: \GL(n, \C) \to \GL(1, \C): A \mapsto \det(A).
\]
If \(\rho: \GL(n, \C) \to \GL(m, \C)\) is a polynomial representation, then there exists a multiset \(\M_{\rho}\) consisting of monomials in \(n\) variables such that, if \(A \in \GL(n, \C)\) has eigenvalues \(\lambda_{1}, \ldots, \lambda_{n}\), \(\rho(A)\) has eigenvalues \(\lambda^{\alpha}\), where \(x^{\alpha} \in \M_{\rho}\).
We then define the \emph{character} of \(\rho\) to be the polynomial
\[
	\Char(\rho) := \sum_{x^{\alpha} \in \M_{\rho}} x^{\alpha}.
\]

From the definition, it readily follows that the character of a sum of polynomial representations is the sum of the characters.
Furthermore, the character of a polynomial representation is always a \emph{symmetric} polynomial in \(x_{1}, \ldots, x_{n}\): for \(A \in \GL(n, \C)\), any permutation \(\lambda_{\sigma(1)}, \ldots, \lambda_{\sigma(n)}\) of the eigenvalues of \(A\) must yield the same eigenvalues of \(\rho(A)\).

The relation between Schur functions and representation theory of \(\GL(n, \C)\) is captured in the following theorem.
For a partition \(\lambda = (\lambda_{1}, \ldots, \lambda_{k} , \ldots)\), we define the \emph{length} to be the maximal \(i\) such that \(\lambda_{i} \ne 0\).

\begin{theorem}[{\cite[A2.4 Theorem(IV)]{Stanley12a}}]
	Let \(n \geq 1\) be an integer.
	\begin{enumerate}
		\item The irreducible polynomial representations of \(\GL(n, \C)\) are in one-to-one correspondence with the partitions of length at most \(n\).
		\item If the irreducible polynomial representation \(\rho\) corresponds to \(\lambda\) under this correspondence, then
		\[
			\Char(\rho) = s_{\lambda}(x_{1}, \ldots, x_{n}).
		\]
	\end{enumerate}
\end{theorem}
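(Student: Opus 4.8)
The plan is to derive the statement from the classical structure theory of the reductive algebraic group \(\GL(n, \C)\). Since \(\C\) has characteristic zero, every finite-dimensional rational representation of \(\GL(n, \C)\) is completely reducible, so the classification problem reduces to the irreducibles. First I would restrict to the maximal torus \(T\) of diagonal matrices and the Borel subgroup \(B\) of upper-triangular matrices, and invoke the theorem of the highest weight: the irreducible rational representations of \(\GL(n, \C)\) are in bijection with their highest weights, which run over the dominant integral weights \(\lambda = (\lambda_{1} \geq \ldots \geq \lambda_{n})\) with \(\lambda_{i} \in \Z\). The feature special to \emph{polynomial} representations is that such a representation is polynomial if and only if all entries of its highest weight are non-negative; thus the polynomial irreducibles correspond precisely to the partitions of length at most \(n\), which is part (1). (As a sanity check, the determinant representation has highest weight \((1, \ldots, 1) = 1^{n}\), and \(s_{1^{n}}(x_{1}, \ldots, x_{n}) = x_{1} \cdots x_{n} = \det\).)

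For part (2), let \(\rho_{\lambda}\) be the polynomial irreducible with highest weight \(\lambda\). I would compute its character by the Weyl character formula, which for \(\GL(n, \C)\), evaluated on \(\operatorname{diag}(x_{1}, \ldots, x_{n})\), collapses to the bialternant
\[
	\Char(\rho_{\lambda}) = \frac{\det\left(x_{i}^{\lambda_{j} + n - j}\right)_{1 \leq i, j \leq n}}{\det\left(x_{i}^{n - j}\right)_{1 \leq i, j \leq n}},
\]
the denominator being the Vandermonde determinant \(\prod_{i < j}(x_{i} - x_{j})\). It then remains to identify this ratio with the Schur polynomial \(\Phi_{n}(s_{\lambda}) = s_{\lambda}(x_{1}, \ldots, x_{n})\) as defined combinatorially above, i.e.\ with \(\sum_{T} x^{T}\) over all semi-standard Young tableaux \(T\) of shape \(\lambda\) with entries in \(\{1, \ldots, n\}\). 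This is the classical bialternant (Jacobi) formula for Schur polynomials, which I would establish either by a standard sign-reversing involution argument or, more cleanly, by the Lindstr\"{o}m--Gessel--Viennot lemma applied to families of non-intersecting lattice paths.

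There is also a route that bypasses the Weyl character formula, via Schur--Weyl duality: decompose \(V^{\otimes k}\) with \(V = \C^{n}\) as a \(\GL(n, \C) \times \Sym{k}\)-bimodule into the sum of \(S^{\lambda}V \otimes M^{\lambda}\) over \(\lambda \vdash k\), where \(M^{\lambda}\) is the Specht module and \(S^{\lambda}V\), cut out by a Young symmetriser, is \(0\) when the length of \(\lambda\) exceeds \(n\) and irreducible otherwise. Taking \(\GL(n, \C)\)-characters and using \(\Char(V^{\otimes k}) = (x_{1} + \ldots + x_{n})^{k} = \sum_{\lambda \vdash k} f^{\lambda} s_{\lambda}\) together with \(\dim M^{\lambda} = f^{\lambda}\) forces \(\Char(S^{\lambda}V) = s_{\lambda}(x_{1}, \ldots, x_{n})\); a Schur-algebra argument --- every degree-\(k\) polynomial representation is a summand of a direct sum of copies of \(V^{\otimes k}\) --- then shows these modules exhaust the polynomial irreducibles.

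In either approach the main obstacle is the same combinatorial identity: reconciling the Lie-theoretic form of the character --- bialternant, or character of a Schur functor --- with the tableau-based definition of \(s_{\lambda}\) used in this paper; everything else (complete reducibility, the highest weight theorem, the polynomiality constraint on the weights) is routine structure theory. As the result is entirely classical, for the actual write-up I would simply cite \cite[A2.4~Theorem]{Stanley12a}, with \cite{Weyman03} as a reference for the Schur-functor formulation.
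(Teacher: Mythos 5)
The paper offers no proof of this theorem: it is quoted verbatim as a classical result with the citation to \cite[A2.4 Theorem]{Stanley12a}, so your closing decision to simply cite Stanley is exactly what the paper does, and there is no in-paper argument to compare against. Your primary sketch (complete reducibility, the highest weight theorem, the characterisation of polynomiality by non-negativity of the highest weight, the Weyl character formula, and the identification of the bialternant with the tableau-generating-function definition of \(s_{\lambda}\) via Lindstr\"{o}m--Gessel--Viennot or a sign-reversing involution) is a correct and standard route to the result. One small caveat on your Schur--Weyl alternative: the identity \(\sum_{\lambda \vdash k} f^{\lambda}\, \Char(S^{\lambda}V) = \sum_{\lambda \vdash k} f^{\lambda} s_{\lambda}(x_{1}, \ldots, x_{n})\) does not by itself \emph{force} the termwise equality \(\Char(S^{\lambda}V) = s_{\lambda}(x_{1}, \ldots, x_{n})\); you need an additional input such as the unitriangularity of the weight-multiplicity matrix of \(S^{\lambda}V\) with respect to dominance order (or a direct SSYT basis of the Weyl module), after which the conclusion follows. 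Since you flag the tableau--Lie-theory reconciliation as the main obstacle and would cite the result in any case, this is a minor imprecision rather than a gap.
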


Now, we move to representations of the symmetric group.
We first need yet another family of symmetric functions, the power symmetric functions.
\begin{defin}
	Let \(n \geq 1\) and let \(\lambda \in \Par\) be a partition.
We define the \emph{power symmetric functions} as
	\[
		p_{n} := \sum_{i \geq 1} x_{i}^{n}
	\]
	and
	\[
		p_{\lambda} := \prod_{i \geq 1} p_{\lambda_{i}}.
	\]
\end{defin}

Let \(n \geq 1\) be an integer, \(\rho: \Sym{n} \to \GL(V)\) a representation of \(\Sym{n}\) and \(\chi_{\rho}\) its (ordinary) character.
The \emph{Frobenius character of \(\rho\)} is the symmetric function
\[
	\ch(\rho) := \frac{1}{n!} \sum_{\sigma \in \Sym{n}} \chi_{\rho}(\sigma) p_{c(\sigma)}
\]
where \(c(\sigma) = (c_{1}, c_{2}, \ldots, c_{k}, \ldots)\) is the cycle type of \(\sigma\), \ie the cycle decomposition of \(\sigma\) consists of cycles of length \(c_{1} \geq c_{2} \geq \ldots \geq c_{k} \geq \ldots\).

\begin{remark}
	For clarity, we list all the uses of the term \emph{character} and their corresponding notations:
	\begin{itemize}
		\item The \emph{character} of a polynomial representation \(\rho\) of \(\GL(n, \C)\) is \(\Char(\rho)\).
		\item The \emph{ordinary character} of a representation \(\rho\) of \(\Sym{n}\) is \(\chi_{\rho}\).
		\item The \emph{Frobenius character} of a representation \(\rho\) of \(\Sym{n}\) is \(\ch(\rho)\).
	\end{itemize}
\end{remark}

The irreducible representations of \(\Sym{n}\) are indexed by the partitions of \(n\), see \cite[\S4.2]{FultonHarris04}.
This indexation behaves nicely under the Frobenius character.

\begin{theorem}[{\cite[(7.6)(i), p.~114]{MacDonald95}}]
	Let \(n \geq 1\).
For each partition \(\lambda \vdash n\), the Frobenius character of the irreducible representation \(\rho_{\lambda}\) of \(\Sym{n}\) equals \(\ch(\rho_{\lambda}) = s_{\lambda}\).
\end{theorem}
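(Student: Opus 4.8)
\emph{Proof proposal.}
The plan is to realise the Frobenius characteristic as an isometry between the class functions on \(\Sym{n}\) and the symmetric functions of degree \(n\), and then to pin down \(\ch(\rho_{\lambda})\) by pairing against the characteristics of the Young permutation modules \(M^{\mu} := \mathrm{Ind}_{\Sym{\mu}}^{\Sym{n}} \mathbf{1}\), where \(\Sym{\mu} = \Sym{\mu_{1}} \times \cdots \times \Sym{\mu_{\ell}}\) is the Young subgroup attached to \(\mu \vdash n\). Throughout one equips \(\Lambda\) with the Hall inner product, i.e. the bilinear form \(\langle \cdot, \cdot \rangle\) determined by \(\langle p_{\lambda}, p_{\mu} \rangle = z_{\lambda} \delta_{\lambda \mu}\), where \(z_{\lambda} = \prod_{i \geq 1} i^{m_{i}} m_{i}!\) with \(m_{i}\) the number of parts of \(\lambda\) equal to \(i\); equivalently, the form for which the complete homogeneous symmetric functions \(h_{\mu} = \prod_{i} h_{\mu_{i}}\), with \(h_{n} := \sum_{\lambda \vdash n} z_{\lambda}^{-1} p_{\lambda}\), and the monomial symmetric functions \(m_{\mu}\) are dual bases. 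On the group side one uses the standard inner product \(\langle \alpha, \beta \rangle_{\Sym{n}} = \frac{1}{n!} \sum_{\sigma} \alpha(\sigma) \overline{\beta(\sigma)}\), for which the irreducible characters \(\chi^{\lambda} := \chi_{\rho_{\lambda}}\) (\(\lambda \vdash n\)) form an orthonormal basis.

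First I would verify that \(\ch\), extended \(\C\)-linearly to all class functions on \(\Sym{n}\), is an isometric isomorphism onto the degree-\(n\) part \(\Lambda^{n}\). This is a short computation: the indicator class function \(f_{\rho}\) of the conjugacy class of cycle type \(\rho\) satisfies \(\langle f_{\rho}, f_{\sigma} \rangle_{\Sym{n}} = z_{\rho}^{-1} \delta_{\rho \sigma}\) (an element of cycle type \(\rho\) has centraliser of order \(z_{\rho}\)), while \(\ch(f_{\rho}) = z_{\rho}^{-1} p_{\rho}\), and \(\langle z_{\rho}^{-1} p_{\rho}, z_{\sigma}^{-1} p_{\sigma} \rangle = z_{\rho}^{-1} \delta_{\rho \sigma}\) by definition of the Hall form; since \(\{f_{\rho}\}_{\rho \vdash n}\) spans the class functions and \(\{p_{\rho}\}_{\rho \vdash n}\) spans \(\Lambda^{n}\), this makes \(\ch\) an isometry. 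In particular \(\{\ch(\chi^{\lambda})\}_{\lambda \vdash n}\) is an orthonormal basis of \(\Lambda^{n}\).

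Next I would compute \(\ch\) of the permutation modules. Using that \(\ch\) turns the external induction product into multiplication, i.e. \(\ch(\mathrm{Ind}_{\Sym{a} \times \Sym{b}}^{\Sym{a+b}}(U \boxtimes V)) = \ch(U)\,\ch(V)\) (this follows from a Mackey/double-coset bookkeeping on cycle types), together with \(\ch(\mathbf{1}_{\Sym{n}}) = \frac{1}{n!} \sum_{\sigma} p_{c(\sigma)} = \sum_{\rho \vdash n} z_{\rho}^{-1} p_{\rho} = h_{n}\), one obtains \(\ch(M^{\mu}) = h_{\mu}\). Now I invoke \emph{Young's rule}: the multiplicity of \(\rho_{\lambda}\) in \(M^{\mu}\) equals the Kostka number \(K_{\lambda \mu}\), and moreover \(K_{\mu \mu} = 1\) while \(K_{\lambda \mu} = 0\) unless \(\lambda \trianglerighteq \mu\) in dominance order. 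Via the isometry this reads \(\langle \ch(\chi^{\lambda}), h_{\mu} \rangle = \langle \chi^{\lambda}, \chi_{M^{\mu}} \rangle_{\Sym{n}} = K_{\lambda \mu}\) for all \(\mu \vdash n\). On the other hand, the very definition \(s_{\lambda} = \sum_{\mu} K_{\lambda \mu} m_{\mu}\) together with \(h\)--\(m\) duality gives \(\langle s_{\lambda}, h_{\mu} \rangle = K_{\lambda \mu}\).

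Finally I would conclude by inverting a triangular system. Expanding \(\ch(\chi^{\lambda}) = \sum_{\nu \vdash n} c_{\nu} s_{\nu}\) in the Schur basis yields \(K_{\lambda \mu} = \langle \ch(\chi^{\lambda}), h_{\mu} \rangle = \sum_{\nu} c_{\nu} \langle s_{\nu}, h_{\mu} \rangle = \sum_{\nu} c_{\nu} K_{\nu \mu}\) for every \(\mu \vdash n\); since the Kostka matrix \((K_{\nu \mu})_{\nu, \mu \vdash n}\) is unitriangular for dominance order, hence invertible over \(\Z\), this forces \(c_{\nu} = \delta_{\nu \lambda}\), so \(\ch(\rho_{\lambda}) = \ch(\chi^{\lambda}) = s_{\lambda}\). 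The one genuinely substantive ingredient is Young's rule, i.e. the unitriangular decomposition of \(M^{\mu}\) into irreducibles, which is where the combinatorics of tabloids (or, alternatively, the RSK correspondence applied to the Cauchy identity) does the real work; everything else is formal manipulation with power sums and with the definition of the Hall pairing. An alternative that sidesteps the permutation modules would feed the Jacobi--Trudi determinant \(s_{\lambda} = \det(h_{\lambda_{i} - i + j})\) through the ring isomorphism \(\ch\) to exhibit \(s_{\lambda}\) as \(\pm \ch\) of a virtual character, use \(\langle s_{\lambda}, s_{\lambda} \rangle = 1\) (Cauchy identity) to see it is \(\pm\) an irreducible, and read off the sign from the \(M^{\lambda}\)-term; I would nonetheless present the permutation-module argument as the cleaner one.
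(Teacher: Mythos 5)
Your argument is correct, but note that the paper does not actually prove this statement: it is quoted directly from Macdonald, so there is no in-paper proof to compare against. Your route --- \(\ch\) is an isometry onto the degree-\(n\) part of \(\Lambda\), \(\ch(M^{\mu}) = h_{\mu}\), Young's rule gives \(\langle \ch(\chi^{\lambda}), h_{\mu}\rangle = K_{\lambda\mu}\), this matches \(\langle s_{\lambda}, h_{\mu}\rangle = K_{\lambda\mu}\) (which is exactly the paper's definition \(s_{\lambda} = \sum_{\mu} K_{\lambda\mu} m_{\mu}\) read through \(h\)--\(m\) duality), and the unitriangular Kostka matrix is inverted --- is one of the two standard proofs and is sound; in fact, once you know \(\langle \ch(\chi^{\lambda}) - s_{\lambda}, h_{\mu}\rangle = 0\) for all \(\mu \vdash n\), you can finish immediately by nondegeneracy of the Hall form, without expanding in the Schur basis at all. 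The one load-bearing ingredient is, as you say, Young's rule for the Specht modules; since the paper indexes the irreducibles via Fulton--Harris (Young symmetrizers, i.e.\ Specht modules), that is the correct form of the rule to invoke and there is no circularity. For what it is worth, Macdonald's own proof in the cited source is essentially your parenthetical alternative: he uses the isometry together with the Cauchy identity to show that the \(s_{\lambda}\) form an orthonormal integral basis consisting of \(\pm\) irreducible characters, and then determines the sign and the indexing from the triangular relation with the \(h_{\mu}\). Your permutation-module argument trades the Cauchy identity for Young's rule but is of comparable depth, and either is an acceptable replacement for the citation.
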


The Frobenius character of a sum of representations is the sum of their respective Frobenius characters.
Hence, the Frobenius character of \emph{any} representation of \(\Sym{n}\) is a non-negative integral linear combination of Schur functions.
In other words, it is a Schur positive symmetric function.

To prove that \(F_{n}\) is Schur positive for each \(n \geq 1\), we construct a representation of \(\Sym{n}\) whose Frobenius character equals \(F_{n}\).

Let \(N \geq 1\) and let \(\ff_{N}\) be the free Lie algebra on \(X_{1}, \ldots, X_{N}\).
We construct a set \(\B(N) := \bigcup_{i = 1}^{\infty} \B_{i}(N)\) similar to a Hall basis of \(\ff_{N}\):
\begin{itemize}
	\item Put \(\B_{1}(N) := \{X_{1}, \ldots, X_{N}\}\).
	\item For \(i \geq 2\), suppose \(\B_{1}(N)\) up to \(\B_{i - 1}(N)\) have been constructed.
	Define \(\B_{i}(N)\) to be the set of all Lie brackets \([Y, Z]\) with \(Y \in \B_{k}(N)\) and \(Z \in \B_{l}(N)\) such that \(k + l = i\).
\end{itemize}	
In other words, \(\B_{i}(N)\) contains all possible Lie brackets of \(i\) elements from \(\B_{1}(N)\), where the brackets are taken in any order.
For instance, \(\B_{4}(N)\) contains both \([[[X_{1}, X_{2}], X_{3}], X_{4}]\) and \([[X_{1}, X_{2}], [X_{1}, X_{3}]]\) (if \(N \geq 4\)).

Note that \(\B_{i}(N)\) is not a linearly independent set for \(i \geq 2\), as \(\B_{2}(N)\) contains both \([X_{1}, X_{2}]\) and \([X_{2}, X_{1}]\), for example.
It is therefore distinct from the subset \(\HH_{i}\) of a Hall basis \(\HH\) of \(\ff_{N}\) (for \(i \geq 2\)).
The inclusion \(\HH_{i} \subseteq \B_{i}(N)\), however, does hold for all \(i \geq 1\).

Now, for an integer \(n \geq 1\), we define \(\ffsupport{n}\) to be the \emph{vector subspace} of \(\ff_{n}\) spanned by all elements in \(\B_{n}(n)\) that contain each \(X_{i}\) exactly once.
For example, \(\ffsupport{3}\) is spanned by the vectors
\[
	\{[[X_{i}, X_{j}], X_{k}] \mid \{i, j, k\} = \{1, 2, 3\}\} \cup \{[X_{i}, [X_{j}, X_{k}]] \mid \{i, j, k\} = \{1, 2, 3\}\}.
\]

The group \(\Sym{n}\) acts on \(\ff_{n}\) in a natural way by permuting the \(X_{i}\), that is, for \(\sigma \in \Sym{n}\), we put \(\sigma \cdot X_{i} := X_{\sigma(i)}\) for all \(i \in \{1, \ldots, n\}\).
As this action maps \(\B_{n}(n)\) onto itself, we get a representation \(\rho^{(n)}: \Sym{n} \to \GL(\ffsupport{n})\).

\begin{theorem}	\label{theo:FnFrobeniusCharacter}
	Let \(n \geq 1\) be an integer.
The symmetric function \(F_{n}\) is the Frobenius character of the representation \(\rho^{(n)}: \Sym{n} \to \GL(\ffsupport{n})\).
\end{theorem}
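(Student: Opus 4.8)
The plan is to compare the Frobenius character $\ch(\rho^{(n)})$ with $F_{n} = \sum_{X \in \HHinf{n}} \eta(X)$ by making both sides transparent on a common model, namely the \emph{multilinear component} of the free Lie algebra. First I would record the standard fact (a consequence of the PBW theorem and the structure of the free Lie algebra) that $\pp_{n}$, the span of those $n$-fold brackets of $X_{1}, \ldots, X_{n}$ in which each generator occurs exactly once, has dimension $(n-1)!$, and that it is exactly the multilinear (multidegree $(1,1,\ldots,1)$) part of $\gamma_{n}(\ff_{n})$. The representation $\rho^{(n)}$ is then the action of $\Sym{n}$ by permuting generators on this multilinear component; its character is the classical Lie character $\mathrm{Lie}_{n}$, whose Frobenius characteristic is well documented (it is $\frac{1}{n}\sum_{d\mid n}\mu(d) p_{d}^{n/d}$, which equals the degree-$n$ homogeneous part of $-\sum_{d\ge 1}\frac{\mu(d)}{d}\log(1-p_{d})$). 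However, rather than invoke this formula as a black box, I would prefer the self-contained route below, since it ties directly to the Hall-set description the paper is built around.

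The key observation is that a Hall basis gives, for \emph{each} multidegree, a basis of the corresponding multigraded component of $\ff_{\infty}$, and the map $\eta$ simply records the multidegree of a Hall basis element as a monomial. Concretely, for any $\alpha \in A$ with $|\alpha| = n$, the number of $X \in \HHinf{n}$ with $\eta(X) = x^{\alpha}$ is the dimension of the multidegree-$\alpha$ component of $\gamma_{n}(\ff_{\infty})$ (equivalently of $\ff_{N}$ for $N$ large), because the projections of $\HHinf{n}$ to each such component form a basis — this is precisely the multigraded refinement of the statement, used already in \cref{lem:basisFactorsLCSnrc}, that $\HH_{k}$ projects to a basis of $\gamma_{k}/\gamma_{k+1}$. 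Hence $F_{n} = \sum_{X \in \HHinf{n}}\eta(X)$ is, by definition, the generating function $\sum_{\alpha} (\dim V_{\alpha})\, x^{\alpha}$ where $V_{\alpha}$ is the multidegree-$\alpha$ part of $\gamma_{n}(\ff_{\infty})$; in the language of \cref{subsec:SchurFunctionRepresentationTheory}, $F_{n}$ is the $\GL$-character of the degree-$n$ homogeneous component of the free Lie algebra.

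With this in hand I would finish by the standard dictionary between the $\GL$-character of a multigraded $\Sym{n}$-stable space and the Frobenius character of its multilinear part. The point is that $\gamma_{n}(\ff_{\infty})$, as a polynomial $\GL$-functor of degree $n$ applied to the infinite-dimensional space, has $\GL$-character equal to $\ch$ of the $\Sym{n}$-representation on its multilinear component $\pp_{n}$ (Schur–Weyl duality: the degree-$n$ polynomial functors correspond to $\Sym{n}$-modules, and this correspondence sends $\GL$-character to Frobenius characteristic, matching $s_{\lambda} \leftrightarrow s_{\lambda}$). The $\Sym{n}$-action on $\pp_{n}$ induced by permuting the tensor factors is exactly the action by permuting the $X_{i}$, i.e. $\rho^{(n)}$. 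Therefore $F_{n} = \ch(\rho^{(n)})$. The step I expect to be the main obstacle is making the second paragraph's counting claim fully rigorous — i.e. that $\HHinf{n}$ refines to a basis in every fixed multidegree, not merely in total degree — and then cleanly invoking the functor/character correspondence without developing the full theory of polynomial functors; one can sidestep the latter by instead verifying directly that both $F_{n}$ and $\ch(\rho^{(n)})$ have the same coefficient of $m_{\alpha}$ for every composition $\alpha$ of $n$, using that $K_{\lambda\mu}$-style monomial expansions of $\ch$ of a permutation-type action count fixed points, which here are the multilinear Hall basis elements of the relevant multidegree.
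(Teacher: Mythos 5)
Your proposal is correct in outline but takes a genuinely different route from the paper. The paper's proof is a comparison of two \emph{explicit} formulas: it quotes Garsia for $\ch(\rho^{(n)})=\frac{1}{n}\sum_{d\mid n}\mu(d)p_{d}^{n/d}$, quotes him again for $\Char(\rho_{N,n})=\Phi_{N}\bigl(\frac{1}{n}\sum_{d\mid n}\mu(d)p_{d}^{n/d}\bigr)$, identifies $\Char(\rho_{N,n})$ with $\Phi_{N}(F_{n})$ via \cref{lem:eigenvaluesphii}, and concludes with \cref{lem:equalityConditionPowerSeries}. You instead avoid the M\"obius formula entirely: your multigraded refinement of \cref{lem:basisFactorsLCSnrc} (that the Hall elements of a fixed multidegree form a basis of that multigraded component, so $F_{n}$ is the generating function of multigraded dimensions, i.e.\ the $\GL$-character of the degree-$n$ part of the free Lie algebra) is correct and coincides in substance with the paper's identification $\Char(\rho_{N,n})=\Phi_{N}(F_{n})$; but you then finish by the Schur--Weyl dictionary between degree-$n$ polynomial functors and $\Sym{n}$-modules, which the paper never needs. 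What your route buys is a conceptual explanation, independent of Garsia's computation, of why the two characters must agree; what it costs is that the input $\mathrm{Lie}_{n}(V)\cong(\pp_{n}\otimes V^{\otimes n})^{\Sym{n}}$ is itself a nontrivial standard fact that the paper's framework does not develop. One caveat on your proposed elementary fallback: $\rho^{(n)}$ is \emph{not} a permutation representation (the spanning set of brackets satisfies $[X_{2},X_{1}]=-[X_{1},X_{2}]$ and the Jacobi relations), so the coefficient of $m_{\alpha}$ in $\ch(\rho^{(n)})$ is $\dim\pp_{n}^{\Sym{\alpha}}$ for the Young subgroup $\Sym{\alpha}$, not a fixed-point count of a set action; matching this with the number of Hall elements of multidegree $\alpha$ is exactly the polarization/restitution statement you would still have to prove, so the "sidestep" is not actually shorter than invoking Schur--Weyl properly.
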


\begin{proof}
	A.\;Garsia proves \cite[Theorem~5.2~\&~Theorem~5.3]{Garsia90} that the Frobenius character of \(\rho^{(n)}\) is given by
	\begin{equation}	\label{eq:FrobeniusCharacterRhon}
		\ch(\rho^{(n)})	=	\frac{1}{n} \sum_{d \mid n} \mu(d) p_{d}^{n / d},
	\end{equation}
	where \(\mu\) is the Möbius function, as defined in \cref{theo:DimensionHallBasis}.
	Strictly speaking, in the proof of \cite[Theorem~5.2]{Garsia90}, he works with symmetric \emph{polynomials} rather than symmetric functions in the definition of the Frobenius character and the power symmetric polynomials.
However, his result and proof are still valid when we work with symmetric functions instead.

	Garsia additionally proves that a particular representation of \(\GL(N, \C)\) for certain values of \(N\) essentially has the same character as above.
We can express this character in a different way using the results of \cref{sec:FromReidemeisterToPolynomials}, which we will then use to prove the theorem.
		
	We start by constructing this representation.
Let \(N \geq n\) be an integer.
Recall the set \(\B(N) = \bigcup_{i = 1}^{\infty} \B_{i}(N)\) of Lie brackets in the free Lie algebra \(\ff_{N}\).
We define, for every \(i \geq 1\), \(\fflength{N}{i}:= \Span(\B_{i}(N))\).
Note that, if \(\HH = \bigcup_{i = 1}^{\infty} \HH_{i}\) is a Hall basis of \(\ff_{N}\), then \(\HH_{i}\) is a basis of \(\fflength{N}{i}\) for \(i \geq 1\).
Indeed, since \(\HH_{i} \subseteq \B_{i}(N)\) for all \( i \geq 1\), \(\Span(\HH_{i}) \subseteq \fflength{N}{i}\).
Furthermore, \(\ff_{N} = \bigoplus_{i = 1}^{\infty} \fflength{N}{i}\), so as \(\HH\) is a basis of \(\ff_{N}\), each \(\HH_{i}\) must span the whole of \(\fflength{N}{i}\).
Being part of a basis, \(\HH_{i}\) is also linearly independent.

Let \(A \in \GL(N, \C)\) be arbitrary.
Since \(\ff_{N}\) is the free Lie algebra on \(X_{1}, \ldots, X_{N}\), the matrix \(A\) induces a Lie algebra automorphism \(\phi_{A}\) of \(\ff_{N}\) by defining \(\phi_{A}(X_{i}) := \sum_{j = 1}^{N} A_{ij} X_{j}\).
By linearity of the Lie bracket, the image of an element in \(\B_{n}(N)\) is a linear combination of elements of \(\B_{n}(N)\).
Hence, \(\phi_{A}\) restricts to an automorphism \(\restr{\phi_{A}}{\fflength{N}{n}}\) of \(\fflength{N}{n}\).
Thus, we get a map
\[
	\rho_{N, n}: \GL(N, \C) \to \GL(\fflength{N}{n}): A \mapsto \restr{\phi_{A}}{\fflength{N}{n}},
\]
which in addition is a group homomorphism.
In other words, \(\rho_{N, n}\) is a representation of \(\GL(N, \C)\).
Note that by the linearity of the Lie bracket, \(\rho_{N, n}\) is also a polynomial representation.
	
	Now, Garsia proves \cite[Theorem~6.4]{Garsia90} that, for all \(N \geq n\), the character of \(\rho_{N, n}\) is equal to
	\[
		\Char(\rho_{N, n}) = \frac{1}{n} \sum_{d \mid n} \mu(d) p_{d}^{n / d}(x_{1}, \ldots, x_{N}) = \Phi_{N}\left(\frac{1}{n} \sum_{d \mid n} \mu(d) p_{d}^{n / d}\right)
	\]
	
	On the other hand, the representation \(\rho_{N, n}\) is closely related to \cref{lem:eigenvaluesphii}.
There, we started from a Lie algebra automorphism \(\phi_{*}\) of \(\nrc\) and looked at the induced maps on \(\gamma_{i}(\nrc) / \gamma_{i + 1}(\nrc)\).
Now, let \(c \geq n\).
Since \(\gamma_{c + 1}(\ff_{N})\) is characteristic, \(\phi_{A}\) as constructed above induces an automorphism \(\phi_{A, *}\) of \(\nn_{N, c}\).
We show that the eigenvalues of \(\phi_{A, *, n} \in \GL(\gamma_{n}(\nn_{N, c}) / \gamma_{n + 1}(\nn_{N, c}))\) and \(\restr{\phi_{A}}{\fflength{N}{n}}\) are equal.

To that end, let \(\HH = \bigcup_{i = 1}^{\infty} \HH_{i}\) be a Hall basis of \(\ff_{N}\).
As argued earlier, the subset \(\HH_{n}\) is a basis of \(\fflength{N}{n}\), and by \cref{lem:basisFactorsLCSnrc}, the natural projections of the elements in \(\HH_{n}\) also form a basis of \(\gamma_{n}(\nn_{N, c}) / \gamma_{n + 1}(\nn_{N, c})\).
Hence, as vector spaces, \(\fflength{N}{n}\) and \(\gamma_{n}(\nn_{N, c}) / \gamma_{n + 1}(\nn_{N, c})\) are isomorphic.
The map that sends every element of \(\HH_{n}\) to its natural projection gives rise to an isomorphism \(\psi: \fflength{N}{n} \to \gamma_{n}(\nn_{N, c}) / \gamma_{n + 1}(\nn_{N, c})\).
This isomorphism satisfies \(\inv{\psi} \circ \phi_{A, *, n} \circ \psi = \restr{\phi_{A}}{\fflength{N}{n}}\).
Hence, \(\phi_{A, *, n} \in \GL(\gamma_{n}(\nn_{N, c}) / \gamma_{n + 1}(\nn_{N, c}))\) and \(\restr{\phi_{A}}{\fflength{N}{n}}\) have the same eigenvalues.

	Subsequently, it follows from \cref{lem:eigenvaluesphii} that the character of \(\rho_{N, n}\) is also equal to
	\[
		\Char(\rho_{N, n}) = \sum_{X \in \HH_{n}} \eta(X)
	\]
	where \(\HH\) is a Hall basis of \(\ff_{N}\) and \(\eta: \HH \to \polZ{N}\) is the map that associates to an element of \(\HH\) its corresponding monomial.
In other words, the character of \(\rho_{N, n}\) is equal to \(\Phi_{N}(F_{n})\).
Thus, combined with \eqref{eq:FrobeniusCharacterRhon}, we have the equality
	\[
		\Phi_{N}(F_{n}) = \Phi_{N}(\ch(\rho^{(n)}))
	\]
	for all \(N \geq n\), as \(N\) was arbitrary.
By \cref{lem:equalityConditionPowerSeries}, it follows that \(F_{n} = \ch(\rho^{(n)})\).
\end{proof}

\begin{cor}
	For each \(n \geq 1\) and \(i \geq 1\), both \(F_{n}\) and \(e_{i}[F_{n}]\) are Schur positive.
\end{cor}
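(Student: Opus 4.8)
The plan is to simply assemble the pieces already in place; this corollary is essentially a bookkeeping exercise combining \cref{theo:FnFrobeniusCharacter} with \cref{theo:PlethysmOfSchurPositiveIsSchurPositive}.

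First I would handle $F_n$. By \cref{theo:FnFrobeniusCharacter}, $F_n = \ch(\rho^{(n)})$ is the Frobenius character of the representation $\rho^{(n)}: \Sym{n} \to \GL(\ffsupport{n})$. As noted in the discussion preceding that theorem, the Frobenius character of an \emph{irreducible} representation $\rho_\lambda$ of $\Sym{n}$ is the Schur function $s_\lambda$, and the Frobenius character is additive over direct sums; since every representation of $\Sym{n}$ decomposes as a direct sum of irreducibles with non-negative integer multiplicities, $\ch(\rho^{(n)})$ is a non-negative integral combination of Schur functions. Hence $F_n$ is Schur positive.

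Next I would handle $e_i[F_n]$. Recall from the discussion following \cref{theo:SymmetricFunctionsInTermsOfElementarySymmetricFunctions} (or one line of checking from the definitions) that $e_i = s_{1^i}$, so $e_i$ is a single Schur function and in particular Schur positive. Combining this with the Schur positivity of $F_n$ just established, \cref{theo:PlethysmOfSchurPositiveIsSchurPositive} applied to $f = e_i$ and $g = F_n$ immediately yields that $e_i[F_n] = s_{1^i}[F_n]$ is Schur positive, completing the proof.

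There is no genuine obstacle here: both assertions follow directly from theorems already proven in the excerpt, and the only minor point to be careful about is invoking the additivity of the Frobenius character together with the classification of irreducibles of $\Sym{n}$ to conclude that \emph{every} (not just irreducible) representation has Schur-positive Frobenius character.
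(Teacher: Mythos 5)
Your proposal is correct and follows exactly the paper's route: invoke \cref{theo:FnFrobeniusCharacter} together with the fact (stated just before that theorem) that the Frobenius character of any representation of \(\Sym{n}\) is Schur positive, then apply \cref{theo:PlethysmOfSchurPositiveIsSchurPositive} with \(e_{i} = s_{1^{i}}\). The only difference is that you spell out the additivity-plus-irreducible-decomposition argument that the paper delegates to the preceding discussion, which is fine.
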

\begin{proof}
	By the previous theorem, \(F_{n}\) is the Frobenius character of a representation of \(\Sym{n}\).
Consequently, it is Schur positive.
\cref{theo:PlethysmOfSchurPositiveIsSchurPositive} then implies that \(e_{i}[F_{n}]\) is Schur positive as well for all \(i \geq 1\).
\end{proof}

At this point, our journey is almost at an end.
We have established a link between the Reidemeister spectra of free nilpotent groups and the plethysm of Schur positive functions.
To show that \(F_{n}\) is sometimes a true Schur function, not merely Schur positive, we end this section by stating a remarkable theorem by W.\;Kraskiewicz and J.\;Weyman.
They give an explicit and elegant combinatorial interpretation of the coefficients in the Schur decomposition of \(\ch(\rho^{(n)})\).
As we have established that \(F_{n}\) is equal to \(\ch(\rho^{(n)})\), we obtain a combinatorial interpretation of the coefficients in the Schur decomposition of \(F_{n}\).

Let \(\lambda \vdash n\) be a partition.
A \emph{standard Young tableau of type \(\lambda\)} is a SSYT of type \(\lambda\) and weight \((1^{n})\).
In other words, all the entries in the Young tableau have to be distinct.
An entry \(i\) in a standard Young tableau is called a \emph{descent} if \(i + 1\) occurs in some row below the entry \(i\).
The \emph{major index} of a standard Young tableau is defined as the sum of all descents.
For example, the descents of the standard Young tableau in \cref{fig:DescentsYT} are \(3\), \(5\) and \(6\), and its major index is \(14\).

\begin{figure}[h]
	\centering
	\begin{tikzpicture}[scale=0.5]
		\foreach \i [count=\j] in {4, 2, 1}{
			\draw (0, -\j) grid ($({\i}, {-\j + 1})$);
		}
		\node at (0.5, -0.5) {\(1\)};
		\node at (1.5, -0.5) {\(2\)};
		\node at (2.5, -0.5) {\(3\)};
		\node at (3.5, -0.5) {\(5\)};
		\node at (0.5, -1.5) {\(4\)};
		\node at (1.5, -1.5) {\(6\)};
		\node at (0.5, -2.5) {\(7\)};
	\end{tikzpicture}
	\caption{Standard Young tableau of shape \((4, 2, 1)\)}
	\label{fig:DescentsYT}
\end{figure}

\begin{theorem}[{\cite{KraskiewiczWeyman01}}]
	Let \(n \geq 1\) be an integer and \(\lambda\) a partition of \(n\).
The coefficient of \(s_{\lambda}\) in \(F_{n}\) is equal to the number of standard Young tableaux of shape \(\lambda\) with major index congruent to \(1\) modulo \(n\).
\end{theorem}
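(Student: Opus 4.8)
The plan is to avoid reproving the Kraśkiewicz--Weyman theorem from scratch and instead deduce the statement from \cref{theo:FnFrobeniusCharacter} together with the classical description of the Lie module \(\ffsupport{n}\) as an induced representation. The starting point is that the Frobenius characteristic \(\ch\) is an isometry from the space of (virtual) characters of \(\Sym{n}\) onto the degree-\(n\) part of \(\Lambda\), sending the irreducible \(\rho_{\lambda}\) to \(s_{\lambda}\) and the orthonormal basis of irreducible characters to the orthonormal basis \(\{s_{\lambda}\}\). Since \cref{theo:FnFrobeniusCharacter} gives \(F_{n} = \ch(\rho^{(n)})\), the coefficient of \(s_{\lambda}\) in \(F_{n}\) is \(\langle F_{n}, s_{\lambda}\rangle = \langle \ch(\rho^{(n)}), \ch(\rho_{\lambda})\rangle = \langle \rho^{(n)}, \rho_{\lambda}\rangle_{\Sym{n}}\), i.e.\ the multiplicity of \(\rho_{\lambda}\) in the representation \(\rho^{(n)}\) on \(\ffsupport{n}\). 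So everything reduces to computing this multiplicity.

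Next I would identify \(\rho^{(n)}\) with an induced representation of \(\Sym{n}\). Let \(\kappa = (1\,2\,\cdots\,n) \in \Sym{n}\), let \(C_{n} = \langle \kappa\rangle\), fix a primitive \(n\)th root of unity \(\zeta_{n}\), and let \(\omega\colon C_{n}\to\C^{\times}\) be the linear character with \(\omega(\kappa) = \zeta_{n}\). Using the standard formula \(\ch(\mathrm{Ind}_{C_{n}}^{\Sym{n}}\omega) = \tfrac1n\sum_{k=0}^{n-1}\omega(\kappa^{k})\,p_{c(\kappa^{k})}\) and the fact that \(\kappa^{k}\) has cycle type consisting of \(\gcd(n,k)\) cycles of length \(n/\gcd(n,k)\), one groups the sum according to \(d = \gcd(n,k)\); the inner sum of roots of unity that arises is the Ramanujan sum \(\sum_{\gcd(m,n/d)=1}\zeta_{n/d}^{m} = \mu(n/d)\), which yields \(\ch(\mathrm{Ind}_{C_{n}}^{\Sym{n}}\omega) = \tfrac1n\sum_{d\mid n}\mu(d)\,p_{d}^{n/d}\). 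Comparing with \eqref{eq:FrobeniusCharacterRhon} and \cref{theo:FnFrobeniusCharacter}, this is exactly \(F_{n}\), so \(\rho^{(n)} \cong \mathrm{Ind}_{C_{n}}^{\Sym{n}}\omega\) as \(\Sym{n}\)-modules. (This is the classical identification of the Lie module, due to Klyachko; it could also simply be quoted.)

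Then, by Frobenius reciprocity, the multiplicity of \(\rho_{\lambda}\) in \(\mathrm{Ind}_{C_{n}}^{\Sym{n}}\omega\) equals \(\langle \mathrm{Res}_{C_{n}}\rho_{\lambda}, \omega\rangle_{C_{n}}\); expanding this inner product over \(C_{n}\) shows it is the number of times \(\zeta_{n}\) occurs as an eigenvalue of \(\kappa\) acting on the Specht module \(\rho_{\lambda}\). The final — and genuinely substantive — step is to identify this eigenvalue multiplicity with \(\#\{T\in\mathrm{SYT}(\lambda) : \maj(T)\equiv 1\pmod n\}\). This is precisely the content of the Kraśkiewicz--Weyman theorem; it is equally a consequence of Springer's theory of regular elements combined with the \(q\)-hook-length formula \(\sum_{T\in\mathrm{SYT}(\lambda)}q^{\maj(T)} = \prod_{i=1}^{n}(1-q^{i}) / \prod_{u\in\lambda}(1-q^{h(u)})\) for the fake-degree polynomial, or of the cyclic sieving phenomenon for standard Young tableaux under promotion.

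I expect this last identification to be the main obstacle to a fully self-contained proof; the surrounding argument — the isometry property of \(\ch\), the Ramanujan-sum computation, and Frobenius reciprocity — is routine, and the edge case \(n=1\) is immediate. For the purposes of this article it suffices to invoke \cite{KraskiewiczWeyman01} for that step, since that reference computes the Schur expansion of \(\ch(\rho^{(n)})\) directly and \cref{theo:FnFrobeniusCharacter} identifies \(\ch(\rho^{(n)})\) with \(F_{n}\).
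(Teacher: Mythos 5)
Your proposal is correct and follows essentially the same route as the paper: the paper also states this result by combining \cref{theo:FnFrobeniusCharacter} (\(F_{n} = \ch(\rho^{(n)})\)) with the cited Kra\'skiewicz--Weyman computation of the Schur expansion of \(\ch(\rho^{(n)})\), offering no independent proof of the combinatorial identification. The extra scaffolding you supply --- the isometry property of \(\ch\), Klyachko's identification \(\rho^{(n)} \cong \mathrm{Ind}_{C_{n}}^{\Sym{n}}\omega\) via the Ramanujan-sum computation, and Frobenius reciprocity --- is accurate and correctly isolates the eigenvalue-multiplicity statement as the genuinely hard step that must be quoted.
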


For example, \(F_{1} = s_{1}\), \(F_{2} = s_{1, 1}\), \(F_{3} = s_{2, 1}\) and \(F_{4} = s_{3, 1} + s_{2, 1, 1}\).
Thus, in turn,
\[
	f_{1} = \sum_{i = 0}^{\infty} (-1)^{i} s_{1^{i}}[s_{1}] = \sum_{i = 0}^{\infty} (-1)^{i} e_{i}[e_{1}] = \sum_{i = 0}^{\infty} (-1)^{i} e_{i}
\]
and
\[
	f_{2} = \sum_{i = 0}^{\infty} (-1)^{i} s_{1^{i}}[s_{1, 1}].
\]

\section{From plethysms of Schur functions back to Reidemeister spectra}	\label{sec:plethysmsBackToSpectra}
Our journey is coming to an end.
We have established a connection between two open problems: fully determining the Reidemeister spectra of free nilpotent groups and determining the coefficients in the Schur expansion of the plethysms of Schur functions.
Since the latter is a major open problem in combinatorics, this connection in some sense indicates why the computations of Dekimpe, Tertooy and Vargas are tedious and, at first sight, hard to generalise.

We want to elaborate further on two aspects.
Firstly, we want to delve deeper into the `practical' aspect of this connection: how does progress in the realm of symmetric functions translate into progress in the realm of Reidemeister spectra? Secondly, it might feel a bit unsatisfying to speak of a connection between both open problems if most of the \(F_{n}\) are not Schur functions.
Therefore, we want to discuss the problem of determining the Reidemeister spectra of free nilpotent \emph{metabelian} groups and show that this is related to the plethysm of proper Schur functions.

\subsection{Main objectives}
At the end of \cref{subsec:FromSymmetricPolynomialsToSymmetricFunctions}, we explained that we can derive the polynomials \(g_{r, k}\) from \(f_{k}\): writing
\[
	f_{k} = \sum_{\lambda \in \Par} c_{\lambda, k} e_{\lambda}
\]
for some \(c_{\lambda, k} \in \Z\) as in \cref{theo:SymmetricFunctionsInTermsOfElementarySymmetricFunctions}, we can derive \(g_{r, k}\) from this equality by applying \(\Phi_{r}\) to both sides.
Here, we elaborate on the details.

Fix integers \(r, c, i\) and \(k\) such that \(r, c \geq 2\), \(i \geq 1\) and \(1 \leq k \leq c\).
We know from \cref{theo:LinkFkandfrk} that
\[
	f_{r, k} = \Phi_{r}(f_{k}) = \Phi_{r} \left(\sum_{i = 1}^{\infty} (-1)^{i} e_{i}[F_{k}]\right).
\]
Note that \(e_{i}\) is homogeneous of degree \(i\) and \(F_{k}\) of degree \(k\).
Hence, \(e_{i}[F_{k}]\) is homogeneous of degree \(ik\).
This implies that there are no cancellations possible among the different terms of the summation.
On the other hand, since \(f_{r, k}\) is a polynomial, only finitely many terms can survive under \(\Phi_{r}\).
To determine which, we look more closely at how \(e_{i}[F_{k}]\) is computed.
Write \(F_{k} = \sum_{j \geq 1} x^{\alpha_{j}}\) for some \(\alpha_{j} \in A\).

For \(e_{i}[F_{k}]\) to (partially) survive under \(\Phi_{r}\), at least one monomial must contain only the variables \(x_{1}\) up to \(x_{r}\).
Suppose that \(i > N(r, k)\), the size of \(\HH_{k}\), where \(\HH\) is a Hall basis of \(\ff_{r}\).
Let \(x_{j_{1}} \cdots x_{j_{i}}\) be a monomial occurring in \(e_{i}\) for some positive integers \(j_{1} < \ldots < j_{i}\).
To compute the plethysm \(e_{i}[F_{k}]\), we have to substitute \(x_{j_{l}} = x^{\alpha_{j_{l}}}\) for each \(l \in \{1, \ldots, i\}\).
All \(j_{l}\) are distinct and \(i > N(r, k)\).
As there are, by \cref{theo:DimensionHallBasis}, only \(N(r, k)\) many monomials occurring in \(F_{k}\) that only contain the variables \(x_{1}\) up to \(x_{r}\), at least one of the substitutions \(x_{j_{l}} = x^{\alpha_{j_{l}}}\) involves a monomial containing \(x_{m}\) with \(m > r\).
The resulting monomial in \(e_{i}[F_{k}]\) then vanishes under \(\Phi_{r}\).
As this holds for all monomials occurring in \(e_{i}\), \(e_{i}[F_{k}]\) vanishes completely under \(\Phi_{r}\).

Consequently,
\begin{equation}	\label{eq:frkUpToNecklacePolynomial}
	f_{r, k} = \Phi_{r}\left(\sum_{i = 1}^{N(r, k)} (-1)^{i} e_{i}[F_{k}]\right)
\end{equation}

In other words, if we want to compute \(\SpecR(N_{r, c})\) for a fixed value of \(c\) and for \(r\) up to a certain bound, say \(R\), we only have to effectively compute \(e_{i}[F_k]\) for \(i \leq N(R, k)\) and \(k \leq c\).

Next, suppose we have an expansion of \(e_{i}[F_{k}]\) in terms of the elementary symmetric functions, say, \(e_{i}[F_{k}] = \sum_{\lambda \in \Par(ik)} c_{\lambda, i, k} e_{\lambda}\), where \(c_{\lambda, i, k} \in \Z\) for all \( i \geq 1, k \geq 1\) and \(\lambda \in \Par(ik)\).
We know that only partitions of \(ik\) occur, as \(e_{i}[F_{k}]\) is homogeneous of degree \(ik\), and so is \(e_{\lambda}\) for \(\lambda \in \Par(ik)\).
From \eqref{eq:frkUpToNecklacePolynomial}, we obtain
\[
	f_{r, k} = \Phi_{r}\left(\sum_{i = 1}^{N(r, k)} (-1)^{i} e_{i}[F_{k}]\right) = \Phi_{r}\left(\sum_{i = 1}^{N(r, k)} (-1)^{i} \sum_{\lambda \in \Par(ik)} c_{\lambda, i, k} e_{\lambda}\right)
\]

Since \(\Phi_{r}(e_{s}) = 0\) for all \(s > r\), the only \(e_{\lambda}\) that survive under \(\Phi_{r}\) are those for which \(\lambda_{i} \leq r\) for all \(i\), or, equivalently, those for which \(\lambda_{1} \leq r\), as a partition is a non-increasing sequence.
Thus,
\[
	f_{r, k} = \sum_{i = 1}^{N(r, k)} (-1)^{i} \sum_{\substack{\lambda \in \Par(ik) \\ \lambda_{1} \leq r}} c_{\lambda, i, k} e_{\lambda}(x_{1}, \ldots, x_{r}).
\]
From this expression, we obtain the polynomial \(g_{r, k}\) from \cref{theo:ReidemeisterNumbersInTermsOfCoefficientCharPol}.

To compute \(\SpecR(N_{r, c})\), we then have to compute all possible values of
\[
	\inftynormb{\prod_{k = 1}^{c} \sum_{i = 1}^{N(r, k)} (-1)^{i} \sum_{\substack{\lambda \in \Par(ik) \\ \lambda_{1} \leq r}} c_{\lambda, i, k} y_{\lambda}}
\]
by letting \(y_{1}, \ldots, y_{r}\) run over all integers, with the restriction that \(y_{r} \in \{-1, 1\}\) and where 
\[
	y_{\lambda} := \prod_{i = 1}^{\infty} y_{\lambda_{i}}.
\]

Also note that
\begin{align*}
	f_{r, k}	&= \sum_{i = 1}^{N(r, k)} (-1)^{i} \sum_{\substack{\lambda \in \Par(ik) \\ \lambda_{1} \leq r}} c_{\lambda, i, k} e_{\lambda}(x_{1}, \ldots, x_{r})	\\
			&= \sum_{i = 1}^{N(r, k)} (-1)^{i} \sum_{\substack{\lambda \in \Par(ik) \\ \lambda_{1} = r}} c_{\lambda, i, k} e_{\lambda}(x_{1}, \ldots, x_{r}) + \sum_{i = 1}^{N(r, k)} (-1)^{i} \sum_{\substack{\lambda \in \Par(ik) \\ \lambda_{1} \leq r - 1}} c_{\lambda, i, k} e_{\lambda}(x_{1}, \ldots, x_{r})	\\
			&= \sum_{i = 1}^{N(r, k)} (-1)^{i} \sum_{\substack{\lambda \in \Par(ik) \\ \lambda_{1} = r}} c_{\lambda, i, k} e_{\lambda}(x_{1}, \ldots, x_{r}) + f_{r - 1, k}.
\end{align*}
Each summand in the first term has a factor \(e_{r}\), as \(\lambda_{1} = r\) for all partitions in the summation.
Therefore, setting \(e_{r} = 0\) in the expression for \(f_{r, k}\) yields the expression for \(f_{r - 1, k}\).
Put differently, the polynomial \(g_{r - 1, k}\) can be derived from the polynomial \(g_{r, k}\) by putting \(e_{r} = 0\).
This rigorously explains the behaviour we observed in the table in \cref{sec:Combinatorics}.

Summarised, from a group theoretical point of view with focus on Reidemeister spectra, we obtain two objectives:
\begin{objective}	\label{obj:GroupTheoreticalObjectiveNrc}
	For all \(i, k \geq 1\), determine the expansion of \(e_{i}[F_{k}]\) in terms of the elementary symmetric functions.
\end{objective}

\begin{objective}	\label{obj:DiophantineObjectiveNrc}
	Suppose we know the expansion of \(e_{i}[F_{k}]\) in terms of elementary symmetric functions for all \(i, k \geq 1\), say, \(e_{i}[F_{k}] = \sum_{\lambda \in \Par(ik)} c_{\lambda, i, k} e_{\lambda}\) for some \(c_{\lambda, i, k} \in \Z\).
	Given integers \(r, c \geq 2\) and a positive integer \(n \geq 1\), determine whether the equation
	\[
		\pm n = \prod_{k = 1}^{c} \sum_{i = 1}^{N(r, k)} (-1)^{i} \sum_{\substack{\lambda \in \Par(ik) \\ \lambda_{1} \leq r}} c_{\lambda, i, k} y_{\lambda}
	\]
	has a solution with \(y_{1}, \ldots, y_{r - 1} \in \Z\) and \(y_{r} \in \{-1, 1\}\).
\end{objective}

As mentioned in the introduction, the Reidemeister spectrum of \(N_{r, 2}\) has already been determined for every \(r \geq 2\), see \cite{DekimpeTertooyVargas20,Romankov11}.
However, the proof by Dekimpe, Tertooy and Vargas is rather ad hoc, so an expression for the expansion in terms of elementary symmetric functions might give an alternative proof that is generalisable.

We can also try to answer more specific questions about \(\SpecR(N_{r, c})\), besides (or as alternative to) completely determining it.
For example, for which values of \(r\) and \(c\) is \(1\) contained in \(\SpecR(N_{r, c})\)?
To answer such questions, it might be of interest to look for factorisations of \(f_{k}\) using elementary symmetric functions.
For example, recall that
\[
	f_{3} = \prod_{i \ne j} (1 - x_{i}^{2}x_{j}) \cdot \left(\prod_{i < j < k} (1 - x_{i}x_{j}x_{k})\right)^{2}.
\]
Both factors are symmetric functions, so both admit an expansion in terms of elementary symmetric functions.
Write \(f_{3} = f_{3, i \ne j} f_{3, i<j<k}^{2}\), where
\[
	f_{3, i \ne j} = \prod_{i \ne j} (1 - x_{i}^{2}x_{j}) = \sum_{n = 1}^\infty (-1)^{n} e_{n} [m_{2, 1}]
\]
and
\[
	f_{3, i < j < k} = \prod_{i < j < k} (1 - x_{i}x_{j} x_{k}) = \sum_{n = 1}^\infty (-1)^{n} e_{n}[e_{3}]
\]
by \cref{prop:Product1-MonomialsEqualsAlternatingPlethysmElementary}.
For \(1\) to be in \(\SpecR(N_{r, c})\), both \(\Phi_{r}(f_{3, i \ne j})\) and \(\Phi_{r}(f_{3, i < j < k})\) have to evaluate to \(\pm 1\) for certain values of \(e_{i}\).

From a combinatorial point of view with focus on the open problem concerning plethysms of Schur functions, the main objective is the following:
\begin{objective}	\label{obj:CombinatorialObjectiveNrc}
	For all \(i, k \geq 1\), determine the expansion of \(e_{i}[F_{k}]\) in terms of the Schur functions and find a combinatorial interpretation of the coefficients.
\end{objective}

The plethysms for \(i = 1\) are easy, since \(e_{1}[F_{k}] = F_{k}\) for all \(k \geq 1\).
Similarly, since \(F_{1} = e_{1}\), we have \(e_{i}[F_{1}] = e_{i}\) for all \(i \geq 1\).
Thus, the difficulty arises when \(i, k \geq 2\).
To determine \(\SpecR(N_{r, c})\), we have to compute \(e_{i}[F_{k}]\) for \(i \geq 1\) and \(k \in \{1, \ldots, c\}\).
For \(c = 2\), this has already been done: D.\;E.\;Littlewood has determined the Schur expansion of \(s_{1^{i}}[s_{1, 1}] = e_{i}[F_{2}]\) \cite[Equation~11.9; 1]{Littlewood50}; see also \cite[Theorem~5.7]{ColmenarejoOrellanaSaliolaSchillingZabrocki22}, \cite[Theorem~5.2]{KlivansReiner08}.
However, the expansion of \(s_{1^{i}}[s_{1, 1}]\) in elementary symmetric functions is not known.

The combinatorial and group theoretical objective are closely related.
The dual Jacobi-Trudi identity \cite[Corollary~7.16.2]{Stanley12a} expresses the Schur functions as a determinant of a matrix  with elementary symmetric functions as entries.
Therefore, we can derive the expansion of \(e_{i}[F_{k}]\) in terms of elementary symmetric functions from its Schur expansion.
However, this is quite cumbersome and perhaps there are alternatives that yield more insight and/or are more efficient.

\subsection{Reidemeister spectrum of free nilpotent metabelian groups}
The objective of determining \(\SpecR(N_{r, c})\) for all \(r, c \geq 2\) is linked to plethysms of Schur functions only when \(k \leq 3\).
However, there is a second family of groups that is closely related to the free nilpotent groups and that yields a stronger connection with plethysms of Schur functions: the free \(c\)-step nilpotent and metabelian groups of finite rank.

For a group \(G\), we define \(G^{(0)} := G\) and \(G^{(i)} := [G^{(i - 1)}, G^{(i - 1)}]\) for \(i \geq 1\).
\begin{defin}
	Let \(r, c \geq 2\) be integers.
The \emph{free \(c\)-step nilpotent and metabelian group of rank \(r\)} is the group
	\[
		M_{r, c} := \frac{F(r)}{\gamma_{c + 1}(F(r)) F(r)^{(2)}}.
	\]
\end{defin}
K.\;Dekimpe and D.\;Gon\c{c}alves have proven that \(M_{r, c}\) has the \(\Rinf\)-property if and only if \(c \geq 2r\), which is the same condition as for \(N_{r, c}\) \cite[Theorem~2.8]{DekimpeGoncalves14}.
We can do an analogous reasoning as we did for \(N_{r, c}\) to relate Reidemeister numbers on \(M_{r, c}\) to symmetric functions and plethysms of Schur functions.
We provide the results for the metabelian case, accompanied with references to the analogous results for \(N_{r, c}\).
However, we only provide the proofs that differ substantially from those for \(N_{r, c}\).

For a Lie algebra \(\g\) and \(i \geq 0\), we define \(\g^{(i)}\) in a similar fashion as we did for groups.
For integers \(r, c \geq 2\), we define the \emph{free \(c\)-step nilpotent and metabelian Lie algebra of rank \(r\)} as the Lie algebra
	\[
		\mrc := \frac{\ff_{r}}{\gamma_{c + 1}(\ff_{r}) +  \ff_{r}^{(2)}}.
	\]
Finally, for elements \(v_{1}, \ldots, v_{n}\) of a Lie algebra \(\g\), we define
\[
	[v_{1}, \ldots, v_{n}] := [[ \ldots [[v_{1}, v_{2}], v_{3}], \ldots], v_{n}].
\]

\begin{theorem}[{\cite[Theorem~2]{Chen51}, \cf \cref{def:HallBasisFreeLieAlgebra}}]	\label{theo:ChenBasis}
	Let \(r, c \geq 2\) be integers and let \(\mrc\) be the free \(c\)-step nilpotent and metabelian Lie algebra on \(X_{1}, \ldots, X_{r}\).
Let \(\CC := \bigcup_{k = 1}^{c} \CC_{k}\) be given by
	\[
		\CC_{1} := \{X_{1}, \ldots, X_{r}\}
	\]
	and
	\[
		\CC_{k} := \{[X_{i_{1}}, X_{i_{2}}, \ldots, X_{i_{k}}] \mid i_{1} > i_{2} \leq i_{3} \leq \ldots \leq i_{k}, \forall j \in \{1, \ldots, k\}: i_{j} \in \{1, \ldots, r\}\}
	\]
	for \(k \geq 2\).
Then \(\CC\) is a vector space basis of \(\mrc\).
\end{theorem}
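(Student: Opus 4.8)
The plan is to establish the two defining properties of a basis separately, working throughout in the free metabelian Lie algebra \(\mr := \ff_{r}/\ff_{r}^{(2)}\), which is graded by bracket length. Since by the third isomorphism theorem \(\mrc = \mr/\gamma_{c + 1}(\mr)\), and \(\gamma_{c + 1}(\mr)\) is exactly the part of bracket length \(> c\), one has \(\mrc \cong \bigoplus_{k = 1}^{c}(\mr)_{k}\) as graded vector spaces, and \(\CC_{k} \subseteq (\mr)_{k}\). So it is enough to prove, for every \(k \geq 1\), that \(\CC_{k}\) spans \((\mr)_{k}\) and is linearly independent there.

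\textbf{Spanning.} First I would recall that iterating the Jacobi identity writes every Lie monomial of length \(k\) as a combination of left-normed monomials \([X_{i_{1}}, X_{i_{2}}, \ldots, X_{i_{k}}]\), so these span \((\mr)_{k}\); it then remains to rewrite each of them into \(\CC_{k}\). Antisymmetry of the innermost bracket lets me assume \(i_{1} > i_{2}\) (the monomial vanishes if \(i_{1} = i_{2}\)). Next, Jacobi gives \([u, X_{a}, X_{b}] - [u, X_{b}, X_{a}] = [u, [X_{a}, X_{b}]]\), and for \(u \in \mr' := [\mr, \mr]\) the right-hand side lies in \([\mr', \mr'] = \mr^{(2)} = 0\); applied repeatedly — taking \(u = [X_{i_{1}}, X_{i_{2}}]\), possibly already bracketed with some later generators, which is still in \(\mr'\) — this shows the entries in positions \(3, \ldots, k\) may be permuted freely, so I may also arrange \(i_{3} \leq \cdots \leq i_{k}\). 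If \(i_{2} \leq i_{3}\) the monomial now lies in \(\CC_{k}\); otherwise \(i_{1} > i_{2} > i_{3}\), and the Jacobi consequence \([X_{a}, X_{b}, X_{c}] = [X_{a}, X_{c}, X_{b}] - [X_{b}, X_{c}, X_{a}]\), applied with \((a, b, c) = (i_{1}, i_{2}, i_{3})\) and the remaining generators carried along, rewrites the monomial as a difference of two left-normed monomials whose new second entry equals \(i_{3}\). Since \(i_{3}\) is \(\leq\) every entry of the new tail — which consists of \(i_{2}, i_{4}, \ldots, i_{k}\), resp.\ \(i_{1}, i_{4}, \ldots, i_{k}\), all \(\geq i_{3}\) — re-sorting the tails, free by the previous step, places both terms in \(\CC_{k}\). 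Hence \(\CC_{k}\) spans \((\mr)_{k}\).

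\textbf{Linear independence.} I would detect the \(\CC_{k}\) in an explicit metabelian model. Put \(P := \C[t_{1}, \ldots, t_{r}]\), \(V := \bigoplus_{i = 1}^{r}\C X_{i}\), \(W := \bigoplus_{j = 1}^{r}P y_{j}\), and let \(\widehat{L} := V \oplus W\) with \(V\) and \(W\) abelian and \([X_{i}, f y_{j}] := (t_{i}f)y_{j}\): the semidirect product of \(V\) with the abelian ideal \(W\) along \(X_{i} \mapsto (\text{multiplication by } t_{i})\), hence a metabelian Lie algebra. Since \(\mr\) is free metabelian, \(X_{i} \mapsto X_{i} + y_{i}\) extends to a Lie algebra homomorphism \(\Theta\colon \mr \to \widehat{L}\), and a one-step induction using \([w, X_{m}] = -t_{m}w\) for \(w \in W\) gives, for \(k \geq 2\),
\[
	\Theta\bigl([X_{i_{1}}, X_{i_{2}}, \ldots, X_{i_{k}}]\bigr) = (-1)^{k}\,t_{i_{3}}t_{i_{4}}\cdots t_{i_{k}}\bigl(t_{i_{1}}y_{i_{2}} - t_{i_{2}}y_{i_{1}}\bigr)
\]
(empty product when \(k = 2\)), while \(\Theta(X_{i}) = X_{i} + y_{i}\). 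Grading \(\widehat{L}\) by \(\deg X_{i} = 1\) and \(\deg(t^{\alpha}y_{j}) = |\alpha| + 1\), the set \(\Theta(\CC_{k})\) sits in degree \(k\), so it suffices to fix \(k \geq 2\) and prove \(\Theta(\CC_{k})\) is independent in \(W\) (for \(k = 1\) this is clear from the \(V\)-components). Suppose a finite linear combination \(\sum_{I} c_{I}\, \Theta([X_{i_{1}}, \ldots, X_{i_{k}}])\), over tuples \(I = (i_{1}, \ldots, i_{k})\) with \(i_{1} > i_{2} \leq i_{3} \leq \cdots \leq i_{k}\), vanishes, and let \(j\) be the largest first entry \(i_{1}\) among tuples with \(c_{I} \neq 0\). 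A tuple with \(i_{2} = j\) would force \(i_{3} = \cdots = i_{k} = j\) and then \(i_{1} > j\), which is impossible, so only tuples with \(i_{1} = j\) contribute to the \(y_{j}\)-component; this yields \(\sum_{I :\, i_{1} = j} c_{I}\,t_{i_{2}}t_{i_{3}}\cdots t_{i_{k}} = 0\) up to a nonzero scalar. For such tuples \(i_{2} \leq i_{3} \leq \cdots \leq i_{k}\), so the monomial \(t_{i_{2}}\cdots t_{i_{k}}\) determines the tuple, distinct tuples give distinct monomials, and all those \(c_{I}\) vanish; descending induction on \(j\) completes the argument.

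\textbf{Main obstacle.} The straightening in the spanning part is fiddly but routine — the point is that a single application of the Jacobi-derived relation already lands in \(\CC_{k}\). The genuine content is the independence statement, and the step I expect to be the main obstacle is setting up the model \(\widehat{L}\) and carrying out the monomial bookkeeping inside it; the only alternative I see would be to match \(\#\CC_{k}\) against \(\dim(\mr)_{k}\), which requires the Hilbert series of the free metabelian Lie algebra (equivalently \(\dim(\ff_{r}^{(2)})_{k}\)) and is less elementary than the explicit model.
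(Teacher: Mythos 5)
Your proof is correct. Note first that the paper does not prove this statement at all: it is imported wholesale from Chen's 1951 paper (Theorem 2 there), so what you have written is a self-contained proof of a cited black box rather than a variant of an argument in the text. Your two halves are both sound and are essentially the standard modern route to Chen's theorem. The reduction to the graded pieces \((\mr)_{k}\) of the free metabelian algebra \(\mr = \ff_{r}/\ff_{r}^{(2)}\) is legitimate because \(\ff_{r}^{(2)}\) is a graded ideal and \(\gamma_{c+1}(\mr)\) is the part of degree \(>c\). The spanning argument is the usual straightening: left-normed brackets span, \([u, X_{a}, X_{b}] = [u, X_{b}, X_{a}]\) for \(u \in [\mr,\mr]\) lets you sort positions \(3,\dots,k\), and the identity \([X_{a}, X_{b}, X_{c}] = [X_{a}, X_{c}, X_{b}] - [X_{b}, X_{c}, X_{a}]\) applied once lands both resulting terms in \(\CC_{k}\) after re-sorting their tails — I checked that in each term the new second entry \(i_{3}\) is indeed a minimum of the remaining entries, so no further recursion is needed. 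The independence argument via the split extension \(V \oplus \bigoplus_{j} \C[t_{1},\dots,t_{r}]y_{j}\) is exactly the Lie-algebra Magnus embedding; the Jacobi identity for \(\widehat{L}\), the formula for \(\Theta\) of a left-normed bracket, and the leading-term extraction on the \(y_{j}\)-component all check out. One small slip: your claim that a tuple with \(i_{2}=j\) "would force \(i_{3}=\cdots=i_{k}=j\)" is not true (those entries need only be \(\geq j\)), but the conclusion you want is immediate anyway, since \(i_{2} < i_{1} \leq j\) for every contributing tuple, so \(i_{2}=j\) is outright impossible and only tuples with \(i_{1}=j\) can hit the \(y_{j}\)-component. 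Your alternative of comparing \(\#\CC_{k}\) with \(\dim(\mr)_{k} = (k-1)\binom{k+r-2}{k}\) is precisely how the paper's quoted Corollary 1 of Chen would be used, but as you say it requires that dimension count as an input, so the explicit model is the more self-contained choice.
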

We refer to such a basis as a \emph{Chen basis}.

\begin{prop}[{\cite[Corollary~1]{Chen51}, \cf \cref{theo:DimensionHallBasis}}]
	Let \(r, c \geq 2\) be integers and let \(\CC\) be a Chen basis of \(\mrc\).
For each \(k \in \{2, \ldots, c\}\), the cardinality of \(\CC_{k}\) is equal to \(M(r, k) := (k - 1) \binom{k + r - 2}{k}\).
\end{prop}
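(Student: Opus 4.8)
The plan is to count directly the tuples parametrising \(\CC_{k}\) supplied by \cref{theo:ChenBasis}, and then to recognise the resulting sum as the claimed binomial coefficient via a short hockey-stick computation. Fix \(k \in \{2, \ldots, c\}\) as in the statement (the case \(k = 1\) is not in scope).

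First I would recall that, by \cref{theo:ChenBasis}, the elements of \(\CC_{k}\) are in bijection with the tuples \((i_{1}, i_{2}, \ldots, i_{k}) \in \{1, \ldots, r\}^{k}\) satisfying \(i_{1} > i_{2} \leq i_{3} \leq \ldots \leq i_{k}\). To count these I would stratify by the value \(m := i_{2}\); since \(m < i_{1} \leq r\), we have \(m \in \{1, \ldots, r - 1\}\). For fixed \(m\) there are \(r - m\) admissible values of \(i_{1}\), while the weakly increasing tail \(m = i_{2} \leq i_{3} \leq \ldots \leq i_{k}\) corresponds precisely to a multiset of size \(k - 2\) drawn from the \(r - m + 1\) values \(\{m, m + 1, \ldots, r\}\), of which there are \(\binom{(r - m + 1) + (k - 2) - 1}{k - 2} = \binom{r - m + k - 2}{k - 2}\). (For \(k = 2\) the tail is empty and this binomial is \(1\), as it should be.) Summing over \(m\) and substituting \(j = r - m\) gives
\[
	\size{\CC_{k}} = \sum_{m = 1}^{r - 1} (r - m) \binom{r - m + k - 2}{k - 2} = \sum_{j = 1}^{r - 1} j \binom{j + k - 2}{k - 2}.
\]

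The final step is to evaluate this sum. I would use the elementary identity \(j \binom{j + k - 2}{k - 2} = (k - 1) \binom{j + k - 2}{k - 1}\), which follows either by rewriting both sides with factorials or by combining \((j + k - 1)\binom{j + k - 2}{k - 2} = (k - 1)\binom{j + k - 1}{k - 1}\) with Pascal's rule. This turns the sum into \((k - 1) \sum_{j = 1}^{r - 1} \binom{j + k - 2}{k - 1}\), and the hockey-stick identity \(\sum_{j = 1}^{r - 1} \binom{j + k - 2}{k - 1} = \binom{r + k - 2}{k}\) then yields \(\size{\CC_{k}} = (k - 1)\binom{k + r - 2}{k} = M(r, k)\). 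There is no real obstacle here; the only point demanding care is the bookkeeping in the stratified count — in particular recording that the tail \(i_{3} \leq \ldots \leq i_{k}\) ranges only over multisets with lower bound \(i_{2}\), not over all multisets of \(\{1, \ldots, r\}\) — and checking that the degenerate case \(k = 2\) is subsumed by the same formulas. Alternatively one could bypass the hockey-stick step and verify \(\sum_{j = 1}^{r - 1} j \binom{j + k - 2}{k - 2} = (k - 1)\binom{k + r - 2}{k}\) by induction on \(r\), but the telescoping route above is cleaner.
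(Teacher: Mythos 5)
Your proof is correct. Note first that the paper itself gives no argument for this proposition at all: it is quoted directly from Chen's paper (Corollary~1 of \cite{Chen51}), consistent with the author's stated policy of only proving the metabelian results whose proofs differ substantially from the free nilpotent case. So your contribution is a self-contained elementary verification of a cited fact rather than an alternative to an in-paper proof. The count itself is sound: \cref{theo:ChenBasis} asserts that \(\CC\) is a basis, so the brackets indexed by distinct admissible tuples are distinct, and the cardinality of \(\CC_{k}\) is exactly the number of tuples \((i_{1}, \ldots, i_{k})\) with \(i_{1} > i_{2} \leq i_{3} \leq \ldots \leq i_{k}\). Your stratification by \(i_{2} = m\), the stars-and-bars count \(\binom{r - m + k - 2}{k - 2}\) for the weakly increasing tail bounded below by \(m\), the identity \(j\binom{j + k - 2}{k - 2} = (k - 1)\binom{j + k - 2}{k - 1}\), and the hockey-stick summation all check out, including the degenerate case \(k = 2\); a spot check at \((r, k) = (3, 3)\) gives \(6 + 2 = 8 = 2\binom{4}{3}\) as expected. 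What your argument buys is independence from Chen's original derivation; what it costs is nothing, since it uses only the description of the index set already recorded in \cref{theo:ChenBasis}.
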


A Chen basis can be used to prove the following:
\begin{lemma}[{\Cf \cref{lem:eigenvaluesphii}}]
	Let \(\phi \in \End(M_{r, c})\) and let, for \(i \in \{1, \ldots, c\}\), \(\phi_{i}\) denote the induced endomorphism on \(\gamma_{i}(M_{r, c}) / \gamma_{i + 1}(M_{r, c})\).
Let \(\lambda = (\lambda_{1}, \ldots, \lambda_{r})\) be the eigenvalues of \(\phi_{1}\), where each eigenvalue is listed as many times as its multiplicity.
Then, for each \(k \in \{1, \ldots, c\}\), the eigenvalues of \(\phi_{k}\) are given by
	\begin{equation}	\label{eq:eigenvaluesphikMetabelian}
		\{\lambda_{i_{1}}\lambda_{i_{2}}\cdots \lambda_{i_{k}} \mid i_{1} > i_{2} \leq i_{3} \leq \ldots \leq i_{k}, \forall j \in \{1, \ldots, k\}: i_{j} \in \{1, \ldots, r\}\}.
	\end{equation}
	In this way, each eigenvalue is also listed as many times as its multiplicity.
\end{lemma}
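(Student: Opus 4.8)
The plan is to adapt the proof of \cref{lem:eigenvaluesphii} almost verbatim, replacing the Hall basis of \(\nrc\) by a Chen basis of \(\mrc\). First I would carry out, for \(M_{r, c}\), the Lie-theoretic reductions of \cref{sec:FromReidemeisterToPolynomials}: pass from \(\phi \in \End(M_{r, c})\) to the morphism induced on the associated Lie ring, tensor with \(\C\), identify the result with \(\mrc\), and obtain a Lie algebra morphism \(\psi_{*} \in \End(\mrc)\) such that the induced map \(\psi_{*, k}\) on \(\gamma_{k}(\mrc) / \gamma_{k + 1}(\mrc)\) has the same eigenvalues as \(\phi_{k}\) for every \(k\). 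As in \cref{lem:eigenvaluesphii}, the semisimple part of \(\psi_{*}\) is again a Lie algebra morphism \cite[Corollary~2, p.~135]{Segal83}, so I may assume \(\psi_{*}\) is semisimple.

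Next I would choose a basis of \(\mrc\) consisting of eigenvectors of \(\psi_{*}\), and single out among them \(E_{1}, \ldots, E_{r}\), with respective eigenvalues \(\lambda_{1}, \ldots, \lambda_{r}\), whose images span \(\mrc / \gamma_{2}(\mrc)\). Being a size-\(r\) generating set of a rank-\(r\) free object, \(E_{1}, \ldots, E_{r}\) freely generate \(\mrc\), so \cref{theo:ChenBasis} furnishes a Chen basis \(\CC'\) with \(\CC'_{1} = \{E_{1}, \ldots, E_{r}\}\) (ordered \(E_{1} < \ldots < E_{r}\)). Since a bracket of two eigenvectors of \(\psi_{*}\) is an eigenvector with the product eigenvalue, a one-line induction on \(k\) shows that each \([E_{i_{1}}, \ldots, E_{i_{k}}] \in \CC'_{k}\) is an eigenvector of \(\psi_{*}\) with eigenvalue \(\lambda_{i_{1}} \cdots \lambda_{i_{k}}\). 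As \(\mrc\) is graded with \(k\)-th homogeneous piece \(\gamma_{k}(\mrc) / \gamma_{k + 1}(\mrc)\), the projections of the elements of \(\CC'_{k}\) form a basis of that piece (the Chen-basis analogue of \cref{lem:basisFactorsLCSnrc}); hence the eigenvalues of \(\psi_{*, k}\), and thus of \(\phi_{k}\), are exactly \(\{\lambda_{i_{1}} \cdots \lambda_{i_{k}} \mid i_{1} > i_{2} \leq i_{3} \leq \ldots \leq i_{k}\}\), counted with multiplicity.

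Unlike in \cref{lem:eigenvaluesphii}, where a three-step argument was needed to compare two Hall bases, no further bookkeeping is required here, precisely because \eqref{eq:eigenvaluesphikMetabelian} is already written directly in terms of the \(\lambda_{i}\): the index constraints appearing there are exactly the ones describing \(\CC'_{k}\) in \cref{theo:ChenBasis} under the correspondence \(E_{i} \leftrightarrow \lambda_{i}\), so the multiset obtained above \emph{is} \eqref{eq:eigenvaluesphikMetabelian}. I expect the main obstacle to lie in the very first step: one has to check that \(L^{\C}(M_{r, c}) \cong \mrc\) compatibly with the induced morphisms, i.e. that the extra defining relations coming from \(F(r)^{(2)}\) descend correctly to the associated Lie ring and survive after tensoring with \(\C\). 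This is the only place where the metabelian case genuinely differs from the free nilpotent one; it is routine but should be spelled out.
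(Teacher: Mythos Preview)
Your proposal is correct and matches the paper's intended approach: the paper omits the proof entirely, indicating it is obtained by adapting the proof of \cref{lem:eigenvaluesphii} with a Chen basis in place of a Hall basis, which is precisely what you do. Your observations that the three-step bookkeeping can be dropped and that the only genuinely new point is the identification \(L^{\C}(M_{r, c}) \cong \mrc\) are both apt.
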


\begin{theorem}[\Cf \cref{theo:ReidemeisterNumberSymmetricPolynomialEigenvalues}]\label{theo:ReidemeisterNumberSymmetricPolynomialEigenvaluesMetabelian}
	There exist symmetric polynomials \(f_{r, k}^{(2)} \in \sympolnZ\) (with \(1 \leq k \leq c\)) such that, for any \(\phi \in \End(M_{r, c})\), we have
	\[
		R(\phi) = \inftynormb{\prod_{k = 1}^{c} f_{r, k}^{(2)}(\lambda_{1}, \ldots, \lambda_{r})},
	\]
	where \(\lambda_{1}, \ldots, \lambda_{r}\) are the eigenvalues of \(\phi_{1}\).
\end{theorem}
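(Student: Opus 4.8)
The plan is to mirror the proof of \cref{theo:ReidemeisterNumberSymmetricPolynomialEigenvalues}, with the Chen basis of \cref{theo:ChenBasis} playing the role of the Hall basis. First I would observe that \cref{theo:productFormulaFGTFNilpotent} applies to \(M_{r, c}\) with its lower central series: the group \(M_{r, c}\) is finitely generated and nilpotent of class at most \(c\), and for \(k \leq c\) the factor \(\gamma_{k}(M_{r, c}) / \gamma_{k + 1}(M_{r, c})\) is isomorphic to \(\gamma_{k}(F(r) / F(r)^{(2)}) / \gamma_{k + 1}(F(r) / F(r)^{(2)})\), which by \cref{theo:ChenBasis} and Chen's rank formula is free abelian of rank \(M(r, k)\), hence torsion-free. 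Consequently \(R(\phi) = \prod_{k = 1}^{c} R(\phi_{k})\), and each \(R(\phi_{k}) = \inftynormb{\det(\phi_{k} - \Id)}\) by \cref{lem:ReidemeisterNumberFreeAbelianGroups}, so \(R(\phi_{k})\) depends only on the eigenvalues of \(\phi_{k}\).

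Next I would apply the preceding lemma (the metabelian analogue of \cref{lem:eigenvaluesphii}): if \(\lambda = (\lambda_{1}, \ldots, \lambda_{r})\) are the eigenvalues of \(\phi_{1}\), then the eigenvalues of \(\phi_{k}\), with multiplicity, are the products \(\lambda_{i_{1}} \lambda_{i_{2}} \cdots \lambda_{i_{k}}\) ranging over the index tuples \(i_{1} > i_{2} \leq i_{3} \leq \ldots \leq i_{k}\) in \(\{1, \ldots, r\}\) that parametrise \(\CC_{k}\). Defining
\[
	f_{r, 1}^{(2)}(x_{1}, \ldots, x_{r}) := \prod_{i = 1}^{r}(1 - x_{i}), \qquad f_{r, k}^{(2)}(x_{1}, \ldots, x_{r}) := \prod_{i_{1} > i_{2} \leq i_{3} \leq \ldots \leq i_{k}}(1 - x_{i_{1}} x_{i_{2}} \cdots x_{i_{k}})
\]
for \(2 \leq k \leq c\), we obtain \(R(\phi_{k}) = \inftynormb{f_{r, k}^{(2)}(\lambda_{1}, \ldots, \lambda_{r})}\) and hence the claimed formula \(R(\phi) = \inftynormb{\prod_{k = 1}^{c} f_{r, k}^{(2)}(\lambda_{1}, \ldots, \lambda_{r})}\). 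It remains only to show each \(f_{r, k}^{(2)}\) lies in \(\sympolrZ\).

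For symmetry I would repeat the argument from the discussion preceding \cref{theo:ReidemeisterNumberSymmetricPolynomialEigenvalues}. Any \(\lambda \in \C^{r}\) induces an endomorphism \(\psi\) of \(\mrc\) with \(\psi(X_{i}) := \lambda_{i} X_{i}\), and for this \(\psi\) the lemma identifies the multiset of eigenvalues of \(\psi_{k}\) with \(\{\lambda_{i_{1}} \cdots \lambda_{i_{k}} \mid i_{1} > i_{2} \leq \ldots \leq i_{k}\}\). Being the spectrum of a fixed linear map, this multiset is unchanged if we relabel \(\lambda_{1}, \ldots, \lambda_{r}\) by a permutation \(\sigma \in \Sym{r}\), so it also equals \(\{\lambda_{\sigma(i_{1})} \cdots \lambda_{\sigma(i_{k})} \mid i_{1} > i_{2} \leq \ldots \leq i_{k}\}\). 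Thus the monomial multisets \(\{x_{i_{1}} \cdots x_{i_{k}}\}\) and \(\{x_{\sigma(i_{1})} \cdots x_{\sigma(i_{k})}\}\) take the same values at every point of \(\C^{r}\); since \(\C\) is infinite they coincide as multisets, whence \(f_{r, k}^{(2)}\) is fixed by \(\sigma\), and by arbitrariness of \(\sigma\) it is symmetric. The one point needing a little care — and the only real departure from the \(N_{r, c}\) argument — is precisely this invariance of the Chen-indexed collection of monomials under \(\Sym{r}\): it is not transparent from the defining inequalities \(i_{1} > i_{2} \leq i_{3} \leq \ldots \leq i_{k}\) and is obtained instead from its interpretation as the \(\phi_{k}\)-spectrum, exactly as above.
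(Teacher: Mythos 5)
Your proposal is correct and follows essentially the same route the paper intends: the paper explicitly omits this proof as being analogous to that of \cref{theo:ReidemeisterNumberSymmetricPolynomialEigenvalues}, with the Chen basis replacing the Hall basis, and your argument — product formula, the metabelian eigenvalue lemma, and the spectrum-based symmetry argument for the Chen-indexed monomial multiset — is exactly that analogue. Your remark that the \(\Sym{r}\)-invariance of the Chen monomials is the one point not transparent from the inequalities \(i_{1} > i_{2} \leq \ldots \leq i_{k}\) is well taken, and your resolution via the eigenvalue interpretation matches the paper's treatment of the Hall-basis case.
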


\begin{theorem}[\Cf \cref{theo:ReidemeisterNumbersInTermsOfCoefficientCharPol}]		\label{theo:ReidemeisterNumbersInTermsOfCoefficientCharPolMetabelian}
	There exist polynomials \(\tilde{g}_{r, k}^{(2)} \in \Z[x_{1}, \ldots, x_{r}]\) for \(1 \leq k \leq c\) such that, for any \(\phi \in \End(M_{r, c})\), if \(p_{\phi} = \sum\limits_{i = 0}^{r} a_{i}x^{i}\) is the characteristic polynomial of \(\phi_{1} \in \End(M_{r, c} / \gamma_{2}(M_{r, c}))\) with \(a_{r} = 1\), then
	\[
		R(\phi) = \inftynormb{\prod_{k = 1}^{c} \tilde{g}_{r, k}^{(2)}(a_{0}, \ldots, a_{r - 1})}.
	\]
\end{theorem}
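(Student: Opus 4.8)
The plan is to transcribe the proof of \cref{theo:ReidemeisterNumbersInTermsOfCoefficientCharPol} almost verbatim, swapping each ingredient for its metabelian counterpart. First I would invoke \cref{theo:ReidemeisterNumberSymmetricPolynomialEigenvaluesMetabelian} to write
\[
	R(\phi) = \inftynormb{\prod_{k = 1}^{c} f_{r, k}^{(2)}(\lambda_{1}, \ldots, \lambda_{r})}
\]
for symmetric polynomials \(f_{r, k}^{(2)} \in \sympolrZ\), where \(\lambda_{1}, \ldots, \lambda_{r}\) are the eigenvalues of \(\phi_{1}\) acting on \(M_{r, c} / \gamma_{2}(M_{r, c})\), which is free abelian of rank \(r\).

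Next, by the Fundamental Theorem of Symmetric Polynomials (\cref{theo:FundamentalTheoremSymmetricPolynomials}), each \(f_{r, k}^{(2)}\) can be written uniquely as \(f_{r, k}^{(2)}(x_{1}, \ldots, x_{r}) = g_{r, k}^{(2)}(e_{1}(x_{1}, \ldots, x_{r}), \ldots, e_{r}(x_{1}, \ldots, x_{r}))\) for some \(g_{r, k}^{(2)} \in \polZ{r}\). Since \(\phi_{1}\) is an endomorphism of \(\Z^{r}\) with characteristic polynomial \(p_{\phi} = \sum_{i = 0}^{r} a_{i}x^{i}\), \(a_{r} = 1\), Vieta's relations give \(a_{r - i} = (-1)^{i} e_{i}(\lambda_{1}, \ldots, \lambda_{r})\) for \(i \in \{0, \ldots, r - 1\}\), exactly as in the free nilpotent case. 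Substituting, I would set
\[
	\tilde{g}_{r, k}^{(2)}(x_{1}, \ldots, x_{r}) := g_{r, k}^{(2)}((-1)^{1} x_{r}, (-1)^{2} x_{r - 1}, \ldots, (-1)^{r} x_{1}),
\]
so that \(f_{r, k}^{(2)}(\lambda_{1}, \ldots, \lambda_{r}) = \tilde{g}_{r, k}^{(2)}(a_{0}, \ldots, a_{r - 1})\); taking the product over \(k\) and applying \(\inftynorm{\cdot}\) then yields the claimed formula.

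There is essentially no real obstacle in this particular statement: all the difficulty has already been absorbed into \cref{theo:ReidemeisterNumberSymmetricPolynomialEigenvaluesMetabelian}, whose own proof mirrors that of \cref{lem:eigenvaluesphii} and \cref{theo:ReidemeisterNumberSymmetricPolynomialEigenvalues} but uses the Chen basis of \(\mrc\) from \cref{theo:ChenBasis} in place of a Hall basis of \(\nrc\). The only points worth double-checking are the structural hypotheses needed along the way: that \(M_{r, c}\) is finitely generated torsion-free nilpotent with each \(\gamma_{i}(M_{r, c}) / \gamma_{i + 1}(M_{r, c})\) torsion-free, so that the product formula (\cref{theo:productFormulaFGTFNilpotent}) applies along the lower central series, and that \(\phi_{1}\) is an endomorphism of a free abelian group of finite rank, so that its characteristic polynomial is monic of degree \(r\) with integer coefficients. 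Both are standard facts about free nilpotent metabelian groups, so the argument is a routine transcription and, following the convention announced earlier in this section, the proof may simply be omitted.
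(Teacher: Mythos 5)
Your proposal is correct and matches the paper exactly: the paper states this theorem without proof, following its announced convention of only proving metabelian results that differ substantially from the \(N_{r,c}\) case, and the intended argument is precisely the transcription you describe — \cref{theo:ReidemeisterNumberSymmetricPolynomialEigenvaluesMetabelian}, then \cref{theo:FundamentalTheoremSymmetricPolynomials} and Vieta's relations, with \(\tilde{g}_{r,k}^{(2)}\) defined by the same sign-and-reindexing substitution as in \cref{theo:ReidemeisterNumbersInTermsOfCoefficientCharPol}.
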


Analogously to \(\SpecR(N_{r, c})\), we can fully determine \(\SpecR(M_{r, c})\) by evaluating the expression above in \(a_{i} \in \Z\) for \(i \in \{1, \ldots, r - 1\}\) and \(a_{0} \in \{\pm 1\}\).

Next, we construct \(\CCinf{}\) in a similar fashion as we constructed \(\HHinf{}\).
Let \(\{X_{i}\}_{i \geq 1}\) be an infinite family.
Let \(\CCinf{} := \bigcup_{k = 1}^{\infty} \CCinf{k}\) where
\[
	\CCinf{1} := \{X_{1}, \ldots, X_{r}, \ldots \}
\]
and
\[
		\CC_{k} := \{[X_{i_{1}}, X_{i_{2}}, \ldots, X_{i_{k}}] \mid i_{1} > i_{2} \leq i_{3} \leq \ldots \leq i_{k}, \forall j \in \{1, \ldots, k\}: i_{j} \geq 1\}
\]
for \(k \geq 2\).
We then also define \(\eta: \CCinf{} \to \M\) similarly as before, where \(\M\) is the set of all monomials on the variables \(\{x_{i}\}_{i \geq 1}\): We set \(\eta(X_{i}) := x_{i}\) for \(i \geq 1\) and
\[
	\eta([X_{i_{1}}, X_{i_{2}}, \ldots, X_{i_{k}}]) := x_{i_{1}} \cdots x_{i_{k}}
\]
for all \(k \geq 2\) and \([X_{i_{1}}, X_{i_{2}}, \ldots, X_{i_{k}}] \in \CCinf{k}\).

\begin{theorem}[\Cf \cref{theo:LinkFkandfrk}]
	Let \(k \geq 1\) be arbitrary.
Define
	\[
		F_{k}^{(2)} := \sum_{X \in \CCinf{k}} \eta(X).
	\]
	and
	\[
		f_{k}^{(2)} := \sum_{i = 0}^{\infty} e_{i}[F^{(2)}_{k}]
	\]
	Then, for all integers \(r \geq 2\)
	\[
		\Phi_{r}\left(f_{k}^{(2)}\right) = f_{r, k}^{(2)}.
	\]
\end{theorem}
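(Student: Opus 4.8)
The plan is to follow the proof of \cref{theo:LinkFkandfrk} almost verbatim, with the Hall set $\HHinf{}$ replaced by the system $\CCinf{}$ and Hall bases replaced by Chen bases, and with \cref{lem:eigenvaluesphii} replaced by its metabelian counterpart stated above. First I would fix $r \geq 2$ and observe that the elements of $\CCinf{k}$ involving only the symbols $X_{1}, \ldots, X_{r}$ are exactly the elements of the degree-$k$ piece $\CC_{k}$ of a Chen basis of $\mrc$ (or of the free metabelian Lie algebra of rank $r$, if $k$ exceeds the nilpotency class); this is immediate upon comparing the index conditions $i_{1} > i_{2} \leq i_{3} \leq \ldots \leq i_{k}$ in \cref{theo:ChenBasis} with those in the definition of $\CCinf{k}$. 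Since $\eta([X_{i_{1}}, \ldots, X_{i_{k}}]) = x_{i_{1}} \cdots x_{i_{k}}$, a monomial $\eta(X)$ with $X \in \CCinf{k}$ involves only $x_{1}, \ldots, x_{r}$ precisely when $X \in \CC_{k}$, so by the definition of $\Phi_{r}$ we obtain $\Phi_{r}(F_{k}^{(2)}) = F_{r, k}^{(2)}$, where $F_{r, k}^{(2)} := \sum_{X \in \CC_{k}} \eta(X)$.

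The second step is to verify that $F_{r, k}^{(2)}$ is a symmetric polynomial, which I would do exactly as in the discussion following \cref{lem:eigenvaluesphii}: by the metabelian analogue of that lemma, the multiset of values $\{\ev_{\lambda}(\eta(X)) \mid X \in \CC_{k}\}$ is the multiset of eigenvalues of the induced map $\phi_{k}$ attached to any endomorphism of $\mrc$ whose $\phi_{1}$ has eigenvalues $\lambda = (\lambda_{1}, \ldots, \lambda_{r})$; as this multiset is unchanged when the entries of $\lambda$ are permuted, as every $\lambda \in \C^{r}$ is realised by some such endomorphism, and as $\C$ is infinite, the multiset $\{\eta(X) \mid X \in \CC_{k}\}$ of monomials is $\Sym{r}$-invariant, so that $F_{r, k}^{(2)}$ is a symmetric polynomial. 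Since $r$ was arbitrary, \cref{lem:CharacterisationSymmetricFunctions} now yields $F_{k}^{(2)} \in \Lambda$, and the same argument shows that $\prod_{X \in \CCinf{k}}(1 - \eta(X))$ converges and is symmetric; convergence is routine, each $\eta(X)$ with $X \in \CCinf{k}$ being a monomial of degree $k$, of which only finitely many divide a fixed monomial.

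With $F_{k}^{(2)}$ established to be symmetric, I would invoke \cref{prop:Product1-MonomialsEqualsAlternatingPlethysmElementary} on the family $\{\eta(X)\}_{X \in \CCinf{k}}$ to get $f_{k}^{(2)} = \prod_{X \in \CCinf{k}}(1 - \eta(X))$, and then apply $\Phi_{r}$. As $\Phi_{r}$ annihilates exactly those terms in the expansion of the product that involve some $X_{m}$ with $m > r$ — equivalently, the terms not built solely from elements of $\CC_{k}$ — this gives
\[
	\Phi_{r}\left(f_{k}^{(2)}\right) = \prod_{X \in \CC_{k}}(1 - \eta(X)).
\]
Finally, I would identify the right-hand side with $f_{r, k}^{(2)}$: rerunning the proof of \cref{theo:ReidemeisterNumberSymmetricPolynomialEigenvaluesMetabelian} (that is, \cref{theo:productFormulaFGTFNilpotent} and \cref{lem:ReidemeisterNumberFreeAbelianGroups} applied along the lower central series of $M_{r, c}$, together with the metabelian analogue of \cref{lem:eigenvaluesphii}) shows that the polynomial $f_{r, k}^{(2)}$ appearing there is exactly $\prod_{X \in \CC_{k}}(1 - \eta(X))$, completing the argument.

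I do not expect a genuine obstacle, since the content mirrors that of \cref{theo:LinkFkandfrk}. The one point requiring attention is the bookkeeping that aligns three index sets — $\CCinf{k}$, the Chen basis piece $\CC_{k}$, and the eigenvalue multiset of $\phi_{k}$ from the metabelian version of \cref{lem:eigenvaluesphii} — namely that restricting $\CCinf{k}$ to the first $r$ generators produces precisely $\CC_{k}$ (so that $\Phi_{r}$ behaves as claimed), and that $\CC_{k}$, being an honest basis of $\gamma_{k}(\mrc)/\gamma_{k+1}(\mrc)$ rather than a mere spanning set, genuinely records the eigenvalues of $\phi_{k}$ and hence $f_{r, k}^{(2)}$. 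The remaining verifications (convergence of the infinite product, well-definedness and symmetry of $F_{k}^{(2)}$) are routine degree counts.
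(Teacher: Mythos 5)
Your proposal is correct and is exactly the argument the paper intends: the paper omits this proof precisely because it is the verbatim analogue of the proof of \cref{theo:LinkFkandfrk} with the Hall set replaced by \(\CCinf{}\), Chen bases in place of Hall bases, and the metabelian version of \cref{lem:eigenvaluesphii}, which is what you carry out. Note only that the displayed definition of \(f_{k}^{(2)}\) in the statement is missing the factor \((-1)^{i}\) (a typo, as the later formula \(f_{k}^{(2)} = \sum_{i \geq 0} (-1)^{i} s_{1^{i}}[s_{k-1,1}]\) confirms); you have implicitly and correctly worked with the signed version, which is what \cref{prop:Product1-MonomialsEqualsAlternatingPlethysmElementary} requires.
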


\begin{theorem}
	For each \(n \geq 2\), \(F_{n}^{(2)} = s_{n - 1, 1}\).
\end{theorem}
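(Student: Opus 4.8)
The plan is to prove the identity by a direct, weight-preserving bijection between the combinatorial data defining the two sides. First I would make the left-hand side explicit: by the construction of \(\CCinf{n}\) and of the map \(\eta\),
\[
	F_n^{(2)} = \sum x_{i_1} x_{i_2} \cdots x_{i_n},
\]
the sum running over all tuples \((i_1, i_2, \ldots, i_n)\) of positive integers satisfying \(i_1 > i_2 \leq i_3 \leq \cdots \leq i_n\); distinct such tuples are distinct elements of \(\CCinf{n}\) but may yield the same monomial, which is exactly the source of the multiplicities that appear. In particular \(F_n^{(2)}\) is homogeneous of degree \(n\), and it is a symmetric function by the same reasoning as in the non-metabelian case (the multiset \(\{\eta(X) \mid X \in \CC_n\}\) is invariant under permuting the variables, so every \(\Phi_r(F_n^{(2)})\) is symmetric, whence \(F_n^{(2)} \in \Lambda\) by \cref{lem:CharacterisationSymmetricFunctions}).

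Next I would set up the bijection with semi-standard Young tableaux of shape \((n-1, 1)\). To a tuple \((i_1, \ldots, i_n)\) with \(i_1 > i_2 \leq i_3 \leq \cdots \leq i_n\), associate the filling of the Young diagram of \((n-1, 1)\) whose first row reads \(i_2, i_3, \ldots, i_n\) from left to right and whose single second-row box holds \(i_1\). The chain \(i_2 \leq i_3 \leq \cdots \leq i_n\) says precisely that the first row is weakly increasing, and \(i_1 > i_2\) says precisely that the unique column of height \(2\) is strictly increasing; since these are all the SSYT conditions for the shape \((n-1, 1)\), the map is a bijection onto the set of all SSYTs of that shape, and it sends the monomial \(x_{i_1} \cdots x_{i_n}\) to \(x^{T}\). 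Restricting to tuples whose multiset of indices equals a fixed partition \(\mu\), we conclude that \([x^{\mu}] F_n^{(2)}\) equals the number of SSYTs of shape \((n-1,1)\) and weight \(\mu\), that is, the Kostka number \(K_{(n-1,1)\mu}\).

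Finally, since \(F_n^{(2)}\) is symmetric it is determined by its coefficients at partition weights, so the previous step yields
\[
	F_n^{(2)} = \sum_{\mu \in \Par} K_{(n-1,1)\mu}\, m_{\mu} = s_{n-1,1}
\]
directly from the definition of the Schur function. (For \(n = 2\) this is transparent even without the general argument: \(\CCinf{2} = \{[X_{i_1}, X_{i_2}] \mid i_1 > i_2\}\) gives \(F_2^{(2)} = \sum_{i < j} x_i x_j = e_2 = s_{1, 1}\).) I do not expect a real obstacle here: the argument is essentially bookkeeping, and the only point needing care is the verification that ``first row weakly increasing, unique height-\(2\) column strictly increasing'' is exactly the pair of conditions \(i_2 \leq \cdots \leq i_n\) and \(i_1 > i_2\) — immediate because the shape \((n-1,1)\) has a single column of height greater than one. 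A reader who prefers not to invoke the symmetry of \(F_n^{(2)}\) may instead compute \([x^{\alpha}] F_n^{(2)}\) for every \(\alpha \in A\) by the same bijection and finish via the Bender--Knuth symmetry of Kostka numbers, or alternatively cite the Pieri rule \(s_{n-1,1} = e_1 h_{n-1} - h_n\) and split, in \(e_1 h_{n-1}\), the extra index according to whether it exceeds \(i_2\) or not.
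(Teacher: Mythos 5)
Your proof is correct, but it takes a genuinely different route from the paper. You work directly with the combinatorial definition of \(s_{\lambda}\) given in the paper (\(s_{\lambda} = \sum_{\mu} K_{\lambda\mu} m_{\mu}\)): the tuples \((i_{1},\ldots,i_{n})\) with \(i_{1} > i_{2} \leq i_{3} \leq \cdots \leq i_{n}\) indexing the Chen basis elements of \(\CCinf{n}\) biject with semi-standard Young tableaux of shape \((n-1,1)\), the first row being \(i_{2},\ldots,i_{n}\) and the second-row box being \(i_{1}\); this identifies \([x^{\mu}]F_{n}^{(2)}\) with \(K_{(n-1,1)\mu}\), and symmetry of \(F_{n}^{(2)}\) (or, as you note, Bender--Knuth) finishes the argument. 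The paper instead splits the defining sum according to whether \(i_{1} \leq i_{2}\) or \(i_{1} > i_{2}\), obtaining \(F_{n}^{(2)} = h_{1}h_{n-1} - h_{n}\) in terms of the complete homogeneous symmetric functions, and then invokes the Jacobi--Trudi identity. Your third suggested alternative (Pieri rule plus splitting the extra index) is essentially the paper's computation. What your main argument buys is self-containment: it needs neither the \(h_{n}\) nor Jacobi--Trudi, only the paper's own definition of Schur functions via Kostka numbers, at the price of having to justify the reduction from arbitrary weights to partition weights. The only point to watch is that the symmetry of \(F_{n}^{(2)}\) should not be taken for granted silently; either cite the metabelian analogue of the eigenvalue argument (\cref{theo:ReidemeisterNumberSymmetricPolynomialEigenvaluesMetabelian} and the lemma preceding it) or use the Bender--Knuth involutions as you propose.
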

\begin{proof}	
	We need yet another family of symmetric functions, namely the \emph{complete homogeneous symmetric functions}.
Given \(n \geq 1\), the complete homogeneous symmetric function \(h_{n}\) is defined as
	\[
		h_{n} := \sum_{i_{1} \leq i_{2} \leq i_{3} \leq \ldots \leq i_{n}} x_{i_{1}} \cdots x_{i_{n}}.
	\]
	
	By definition of \(\CC_{n}\),
	\begin{align*}
		F_{n}^{(2)}	&=	\sum_{i_{1} > i_{2} \leq i_{3} \leq \ldots \leq i_{n}} x_{i_{1}} \cdots x_{i_{n}}	\\	
					&=	\sum_{i_{1} \leq i_{2} \leq i_{3} \leq \ldots \leq i_{n}} x_{i_{1}} \cdots x_{i_{n}}	 - \sum_{\substack{i_{1} \geq 1 \\ i_{2} \leq i_{3} \leq \ldots \leq i_{n}}	} x_{i_{1}} \cdots x_{i_{n}} \\
					&=	h_{n} - h_{1}h_{n - 1}	\\
					&=	s_{n - 1, 1}
	\end{align*}
	by the Jacobi-Trudi identity \cite[7.16.1~Theorem]{Stanley12a}.
\end{proof}

Note that \(F_{1}^{(2)} = s_{1}\).
For ease of notation, we put \(s_{0, 1} := s_{1}\).

Thus, to compute Reidemeister numbers on free \(c\)-step nilpotent and metabelian groups we have to determine the Schur expansion and/or the expansion in elementary symmetric functions of
\[
	f_{k}^{(2)} = \sum_{i = 0}^{\infty} (-1)^{i} s_{1^{i}}[s_{k - 1, 1}].
\]
The corresponding objectives therefore are
\begin{objective}[\Cf \cref{obj:GroupTheoreticalObjectiveNrc}]
	For all \(i, k \geq 1\), determine the expansion of \(s_{1^{i}}[s_{k - 1, 1}]\) in terms of elementary symmetric functions.
\end{objective}

\begin{objective}[\Cf \cref{obj:DiophantineObjectiveNrc}]
	Suppose we know the expansion of \(s_{1^{i}}[s_{k - 1, 1}]\) in terms of elementary symmetric functions for all \(i, k \geq 1\), say, \(s_{1^{i}}[s_{k - 1, 1}] = \sum_{\lambda \in \Par(ik)} c_{\lambda, i, k}^{(2)} e_{\lambda}\) for some \(c_{\lambda, i, k}^{(2)} \in \Z\).
Given integers \(r, c \geq 2\) and a positive integer \(n \geq 1\), determine whether the equation
	\[
		\pm n = \prod_{k = 1}^{c} \sum_{i = 1}^{M(r, k)} (-1)^{i} \sum_{\substack{\lambda \in \Par(ik) \\ \lambda_{1} \leq r}} c_{\lambda, i, k}^{(2)} y_{\lambda}
	\]
	has a solution with \(y_{1}, \ldots, y_{r - 1} \in \Z\) and \(y_{r} \in \{-1, 1\}\).
\end{objective}

\begin{objective}[\Cf \cref{obj:CombinatorialObjectiveNrc}]	\label{obj:CombinatorialObjectiveMrc}
	For all \(i, k \geq 1\), determine the expansion of \(s_{1^{i}}[s_{k - 1, 1}]\) in terms of Schur functions and find a combinatorial interpretation of the coefficients.
\end{objective}

So, we see that determining the Reidemeister spectrum of free nilpotent and metabelian groups is truly related to determining the Schur expansion of the plethysm of Schur functions.

Since nilpotent groups of nilpotency class at most \(3\) are metabelian, \(N_{r, c}\) and \(M_{r, c}\) are equal for \(c \leq 3\).
Therefore, the objectives for \(N_{r, c}\) and \(M_{r, c}\) coincide in that case.
For higher values of \(c\), information about \(\SpecR(M_{r, c})\) can also yield information about \(\SpecR(N_{r, c})\).
For instance, we have the exact sequence
\[
	1 \to N_{r, c}^{(2)} \to N_{r, c} \to M_{r, c} \to 1.
\]
For \(\phi \in \End(N_{r, c})\), let \(\bar{\phi}\) denote the induced endomorphism on \(M_{r, c}\).
A classical result for Reidemeister numbers states that \(R(\phi) \geq R(\bar{\phi})\) (see \cite[Theorem~1.8]{Heath85}).
Hence, if \(1 \in \SpecR(N_{r, c})\), then also \(1 \in \SpecR(M_{r, c})\).
Put differently, if \(\SpecR(M_{r, c})\) does not contain \(1\), then neither does \(\SpecR(N_{r, c})\); in particular, \(\SpecR(N_{r, c})\) cannot be full.

For some of the plethysms \(s_{1^{i}}[s_{k - 1, 1}]\), the Schur expansion is known.
As before, \(s_{1}[s_{k - 1, 1}] = s_{k - 1, 1}\) for all \(k \geq 1\).
J.\;Carbonara, J.\;Remmel and M.\;Yang explicitly compute all plethysms of the form \(s_{1, 1}[s_{a, 1^{b}}]\), where \(a, b \geq 1\) \cite[Theorem~3]{CarbonaraRemmelYang92}.
Concretely, put \(\lambda = (1, 1)\), \(\mu = (k - 1, 1)\) with \(k \geq 2\) and \(s_{1, 1}[s_{\mu}] = \sum_{\nu} a_{\lambda, \mu}^{\nu} s_{\nu}\).
Their results then state the following:
\begin{itemize}
	\item If \(\nu = (2k - m, 1^{m})\) for some \(m \in \{1, \ldots, 2k - 1\}\), then
	\[
		a_{\lambda, \mu}^{\nu} = \begin{cases}
			1	&	\mbox{if } m = 2,	\\
			0	&	\mbox{otherwise.}
		\end{cases}
	\]
	\item If \(\nu = (q, p, 2^{l}, 1^{m})\) for some integers \(q \geq p \geq 2\) and \(l, m\) with \(q + p + 2l + m = 2k\), then
	\[
		a_{\lambda, \mu}^{\nu} = \begin{cases}
			1	&	\mbox{if \(q + p \in \{2k - 2, 2k\}\) and \(\frac{m}{2} + p\) is even,}	\\
			1	&	\mbox{if } q + p = 2k - 1,	\\
			0	&	\mbox{otherwise.}
		\end{cases}
	\]
	\item If \(\nu\) is of a different form, then \(a_{\lambda, \mu}^{\nu} = 0\).
\end{itemize}

On the other hand, C.\;Carr\'{e} and B.\;Leclerc give a combinatorial interpretation of the coefficients in the Schur expansion of \(s_{1, 1}[s_{\lambda}]\), where \(\lambda\) is any partition \cite[Corollary~5.5]{CarreLeclerc95}.
Besides these explicit plethysms, there has been done some research into plethysms of so-called \emph{hook shaped} partitions \cite{LangleyRemmel04}, albeit with only partial Schur expansions.

\section*{Acknowledgments}
The author thanks Lore De Weerdt for her useful remarks and suggestions.

\printbibliography

\end{document}